\def\setliststart#1{\setcounter{\@listctr}{#1}%
  \addtocounter{\@listctr}{-1}}
\newtheorem{The}{Theorem}[section]
\newtheorem{Cor}[The]{Corollary}
\newtheorem{Lem}[The]{Lemma}
\newtheorem{Pro}[The]{Proposition}
\theoremstyle{definition}
\newtheorem{defn}[The]{Definition}
\theoremstyle{remark}
\newtheorem{Rem}[The]{Remark}
\numberwithin{equation}{section}
\newcommand{\R}{\mathbb{R}}
\newcommand{\N}{\mathbb{N}}
\newcommand{\ep}{\varepsilon}
\def\moverlay{\mathpalette\mov@rlay}
\def\mov@rlay#1#2{\leavevmode\vtop{%
   \baselineskip\z@skip \lineskiplimit-\maxdimen
   \ialign{\hfil$\m@th#1##$\hfil\cr#2\crcr}}}
\newcommand{\charfusion}[3][\mathord]{
    #1{\ifx#1\mathop\vphantom{#2}\fi
        \mathpalette\mov@rlay{#2\cr#3}
      }
    \ifx#1\mathop\expandafter\displaylimits\fi}
\newcommand{\SING}{\mbox{\rm Sing}\,(u)}
\title{Local singular characteristics on $\R^2$}
\author{Piermarco Cannarsa \and Wei Cheng}
\address{Dipartimento di Matematica, Universit\`a di Roma ``Tor Vergata'', Via della Ricerca Scientifica 1, 00133 Roma, Italy}
\email{cannarsa@mat.uniroma2.it}
\address{Department of Mathematics, Nanjing University, Nanjing 210093, China}
\email{chengwei@nju.edu.cn}
\date{\today}
\subjclass[2010]{35F21, 49L25, 37J50}
\keywords{Hamilton-Jacobi equation, viscosity solution, singular characteristics}
\begin{document}

\maketitle


\begin{abstract}
The singular set of  a viscosity solution to a Hamilton-Jacobi equation is known to propagate, from any noncritical singular point, along singular characteristics which are curves satisfying certain differential inclusions. In the literature, different notions of singular characteristics were introduced. 
However, a general uniqueness criterion for singular characteristics, not restricted to mechanical systems or problems in one space dimension, is missing at the  moment. In this paper, we prove that, for a Tonelli Hamiltonian on $\R^2$, two different notions of singular characteristics coincide up to a bi-Lipschitz reparameterization. As a significant consequence, we obtain a uniqueness result for the class of singular characteristics that was introduced by Khanin and Sobolevski in the paper [On dynamics of {L}agrangian trajectories for {H}amilton-{J}acobi
  equations.
 {\em Arch. Ration. Mech. Anal.}, 219(2):861--885, 2016]. 
\end{abstract}

\section{Introduction}

This paper is devoted to study the local propagation of singularities for viscosity solutions of the Hamilton-Jacobi equations
\begin{align}
	H(x,Du(x))=0,\quad x\in \R^n,\label{eq:intro_HJs}\tag{HJ$_s$}\\
	H(x,Du(x))=0,\quad x\in \Omega,\label{eq:intro_HJ_local}\tag{HJ$_{\rm loc}$}
\end{align}
where $H$ is a Tonelli Hamiltonian in \eqref{eq:intro_HJs} and $H$ is of class $C^1$ and strictly convex in the $p$-variable in \eqref{eq:intro_HJ_local}. In \eqref{eq:intro_HJs}, we assume that $0$ on the right-hand side is  Ma\~n\'e's critical value. The existence of global weak KAM solutions of \eqref{eq:intro_HJs} was obtained in \cite{Fathi_Maderna2007}. In \eqref{eq:intro_HJ_local}, we suppose $\Omega\subset\R^n$ is a bounded domain.

Semiconcave functions are nonsmooth functions that play an important role in the study of \eqref{eq:intro_HJs} and \eqref{eq:intro_HJ_local}. For semiconcave  viscosity solutions of Hamilton-Jacobi equations, Albano and the first author proved in \cite{Albano_Cannarsa2002} that singular arcs can be selected as generalized characteristics. Recall that a Lipschitz  arc $\mathbf{x}:[0,\tau]\to\R^n$ is called a \emph{generalized characteristic} starting from $x$ for the pair $(H,u)$ if it satisfies the following:
\begin{align}\label{intro:gc}
	\begin{cases}
		\dot{\mathbf{x}}(s)\in\mathrm{co}\,H_p\big(\mathbf{x}(s),D^+u(\mathbf{x}(s))\big)&\quad \text{a.e.}\;s\in[0,\tau],\\
		\dot{\mathbf{x}}(0)=x.&
	\end{cases}
\end{align}
If $x\in\SING$---the singular set of $u$---then \cite[Theorem~5]{Albano_Cannarsa2002}  gives a sufficient condition for the existence of a generalized characteristic propagating the singularity of $u$ locally. 

The local structure of  singular (generalized) characteristics was further investigated  by the first author and Yu in \cite{Cannarsa_Yu2009}, where  
\emph{singular characteristics}  were proved more regular near the starting point than the arcs constructed in  \cite{Albano_Cannarsa2002}. 
Such additional properties will be crucial for the analysis we develop in this paper.

For any weak KAM solution $u$ of \eqref{eq:intro_HJs}, the class of  \emph{intrinsic singular} (generalized) \emph{characteristics} was introduced in \cite{Cannarsa_Cheng3} by the  authors  of this paper, building on properties of the Lax-Oleinik semi-group of positive type. Such a method allowed to construct global singular characteristics,  which we now call {\em intrinsic}. Moreover, in  \cite{Cannarsa_Cheng_Fathi2017} and \cite{Cannarsa_Cheng_Fathi2019} such an ``intrisic approach''  turned out  to be useful for  pointing out topological properties of the cut locus of $u$, including homotopy equivalence to the complement of the Aubry set (see also \cite{CCMW2019} for applications to Dirichlet boundary value problems).

In spite of its success in capturing singular dynamics, it could be argued that the relaxation procedure in the original definition of generalized characteristics---that is, the presence of the convex hull in \eqref{intro:gc}---might cause a loss of information coming from
the Hamiltonian dynamics behind. On the other hand, such a relaxation is necessary to ensure convexity of admissible velocities for the differential inclusion in \eqref{intro:gc},
since the map $x\rightrightarrows H_p(x,D^+u(x))$ fails to be convex-valued, in general. 

The most important example where the above relaxation is unnecessary is probably given by
mechanical Hamiltonians of the form 
$$H(x,p)=\frac 12\langle A(x)p,p\rangle+V(x),$$ 
where $A(x)$ is a symmetric  positive definite $n\times n$-matrix smoothly depending on $x$ and $V(x)$ is a smooth function on $\R^n$. In this case,  a much finer theory has been developed, yielding quantitative tools for the analysis of singular characteristics (\cite{ACNS2013}, \cite{Cannarsa_Mazzola_Sinestrari2015}) and their long time behaviour (\cite{Cannarsa_Chen_Cheng2019}). This is mainly due to the fact that, for a mechanical Hamiltonian, \eqref{intro:gc} reduces to the  \emph{generalized gradient system}
 \begin{align}\label{intro:ms}
	\begin{cases}
		\dot{\mathbf{x}}(t)\in A(\mathbf{x}(t))D^+u(\mathbf{x}(t))&\quad  t>0\quad\text{a.e.}\\
		\dot{\mathbf{x}}(0)=x,&
	\end{cases}
\end{align}
the solutions of which, unique for any  initial datum, form a Lipschitz semi-flow (see, e.g., \cite{Albano_Cannarsa2002}, \cite{ACNS2013}, and \cite{Cannarsa_Cheng_Zhang2014}).
Unfortunately, such a uniqueness property, which for \eqref{intro:ms}  is a simple consequence of the  quasi-dissipativity of the set-valued map $x\rightrightarrows A(x)D^+u(x)$, breaks down for a general Hamiltonian because $x\rightrightarrows H_p(x,D^+u(x))$ is no longer quasi-dissipative (see \cite{Cannarsa_Yu2009} and \cite{Stromberg2011}).

Recent significant progress in the attempt to develop a more restrictive notion of singular characteristics is due to Khanin and Sobolevski (\cite{Khanin_Sobolevski2016}). In this paper, we will call such curves \emph{strict singular characteristic} but in the literature they are also refereed to as  \emph{broken characteristics},   see \cite{Stromberg2013,Stromberg_Ahmadzadeh2014}. We now proceed to recall their definition:
given a semiconcave solution $u$ of  \eqref{eq:intro_HJ_local},  a Lipschitz singular curve $\mathbf{x}:[0,T]\to\Omega$ is called a strict singular characteristic from $x\in\SING$ if
there exists a measurable selection $p(t)\in D^+u(\mathbf{x}(t))$ such that
\begin{equation}\label{eq:intro_sgc}
	\begin{split}
		\begin{cases}
		\dot{\mathbf{x}}(t)=H_p(\mathbf{x}(t),p(t))& a.e.\ t\in[0,T],\\
		\mathbf{x}(0)=x.&
	\end{cases}
	\end{split}
\end{equation}
As already mentioned, the existence of strict singular characteristics for time dependent Hamilton-Jacobi equations  was proved in \cite{Khanin_Sobolevski2016}, where additional regularity properties of such curves were established including  right-differentiability of $\mathbf{x}$ for every $t$, 
 right-conti\-nuity of $\dot{\mathbf{x}}$, and the fact 
that $p(\cdot):[0,T]\to\R^n$ satisfies
\begin{equation}\label{eq:KS_energy}
	H(\mathbf{x}(t),p(t))=\min_{p\in D^+u(\mathbf{x}(t))}H(\mathbf{x}(t),p)\qquad\forall t\in [0,T].
\end{equation}
In  Appendix~A, we give a proof of the existence and regularity of strict characteristics for solutions to \eqref{eq:intro_HJ_local} for the reader's convenience.

In view of the above considerations, it is quite natural to raise the following questions:
\begin{enumerate}[(Q1)]
	\item What is the relation between a strict singular characteristic, $\mathbf{x}$,  and  a singular characteristic, $\mathbf{y}$, from the same initial point?
	\item What kind of uniqueness result can be proved for singular characteristics? What about strict singular characteristics?
\end{enumerate}
In this paper, we will answer the above questions in the two-dimensional case under the following additional conditions:
\begin{enumerate}[(A)]
    \item $n=2$ and $\mathbf{y}$ is Lipschitz;
	\item the initial point $x=\mathbf{y}(0)$ of the singular characteristic $\mathbf{y}$ is not a critical point with respect the pair $(H,u)$, i.e., $0\not\in H_p(x_0,D^+u(x))$;
	\item $\mathbf{y}$ is right differentiable at $0$ and
	\begin{align*}
		\dot{\mathbf{y}}^+(0)=H(x_0,p_0),
	\end{align*}
	where $p_0=\arg\min\{H(x_0,p): p\in D^+u(x)\}$;
	\item $\lim_{t\to0^+}\operatorname*{ess\ sup}_{s\in[0,t]}|\dot{\mathbf{y}}(s)-\dot{\mathbf{y}}^+(0)|=0$.
\end{enumerate}
Notice that any strict singular characteristic $\mathbf{x}$ and the singular characteristic $\mathbf{y}$ given in \cite{Cannarsa_Yu2009} (see also Proposition \ref{pro:gc_local})  satisfy conditions (A)-(D). The intrinsic singular characteristic $\mathbf{z}$ constructed in \cite{Cannarsa_Cheng3} (see also Proposition \ref{pro:y}) satisfies just conditions (A)-(C), in general.

\smallskip
The main results of this paper can be described as follows.
\begin{enumerate}[$\bullet$]
	\item For any pair of singular curves $\mathbf{x}_1$ and $\mathbf{x}_2$ satisfying condition (A)-(D), there exists $\tau>0$ and a bi-Lipschitz homeomorphism $\phi:[0,\tau]\to[0,\phi(\tau)]$ such that, $\mathbf{x}_1(\phi(t))=\mathbf{x}_2(t)$ for all $t\in[0,\tau]$. In other words, the singular characteristic starting from a noncritical point $x$ is unique up to a bi-Lipschitz reparameterization (Theorem \ref{thm:reparametrization}).
    \item In particular, if $\mathbf{x}$ is a strict singular characteristic and $\mathbf{y}$ is a singular characteristic starting from the same noncritical initial point $x$, then there exists $\tau>0$ and a bi-Lipschitz homeomorphism $\phi:[0,\tau]\to[0,\phi(\tau)]$ such that $\mathbf{y}(\phi(t))=\mathbf{x}(t)$ for all $t\in[0,\tau]$ (Corollary \ref{Cor:reparamatrization}).
    \item We have the following \emph{uniqueness} property for strict singular characteristics: let 
    $$\mathbf{x}_j:[0,T]\to\Omega\qquad (j=1,2)$$ be strict singular characteristics from the same noncritical initial point $x$. Then there exists $\tau\in(0, T]$ such that $\mathbf{x}_1(t)=\mathbf{x}_2(t)$ for all $t\in[0,\tau]$. (Theorem \ref{the:strict})
\end{enumerate}

Finally, we remark that the results of this paper cannot be applied  to intrinsic singular characteristics because of the mentioned lack of condition (D). Extra techniques will have to be developed to cover such an important class of singular arcs. 

The paper is organized as follows. In section 2, we introduce necessary material on Hamilton-Jacobi equations, semiconcavity, and singular characteristics. In section 3, we answer question (Q1)-(Q2) in the two-dimensional case. In the appendix, we give a  full proof of the existence of strict singular characteristics. 

\medskip

\noindent\textbf{Acknowledgements.} Piermarco Cannarsa was supported in part by the National Group for Mathematical Analysis, Probability and Applications (GNAMPA) of the Italian Istituto Nazionale di Alta Matematica ``Francesco Severi'' and by Excellence Department Project awarded to the Department of Mathematics, University of Rome Tor Vergata, CUP E83C18000100006. Wei Cheng is partly supported by National Natural Science Foundation of China (Grant No. 11871267, 11631006 and 11790272). The authors also appreciate the cloud meeting software Zoom for the help to finish this paper in this difficult time of coronavirus.

\section{Hamilton-Jacobi equation and semiconcavity}

In this section, we review some basic facts on semiconcave functions and Hamilton-Jacobi equations.

\subsection{Semiconcave function}

Let $\Omega\subset\R^n$ be a convex open set. We recall that  a function $u:\Omega\to\R$ is {\em semiconcave} (with linear modulus) if there exists a constant $C>0$ such that
\begin{equation}\label{eq:SCC}
\lambda u(x)+(1-\lambda)u(y)-u(\lambda x+(1-\lambda)y)\leqslant\frac C2\lambda(1-\lambda)|x-y|^2
\end{equation}
for any $x,y\in\Omega$ and $\lambda\in[0,1]$. 

Let $u:\Omega\subset\R^n\to\R$ be a continuous function. For any $x\in\Omega$, the closed convex sets
\begin{align*}
D^-u(x)&=\left\{p\in\R^n:\liminf_{y\to x}\frac{u(y)-u(x)-\langle p,y-x\rangle}{|y-x|}\geqslant 0\right\},\\
D^+u(x)&=\left\{p\in\R^n:\limsup_{y\to x}\frac{u(y)-u(x)-\langle p,y-x\rangle}{|y-x|}\leqslant 0\right\}.
\end{align*}
are called the {\em subdifferential} and {\em superdifferential} of $u$ at $x$, respectively.

The following characterization of semiconcavity (with linear modulus) for a continuous function comes from proximal analysis. 

\begin{Pro}
\label{criterion-Du_semiconcave}
Let $u:\Omega\to\R$ be a continuous function. If there exists a constant $C>0$ such that, for any $x\in\Omega$, there exists $p\in\R^n$ such that
\begin{equation}\label{criterion_for_lin_semiconcave}
u(y)\leqslant u(x)+\langle p,y-x\rangle+\frac C2|y-x|^2,\quad \forall y\in\Omega,
\end{equation}
then $u$ is semiconcave with constant $C$ and $p\in D^+u(x)$.
Conversely,
if $u$ is semiconcave  in $\Omega$ with constant $C$, then \eqref{criterion_for_lin_semiconcave} holds for any $x\in\Omega$ and $p\in D^+u(x)$.
\end{Pro}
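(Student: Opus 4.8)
The plan is to reduce both implications to elementary facts about concave functions on the convex open set $\Omega$, via the substitution $v:=u-\tfrac C2|\cdot|^2$: condition \eqref{eq:SCC} with constant $C$ is equivalent to concavity of $v$, while condition \eqref{criterion_for_lin_semiconcave} is, after the same translation, the statement that $p-Cx$ is a supergradient of $v$ at $x$.

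For the sufficiency part I would argue \eqref{eq:SCC} directly. Fix $x,y\in\Omega$ and $\lambda\in[0,1]$, put $x_\lambda:=\lambda x+(1-\lambda)y\in\Omega$, and let $p=p(x_\lambda)$ be as in the hypothesis, so that $u(z)\leqslant u(x_\lambda)+\langle p,z-x_\lambda\rangle+\tfrac C2|z-x_\lambda|^2$ for every $z\in\Omega$. Evaluate this inequality at $z=x$ and at $z=y$, multiply the two resulting inequalities by $\lambda$ and $1-\lambda$ respectively, and add. Since $x-x_\lambda=(1-\lambda)(x-y)$ and $y-x_\lambda=\lambda(y-x)$, the linear terms cancel (their sum is $\langle p,\lambda x+(1-\lambda)y-x_\lambda\rangle=0$) and the quadratic terms add up to $\tfrac C2\big(\lambda(1-\lambda)^2+(1-\lambda)\lambda^2\big)|x-y|^2=\tfrac C2\lambda(1-\lambda)|x-y|^2$, which is precisely \eqref{eq:SCC}. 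That $p\in D^+u(x)$ is immediate from \eqref{criterion_for_lin_semiconcave}, since $\big(u(y)-u(x)-\langle p,y-x\rangle\big)/|y-x|\leqslant\tfrac C2|y-x|\to0$ as $y\to x$.

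For the necessity part I would first observe that \eqref{eq:SCC}, together with the algebraic identity $\lambda|x|^2+(1-\lambda)|y|^2-|\lambda x+(1-\lambda)y|^2=\lambda(1-\lambda)|x-y|^2$, says exactly that $v:=u-\tfrac C2|\cdot|^2$ is concave on $\Omega$. Fix now $x\in\Omega$ and $p\in D^+u(x)$; expanding $\tfrac C2(|y|^2-|x|^2)=\tfrac C2|y-x|^2+C\langle x,y-x\rangle$ one checks in one line that $q:=p-Cx$ satisfies $\limsup_{y\to x}\big(v(y)-v(x)-\langle q,y-x\rangle\big)/|y-x|\leqslant0$. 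Finally, for arbitrary $z\in\Omega$ restrict $v$ to the segment $[x,z]\subset\Omega$: the function $g(t):=v\big((1-t)x+tz\big)$ is concave on $[0,1]$, so its difference quotients $\big(g(t)-g(0)\big)/t$ are nonincreasing in $t$, whence the right derivative $g'_+(0):=\lim_{t\to0^+}\big(g(t)-g(0)\big)/t$ exists in $(-\infty,+\infty]$ and satisfies $g'_+(0)\geqslant g(1)-g(0)=v(z)-v(x)$; on the other hand, writing $y_t=(1-t)x+tz$ and using the supergradient bound on $v$ at $x$ one gets $g'_+(0)\leqslant\langle q,z-x\rangle$. Combining, $v(z)\leqslant v(x)+\langle q,z-x\rangle$ for every $z\in\Omega$, and undoing the substitutions $v=u-\tfrac C2|\cdot|^2$, $q=p-Cx$ converts this back into \eqref{criterion_for_lin_semiconcave}.

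The whole argument is routine; the only slightly delicate point is the last step of the converse, namely upgrading the pointwise (infinitesimal) superdifferential information on $v$ at the single point $x$ to the global affine upper bound $v(\cdot)\leqslant v(x)+\langle q,\cdot-x\rangle$. This is exactly where concavity of $v$ — and not merely semiconcavity of $u$ — is used, through the monotonicity of difference quotients of a concave function along line segments. Alternatively, once the reduction $v=u-\tfrac C2|\cdot|^2$ is in place, both halves follow at once from the classical support-hyperplane characterization of concave functions, so one could simply appeal to standard references on semiconcave functions; I have sketched a self-contained proof above.
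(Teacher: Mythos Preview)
Your proof is correct and self-contained. Note, however, that the paper does not actually supply its own proof of this proposition: it is stated as a known fact ``from proximal analysis'' and left without argument, as is common for background material of this kind (the standard reference being the monograph \cite{Cannarsa_Sinestrari_book}). So there is no paper proof to compare against.

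That said, your argument is exactly the standard one and would be perfectly at home in such a reference. The reduction $v=u-\tfrac{C}{2}|\cdot|^2$ is the canonical way to translate semiconcavity into concavity, and the two halves you give --- the direct midpoint/convex-combination computation for sufficiency, and the monotone-difference-quotient argument along segments for necessity --- are precisely the steps one finds in textbook treatments. Your identification of the only nontrivial point (upgrading the infinitesimal superdifferential condition on $v$ at $x$ to a global affine upper bound, via concavity) is accurate; everything else is indeed routine algebra.

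One microscopic remark: in the sufficiency half you apply the hypothesis at the point $x_\lambda$ (to get semiconcavity) and then separately at the point $x$ (to get $p\in D^+u(x)$); this is fine, but it may be worth saying explicitly that the $p$ in the second conclusion is the one attached to $x$, not to $x_\lambda$, since you reused the letter.
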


Let $u:\Omega\to\R$ be locally Lipschitz. We recall that a vector $p\in\R^n$ is called a {\em reachable} (or {\em limiting}) {\em gradient}  of $u$ at $x$ if there exists a sequence $\{x_n\}\subset\Omega\setminus\{x\}$ such that $u$ is differentiable at $x_k$ for each $k\in\N$, and
$$
\lim_{k\to\infty}x_k=x\quad\text{and}\quad \lim_{k\to\infty}Du(x_k)=p.
$$
The set of all reachable gradients of $u$ at $x$ is denoted by $D^{\ast}u(x)$.

The following proposition concerns fundamental properties of semiconcave funtions and their gradients (see \cite{Cannarsa_Sinestrari_book} for the proof).
\begin{Pro}\label{basic_facts_of_superdifferential}
Let $u:\Omega\subset\R^n\to\R$ be a semiconcave function and let $x\in\Omega$. Then the following properties hold.
\begin{enumerate}[\rm {(}a{)}]
  \item $D^+u(x)$ is a nonempty compact convex set in $\R^n$ and $D^{\ast}u(x)\subset\partial D^+u(x)$, where  $\partial D^+u(x)$ denotes the topological boundary of $D^+u(x)$.
  \item The set-valued function $x\rightsquigarrow D^+u(x)$ is upper semicontinuous.
  \item If $D^+u(x)$ is a singleton, then $u$ is differentiable at $x$. Moreover, if $D^+u(x)$ is a singleton for every point in $\Omega$, then $u\in C^1(\Omega)$.
  \item $D^+u(x)=\mathrm{co}\, D^{\ast}u(x)$.
  \item If $u$ is both semiconcave and semiconvex in $\Omega$, then $u\in C^{1,1}(\Omega)$.
\end{enumerate}
\end{Pro}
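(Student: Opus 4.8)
The plan is to derive all five assertions from the proximal characterization of semiconcavity in Proposition~\ref{criterion-Du_semiconcave}, from the elementary fact that a semiconcave function is locally Lipschitz (so that $D^+u$, $D^-u$ and $D^{\ast}u$ are uniformly bounded on compact subsets of the convex open set $\Omega$), and from Rademacher's theorem; this essentially recovers the arguments of \cite{Cannarsa_Sinestrari_book}. I would prove the parts in the order (a), (b), (c), (d), (e), each using the previous ones.

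For (a), nonemptiness of $D^+u(x)$ is immediate from Proposition~\ref{criterion-Du_semiconcave}, boundedness comes from the local Lipschitz bound, and closedness and convexity follow directly from the definition since $\limsup$ is subadditive. To get $D^{\ast}u(x)\subseteq D^+u(x)$, pass to the limit in $u(y)\le u(x_k)+\langle Du(x_k),y-x_k\rangle+\frac C2|y-x_k|^2$ along a sequence $x_k\to x$ of differentiability points with $Du(x_k)\to p$; the same limiting argument, now with arbitrary $p_k\in D^+u(x_k)$ in place of $Du(x_k)$, gives the closed-graph property, which together with local boundedness yields the upper semicontinuity in (b). Finally, if such a $p\in D^{\ast}u(x)$ were interior to $D^+u(x)$, then $p-\delta(y-x)/|y-x|\in D^+u(x)$ for some $\delta>0$, whence $u(y)-u(x)-\langle p,y-x\rangle\le-\delta|y-x|+\frac C2|y-x|^2$; but testing $u(y)\le u(x_k)+\langle Du(x_k),y-x_k\rangle+\frac C2|y-x_k|^2$ at $y=x$ gives the reverse estimate $u(x_k)-u(x)-\langle p,x_k-x\rangle\ge-(|Du(x_k)-p|+\frac C2|x_k-x|)|x_k-x|$, and the two are incompatible for $k$ large, so $D^{\ast}u(x)\subseteq\partial D^+u(x)$.

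For (c), suppose $D^+u(x)=\{p\}$ but $u$ is not differentiable at $x$. Since $\limsup_{y\to x}(u(y)-u(x)-\langle p,y-x\rangle)/|y-x|\le 0$ by semiconcavity, there are $y_k\to x$ and $\alpha>0$ with $(u(y_k)-u(x))/|y_k-x|\le\langle p,e_k\rangle-\alpha$, where $e_k=(y_k-x)/|y_k-x|$. The restriction of $u$ to the segment from $x$ to $y_k$ is Lipschitz, so by the mean value theorem it is differentiable at some $z_k$ on that segment with slope $\le\langle p,e_k\rangle-\alpha$; for a semiconcave function this forces $\langle q,e_k\rangle\le\langle p,e_k\rangle-\alpha$ for every $q\in D^+u(z_k)$. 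Since $z_k\to x$, choosing $q_k\in D^+u(z_k)$ and passing to subsequences with $q_k\to q$ and $e_k\to e$, the upper semicontinuity in (b) gives $q\in D^+u(x)=\{p\}$, hence $\langle p,e\rangle\le\langle p,e\rangle-\alpha$, a contradiction; the ``moreover'' clause then follows since a singleton-valued upper semicontinuous map is continuous. For (d), the inclusion $\mathrm{co}\,D^{\ast}u(x)\subseteq D^+u(x)$ is a consequence of $D^{\ast}u(x)\subseteq D^+u(x)$ and the convexity of $D^+u(x)$, while $D^{\ast}u(x)\ne\emptyset$ by Rademacher's theorem and local boundedness. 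The reverse inclusion is the delicate point, and I expect it to be the main obstacle: by Straszewicz's theorem $D^+u(x)$ is the closed convex hull of its exposed points, so it suffices to show that every exposed point $p$ of $D^+u(x)$ belongs to $D^{\ast}u(x)$; if $v$ exposes $p$, then $p$ is the unique minimizer of $q\mapsto\langle q,-v\rangle$ over $D^+u(x)$ and $\lim_{t\to0^+}(u(x-tv)-u(x))/t=-\langle p,v\rangle$, and one must approach $x$ along the ray $x-tv$ and extract, using the density of differentiability points together with the upper semicontinuity from (b), a sequence $z_k\to x$ with $Du(z_k)\to p'\in D^+u(x)$ for which exposedness and the semiconcave control of one-sided directional derivatives force $p'=p$. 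Density of differentiability points alone does not suffice here, since their gradients must be steered to the prescribed value, and this is exactly where exposedness (rather than mere extremality) is used in an essential way, as in \cite{Cannarsa_Sinestrari_book}.

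Finally, for (e): if $u$ is also semiconvex, then $-u$ is semiconcave, so $D^-u(x)=-D^+(-u)(x)\ne\emptyset$ by (a); since $D^+u(x)$ and $D^-u(x)$ are both nonempty, any $p\in D^+u(x)$ and $q\in D^-u(x)$ must coincide (compare the two quadratic inequalities along a ray $h=th_0$ and let $t\to0^+$), so $u$ is differentiable on $\Omega$ and $Du$ is a singleton-valued upper semicontinuous, hence continuous, map. To upgrade to $C^{1,1}$, note that semiconcavity with constant $C$ and semiconvexity with constant $C'$ together give $|u(y)-u(x)-\langle Du(x),y-x\rangle|\le\frac M2|y-x|^2$ for all $x,y\in\Omega$ with $M=\max\{C,C'\}$; writing this bound at the base points $x$ and $x+h$ for a displacement of the form $te$ with $e$ an arbitrary unit vector and optimizing the resulting inequality over $t>0$ yields $\langle Du(x+h)-Du(x),e\rangle\le 3M|h|$ for every $e$, i.e. $Du$ is Lipschitz.
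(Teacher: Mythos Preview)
The paper does not give a proof of this proposition at all: it merely states the result and refers the reader to \cite{Cannarsa_Sinestrari_book}. Your proposal, by contrast, supplies an actual argument, and it is essentially the standard one from that reference, carried out correctly; in particular your treatment of (a), (b), (c), and (e) is sound (the $3M$ bound in (e) via the auxiliary displacement $te$ and optimization in $t$ is exactly the classical trick), and you rightly flag the reverse inclusion in (d) as the only genuinely delicate step, sketching the exposed-point approach that is used in \cite{Cannarsa_Sinestrari_book}.
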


\begin{defn}
Let $u:\Omega\to\R$ be a semiconcave function. $x\in\Omega$ is called a \emph{singular point} of $u$ if $D^+u(x)$ is not a singleton. The set of all singular points of $u$ is denoted by $\SING$.
\end{defn}

\begin{defn}
Let $k\in\{0,1,\dots,n\}$ and let $C\subset\R^n$. $C$ is called a \emph{$k$-rectifiable} set if there exists a Lipschitz continuous function $f:\R^k\to\R^n$ such that $C\subset f(\R^k)$.  $C$ is called a \emph{countably $k$-rectifiable} set if it is the union of a countable family of $k$-rectifiable sets.
\end{defn}

Let us recall a result on the rectifiability of the singular set $\SING$ of a semiconcave function $u$ in dimension two.

\begin{Pro}[\cite{Cannarsa_Sinestrari_book}]\label{pro:rectifiability}
Let $\Omega\subset\R^2$ be an open domain, $u:\Omega\to\R$ be a semiconcave function, and set 
\begin{align*}
	\mbox{\rm Sing}_k(u)=\{x\in\SING: \dim(D^+u(x))=k\},\quad k=0,1,2.
\end{align*}
Then $\mbox{\rm Sing}_k(u)$ is countably $(2-k)$-rectifiable for $k=0,1,2$. In particular, $\mbox{\rm Sing}_2(u)$ is countable.
\end{Pro}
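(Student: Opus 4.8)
The plan is to reduce, by localization and subtraction of a paraboloid, to the case of a finite convex function, and then to treat the three values of $k$ separately, with $k=1$ the only substantive case. First I would cover $\Omega$ by countably many bounded open convex sets $U_j$ on each of which $u$ is Lipschitz and semiconcave with some constant $C_j$. On $U_j$ the function $w_j:=\tfrac{C_j}{2}|\cdot|^2-u$ is convex and Lipschitz, so that $u=\tfrac{C_j}{2}|\cdot|^2-w_j$ with $-w_j$ concave; hence $D^+u(x)=C_jx-\partial w_j(x)$ on $U_j$, an affine bijection between $\partial w_j(x)$ and $D^+u(x)$, so $\dim D^+u(x)=\dim\partial w_j(x)$ and $\mathrm{Sing}_k(u)\cap U_j=\{x\in U_j:\dim\partial w_j(x)=k\}$. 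Since a countable union of countably $(2-k)$-rectifiable sets is countably $(2-k)$-rectifiable, it suffices to show that for a finite convex $w$, Lipschitz with constant $L$ on a bounded convex $U\subset\R^2$, the set $\Sigma_k(w):=\{x\in U:\dim\partial w(x)=k\}$ is countably $(2-k)$-rectifiable. The only property of $w$ I will use is monotonicity of the subdifferential, $\langle p-q,x-y\rangle\geqslant0$ whenever $p\in\partial w(x)$, $q\in\partial w(y)$, together with $\partial w(x)\subset\overline B(0,L)$.

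If $\dim\partial w(x)=0$ then $\partial w(x)$ is a singleton, $w$ is differentiable at $x$, and $x\notin\SING$, so $\Sigma_0(w)=\emptyset$. If $x\in\Sigma_2(w)$, then $\partial w(x)$ has nonempty interior, hence positive Lebesgue measure; testing monotonicity with $p=q+t(x-y)/|x-y|$ for small $t>0$ shows that no point can lie in $\mathrm{int}\,\partial w(x)\cap\mathrm{int}\,\partial w(y)$ when $x\neq y$, so $\{\mathrm{int}\,\partial w(x):x\in\Sigma_2(w)\}$ is a family of pairwise disjoint subsets of $\overline B(0,L)$ of positive measure. As the plane admits only countably many such sets, $\Sigma_2(w)$ is countable, i.e.\ countably $0$-rectifiable; in particular $\mathrm{Sing}_2(u)$ is countable.

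For the main case $k=1$, write $\partial w(x)=[p^-(x),p^+(x)]$ for $x\in\Sigma_1(w)$ and set $\ell(x):=|p^+(x)-p^-(x)|>0$, $\nu(x):=(p^+(x)-p^-(x))/\ell(x)$, $m(x):=\tfrac12(p^-(x)+p^+(x))$. Fix a small $\delta>0$ and a finite partition of the unit circle into arcs $I_i$ of length $<\delta$ with centres $\omega_i$; for each $m\in\N$ fix a grid of cubes $\{Q^{(m)}_l\}_l$ of mesh $\eta_m$ covering $\overline B(0,L)$, with $\eta_m$ to be chosen. Decompose $\Sigma_1(w)=\bigcup_{i,m,l}E_{i,m,l}$ with $E_{i,m,l}:=\{x:\pm\nu(x)\in I_i,\ \ell(x)\geqslant1/m,\ m(x)\in Q^{(m)}_l\}$. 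For $x,y\in E_{i,m,l}$, applying monotonicity to the four pairs of endpoints $(p^{\pm}(x),p^{\pm}(y))$ and choosing signs optimally gives
\[
\langle m(x)-m(y),x-y\rangle\geqslant\tfrac{\ell(x)}2|\langle\nu(x),x-y\rangle|+\tfrac{\ell(y)}2|\langle\nu(y),x-y\rangle|\geqslant\tfrac1m\big(\cos\delta\,|\langle\omega_i,x-y\rangle|-\sin\delta\,|x-y|\big),
\]
while the left-hand side is $\leqslant|m(x)-m(y)|\,|x-y|\leqslant\eta_m|x-y|$. Taking $\delta$ with $\tan\delta\leqslant\tfrac14$ and then $\eta_m\leqslant\tfrac1{4m}\cos\delta$ forces $|\langle\omega_i,x-y\rangle|\leqslant\tfrac12|x-y|$ for all $x,y\in E_{i,m,l}$; hence the orthogonal projection onto $\R\omega_i^\perp$ is injective on $E_{i,m,l}$ with Lipschitz inverse, so $E_{i,m,l}$ lies on a single Lipschitz curve. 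As there are countably many triples $(i,m,l)$, $\Sigma_1(w)$ is countably $1$-rectifiable, which completes the argument.

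The substantive difficulty is precisely this last case: monotonicity applied to the midpoints $m(\cdot)$ alone yields nothing, because $x\mapsto m(x)$ is merely upper semicontinuous and may oscillate wildly along $\Sigma_1(w)$. The device that rescues the argument is to quantize $m(\cdot)$ as well on a fine grid; once $m(x)$ and $m(y)$ are close, the same monotonicity inequality compels $x-y$ to be nearly orthogonal to the common direction $\omega_i$, which is exactly what realizes the piece as a Lipschitz graph. Alternatively, one can deduce the $k=1$ case from the fact that $\nabla w\in BV_{\mathrm{loc}}(U;\R^2)$, since $\Sigma_1(w)$ is then contained, up to the countable set $\Sigma_2(w)$, in a countably $1$-rectifiable set on which the measure-theoretic derivative of $\nabla w$ concentrates, by the structure theory of $BV$ functions.
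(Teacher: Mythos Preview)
The paper does not give its own proof of this proposition: it is simply quoted from \cite{Cannarsa_Sinestrari_book} as a known structural result, with no argument supplied. So there is nothing in the paper to compare against beyond the citation.

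Your argument is correct and is essentially the classical route that the cited reference takes: localize, pass to a convex function $w$ via $w=\tfrac{C}{2}|\cdot|^2-u$, and exploit monotonicity of $\partial w$. The treatment of $k=0$ and $k=2$ is the standard one (for $k=2$, your disjointness claim is right: if $q\in\mathrm{int}\,\partial w(x)\cap\partial w(y)$ with $x\neq y$, then $q-t(x-y)/|x-y|\in\partial w(x)$ for small $t>0$ and monotonicity gives $-t|x-y|\geqslant0$). For $k=1$, your key inequality
\[
\langle m(x)-m(y),x-y\rangle\geqslant\tfrac{\ell(x)}{2}\,|\langle\nu(x),x-y\rangle|+\tfrac{\ell(y)}{2}\,|\langle\nu(y),x-y\rangle|
\]
follows exactly as you say, by applying monotonicity to each of the four endpoint pairs and maximizing over signs; combined with the quantization of $\nu$ and of $m$, it forces each piece $E_{i,m,l}$ to be a Lipschitz graph over $\R\omega_i^\perp$, hence $1$-rectifiable. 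Two cosmetic points: the diameter of a mesh-$\eta_m$ square is $\sqrt{2}\,\eta_m$, so either take cubes of diameter $\eta_m$ or absorb the $\sqrt{2}$ into the choice of $\eta_m$; and once you have $|\langle\omega_i,x-y\rangle|\leqslant\tfrac12|x-y|$, Kirszbraun (or just linear interpolation in one variable) turns the Lipschitz graph over a subset of $\R\omega_i^\perp$ into a full Lipschitz curve. The closing remark about the $BV$ alternative is also valid and corresponds to the Alberti--Ambrosio viewpoint.
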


\subsection{Aspects of weak KAM theory}

For any $x,y\in\R^n$ and $t>0$, we denote by $\Gamma^t_{x,y}$ the set of all absolutely continuous curves $\xi$ defined on $[0,t]$ such that $\xi(0)=x$ and $\xi(t)=y$. Define
\begin{equation}\label{eq:def_f_s}
	A_t(x,y)=\inf_{\xi\in\Gamma^t_{x,y}}\int^t_0L(\xi(s),\dot{\xi}(s))\ ds,\quad x,y\in\R^n,\ t>0.
\end{equation}
We call $A_t(x,y)$ the \emph{fundamental solution} for the Hamilton-Jacobi equation
\begin{align*}
	D_tu(t,x)+H(x_0,D_xu(t,x))=0,\quad t>0, x\in\R^n.
\end{align*}
By  classical results (Tonelli's theory),  the infimum in \eqref{eq:def_f_s} is a minimum. Each curve $\xi\in\Gamma^t_{x,y}$ attaining such a minimum is called a \emph{minimal curve for $A_t(x,y)$}.

\begin{defn}
For each $u:\R^n\to\R$, let and $T_tu$ and $\breve{T}_tu$ be the \emph{Lax-Oleinik evolution of negative and positive type} defined, respectively, by 
\begin{align*}
	\begin{split}
		T_tu(x)=&\,\inf_{y\in\R^n}\{u(y)+A_t(y,x)\},\\
		\breve{T}_tu(x)=&\,\sup_{y\in\R^n}\{u(y)-A_t(x,y)\},
	\end{split}
	\qquad (x\in\R^n, t>0).
\end{align*}
\end{defn}
The following result is well-known.
\begin{Pro}[\cite{Fathi_Maderna2007}]\label{pro:weak_KAM}
There exists a  Lipschitz  semiconcave viscosity solution of \eqref{eq:intro_HJs}. Moreover, such a solution $u$ is a common fixed point of the semigroup $\{T_t\}$, i.e., $T_tu=u$ for all $t\geqslant0$.
\end{Pro}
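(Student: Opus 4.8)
The plan is to build the solution through the Lax--Oleinik semigroup of negative type. First I would record the basic properties of the fundamental solution $A_t$: by Tonelli's theory the infimum in \eqref{eq:def_f_s} is attained, $A_t$ is locally Lipschitz on $(0,\infty)\times\R^n\times\R^n$, it obeys the dynamic programming (Markov) identity $A_{t+s}(x,z)=\inf_{y\in\R^n}\{A_t(x,y)+A_s(y,z)\}$, and for each fixed $t>0$ the functions $A_t(\cdot,y)$ and $A_t(x,\cdot)$ are locally semiconcave with linear modulus, with semiconcavity and Lipschitz constants that are uniform as the arguments range over a bounded set (here the superlinearity of $L$ is used to confine the minimizers). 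These facts immediately give that $\{T_t\}_{t\geq0}$ is a semigroup, $T_{t+s}=T_t\circ T_s$, that it is order preserving and commutes with the addition of constants, and that it is non-expansive for the sup-norm, $\|T_tu-T_tv\|_\infty\leq\|u-v\|_\infty$.

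Next I would produce a fixed point of $T_t$. Start from a Lipschitz global subsolution $w_0$ of \eqref{eq:intro_HJs}, which exists precisely because Ma\~n\'e's critical value equals $0$; equivalently $w_0(x)-w_0(y)\leq A_t(y,x)$ for all $x,y$ and $t>0$, so that $T_tw_0\geq w_0$ and, applying the order-preserving maps $T_s$, the map $t\mapsto T_tw_0$ is non-decreasing, with all $T_tw_0$ sharing one Lipschitz constant. If the monotone limit $w_\infty:=\lim_{t\to\infty}T_tw_0$ is finite it is already a Lipschitz fixed point; in general one invokes the Fathi--Maderna construction, which distinguishes weak KAM solutions built from the Aubry set through the Peierls barrier $h(x,y)=\liminf_{t\to\infty}A_t(x,y)$ from ``Busemann-type'' solutions associated with action-minimizing rays escaping to infinity when the Aubry set is empty. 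In every case this yields a Lipschitz function $u:\R^n\to\R$ with $T_tu=u$ for all $t\geq0$. To pass from a fixed point of a single $T_{t_0}$ to a common fixed point of the whole semigroup one may alternatively run an Arzel\`a--Ascoli and diagonal argument along $t_0\to0^+$, using the uniform Lipschitz bound, or use the equivalence with the stationary equation noted below.

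For the regularity and the equation, fix $t_0>0$ and write $u=T_{t_0}u=\inf_{y\in\R^n}\{u(y)+A_{t_0}(y,\cdot)\}$. Since $u$ is Lipschitz, a minimizer realizing the infimum at a point $x$ stays within a bounded neighbourhood of $x$, so the relevant functions $x\mapsto u(y)+A_{t_0}(y,x)$ are semiconcave with one common linear modulus on each bounded set; being an infimum of uniformly semiconcave functions, $u$ is therefore semiconcave (with linear modulus). Finally, $T_tu=u$ for all $t\geq0$ means that $(t,x)\mapsto u(x)$ is the viscosity solution of the evolution equation $D_tw+H(x,D_xw)=0$ with initial datum $u$, which — being stationary — is equivalent to $u$ being a viscosity solution of \eqref{eq:intro_HJs}; the converse implication, that a semiconcave solution of \eqref{eq:intro_HJs} is fixed by every $T_t$, follows from the representation of $T_t$ and the criticality of $0$. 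This establishes all three properties in the statement.

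The main obstacle is the non-compactness of $\R^n$: the monotone-limit recipe that settles the compact case can diverge to $+\infty$, and handling this divergence is the heart of the Fathi--Maderna argument — one must know that, at the critical level $0$, either the Aubry set is non-empty, in which case the Peierls barrier furnishes solutions, or there are action-minimizing rays to infinity, whose Busemann functions furnish solutions. A secondary, more routine, difficulty is verifying the uniformity of the Lipschitz and semiconcavity constants needed to run Arzel\`a--Ascoli and to transfer the local semiconcavity of $A_{t_0}$ to the fixed point $u$.
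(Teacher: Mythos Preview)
The paper does not give its own proof of this proposition: it is stated with the attribution \cite{Fathi_Maderna2007} and no argument is supplied, so there is nothing in the present paper to compare your proposal against. Your sketch is a faithful outline of the Fathi--Maderna construction itself --- the Lax--Oleinik semigroup, the monotone limit from a critical subsolution, and the dichotomy between Peierls-barrier solutions when the Aubry set is nonempty and Busemann-type solutions when it is empty --- and you correctly isolate non-compactness of $\R^n$ as the genuine obstacle. If anything, the passage ``to pass from a fixed point of a single $T_{t_0}$ to a common fixed point of the whole semigroup one may alternatively run an Arzel\`a--Ascoli and diagonal argument along $t_0\to0^+$'' is vaguer than the rest and would need care, but this is peripheral since the main route you describe (via the domination inequality $T_tw_0\geq w_0$ for a critical subsolution) already yields a common fixed point directly.
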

Clearly, \eqref{eq:intro_HJs} has no unique solution and we call each solution, given as a fixed point of the semigroup $\{T_t\}$, a \emph{weak KAM solution} of \eqref{eq:intro_HJs}.

\begin{defn}
Let $u$ be a continuous function on $M$. We say $u$ is \emph{$L$-dominated} if 
\begin{align*}
	u(\xi(b))-u(\xi(a))\leqslant \int^b_aL(\xi(s),\dot{\xi}(s))\ ds,
\end{align*}
for all absolutely continuous curves $\xi:[a,b]\to\R^n\;(a<b)$, with $\xi(a)=x$ and $\xi(b)=y$. We say such an absolutely continuous curve $\xi$ is a \emph{$(u,L)$-calibrated curve}, or a \emph{$u$-calibrated curve} for short, if the equality holds in the inequality above. A curve $\xi:(-\infty,0]\to\R^n$ is called a $u$-calibrated curve if it is $u$-calibrated  on each compact sub-interval of $(-\infty,0]$. In this case, we also say that $\xi$ is a \emph{backward calibrated curve} (with respect to $u$).
\end{defn}

The following result explains the relation between  the set of all reachable gradients and the set of all backward calibrated curves from $x$ (see, e.g., \cite{Cannarsa_Sinestrari_book} or \cite{Rifford2008} for the proof). 

\begin{Pro}\label{reachable_grad_and_backward}
Let $u:\R^n\to\R$ be a weak KAM solution of  \eqref{eq:intro_HJs} and let  $x\in \R^n$. Then $p\in D^{\ast}u(x)$ if and only if there exists a unique $C^2$ curve $\xi:(-\infty,0]\to \R^n$ with $\xi(0)=x$ and $p=L_v(x,\dot{\xi}(0))$, which is a backward calibrated curve with respect to $u$.
\end{Pro}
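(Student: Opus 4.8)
The plan is to prove the two implications separately, exploiting that a weak KAM solution $u$ is a fixed point of \emph{every} map $T_t$, together with Tonelli's regularity theory for action minimizers. We will repeatedly use the following standard facts. (i) $u$ is $L$-dominated, since $u=T_tu$ reads $u(x)\le u(y)+A_t(y,x)$ for all $y$; moreover, for every $x$ and $t>0$ the infimum defining $T_tu(x)$ is attained (superlinearity of $L$ and the Lipschitz bound on $u$). (ii) If $\xi:[a,b]\to\R^n$ is $u$-calibrated, then for any curve $\gamma$ with the same endpoints, $\int_a^bL(\xi,\dot\xi)\,ds=u(\xi(b))-u(\xi(a))=u(\gamma(b))-u(\gamma(a))\le\int_a^bL(\gamma,\dot\gamma)\,ds$; hence $\xi$ is a minimizer, so $\xi\in C^2$ and $\xi$ solves the Euler--Lagrange equation --- equivalently, $s\mapsto(\xi(s),L_v(\xi(s),\dot\xi(s)))$ is an orbit of the Hamiltonian flow. (iii) Splicing the calibrations of $\xi|_{[a,s]}$ and $\xi|_{[s,b]}$ gives $L_v(\xi(s),\dot\xi(s))\in D^+u(\xi(s))\cap D^-u(\xi(s))$ for $s\in(a,b)$, so $u$ is differentiable at $\xi(s)$ with $Du(\xi(s))=L_v(\xi(s),\dot\xi(s))$ and $\dot\xi(s)=H_p(\xi(s),Du(\xi(s)))$. (iv) $u$-calibrated curves have velocities bounded by a constant depending only on $H$ (a priori compactness).

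\emph{Sufficiency ($\Leftarrow$).} Let $\xi:(-\infty,0]\to\R^n$ be a $C^2$ backward calibrated curve with $\xi(0)=x$ and set $p=L_v(x,\dot\xi(0))$. By (iii), $u$ is differentiable at $\xi(s)$ with $Du(\xi(s))=L_v(\xi(s),\dot\xi(s))$ for every $s<0$. If $\xi$ is not the constant curve, then $\xi(s)\ne x$ for all $s$ in some interval $(-\delta,0)$; choosing $s_k\uparrow0$ there and using continuity of $L_v$ and of $\dot\xi$ we obtain $\xi(s_k)\to x$, $\xi(s_k)\ne x$, and $Du(\xi(s_k))=L_v(\xi(s_k),\dot\xi(s_k))\to L_v(x,\dot\xi(0))=p$, hence $p\in D^{\ast}u(x)$. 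The remaining case $\xi\equiv x$ --- possible only when $(x,0)$ is an equilibrium of the Euler--Lagrange flow and $x$ lies in the projected Aubry set --- requires a separate direct verification that $L_v(x,0)\in D^{\ast}u(x)$.

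\emph{Necessity ($\Rightarrow$), existence.} Let $p\in D^{\ast}u(x)$ and fix $x_k\to x$, $x_k\ne x$, with $u$ differentiable at $x_k$ and $Du(x_k)\to p$. For each $m\in\N$, since $u=T_mu$ and the infimum is attained, a minimal curve realizing $T_mu(x_k)$, reparametrized on $[-m,0]$, is $u$-calibrated and ends at $x_k$; by the velocity bound (iv), Arzel\`a--Ascoli, lower semicontinuity of the action, and a diagonal argument over $m$, we obtain a backward calibrated curve $\xi_k:(-\infty,0]\to\R^n$ with $\xi_k(0)=x_k$. Since $u$ is differentiable at $x_k$, (ii)--(iii) force $L_v(x_k,\dot\xi_k(0))=Du(x_k)$, so $\dot\xi_k(0)=H_p(x_k,Du(x_k))$ and $\xi_k$ is the projection of the Hamiltonian orbit issued from $(x_k,Du(x_k))$. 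Because $(x_k,Du(x_k))\to(x,p)$ while all these orbits share the uniform velocity bound (so they remain in a fixed compact set on each finite interval and cannot blow up), continuous dependence on initial data yields $\xi_k\to\xi$ in $C^1$ uniformly on compact subsets of $(-\infty,0]$, where $\xi$ is the $C^2$ Euler--Lagrange extremal with $(\xi(0),\dot\xi(0))=(x,H_p(x,p))$. Passing to the limit in $u(\xi_k(b))-u(\xi_k(a))=\int_a^bL(\xi_k,\dot\xi_k)\,ds$ (continuity of $u$, uniform $C^1$ convergence on $[a,b]$) gives $u(\xi(b))-u(\xi(a))=\int_a^bL(\xi,\dot\xi)\,ds$ for all $a<b\le0$, so $\xi$ is backward calibrated; finally $\xi(0)=x$ and $L_v(x,\dot\xi(0))=L_v(x,H_p(x,p))=p$ by Legendre duality.

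\emph{Necessity ($\Rightarrow$), uniqueness, and main obstacle.} If $\xi_1,\xi_2$ are $C^2$ backward calibrated curves with $\xi_i(0)=x$ and $L_v(x,\dot\xi_i(0))=p$, then $\dot\xi_1(0)=\dot\xi_2(0)=H_p(x,p)$ by Legendre duality, and each $\xi_i$, being a minimizer on every $[a,0]$, solves the Euler--Lagrange equation; since the Euler--Lagrange (equivalently Hamiltonian) system has a locally Lipschitz right-hand side --- here $H,L\in C^2$ and $L_{vv}>0$ --- uniqueness for the initial value problem gives $\xi_1\equiv\xi_2$. The main obstacle is the existence half of $(\Rightarrow)$: one must build the $\xi_k$ on the \emph{entire} half-line, which is exactly where the a priori compactness of calibrated curves is needed (to let Arzel\`a--Ascoli and continuous dependence be applied on each finite interval and then diagonalized), and one must check that calibration survives both the weak limit in $m$ (via lower semicontinuity of the action) and the $C^1_{\mathrm{loc}}$ limit in $k$. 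The only other point needing separate care is the equilibrium case $\xi\equiv x$ in $(\Leftarrow)$.
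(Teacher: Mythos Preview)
The paper does not actually prove this proposition: it merely states the result and refers the reader to \cite{Cannarsa_Sinestrari_book} and \cite{Rifford2008} for the proof. Your argument is correct and is precisely the standard route taken in those references---construct backward calibrated curves through nearby differentiability points via the Lax--Oleinik representation and a priori velocity bounds, pass to the limit using continuous dependence of the Hamiltonian flow, and obtain uniqueness from ODE uniqueness for the Euler--Lagrange equation---so there is nothing to compare against in the paper itself.

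One minor comment: in the sufficiency direction you single out the equilibrium case $\xi\equiv x$ as needing separate treatment, but in fact it does not arise as an exception. If $\xi$ is calibrated and constant on some interval, the Euler--Lagrange equation forces it to be constant everywhere, and then $p=L_v(x,0)$ with $H(x,p)=0$; but one can still approach $x$ along a sequence of differentiability points (for instance by the same construction you use in the necessity direction, starting from any $x_k\to x$ with $Du(x_k)\to p$) to conclude $p\in D^*u(x)$. Alternatively, since in that case $x$ lies in the projected Aubry set, $u$ is differentiable at $x$ and $D^*u(x)=\{Du(x)\}=\{p\}$ directly.
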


\subsection{Propagation of singularities}

In this paper, we will discuss various types of singular arcs  describing the propagation of  singularities for Lipschitz semiconcave solutions of the Hamilton-Jacobi equations \eqref{eq:intro_HJ_local} and \eqref{eq:intro_HJs}. 

\begin{defn}
$x$ is called a \emph{critical point with respect to $(H,u)$} if $0\in H_p(x_0,D^+u(x))$.
\end{defn}
Let $u$ be a Lipschitz  semiconcave viscosity solution of \eqref{eq:intro_HJ_local} and $x\in\SING$. 
\begin{defn}\label{def:sc}
A \emph{singular characteristic from $x$}  is   a Lipschitz arc $\mathbf{x}:[0,\tau]\to\Omega\;(\tau>0))$ such that:
\begin{enumerate}[(1)]
	\item $\mathbf{x}$ is a generalized characteristic with $\mathbf{x}(0)=x$,
	\item $\mathbf{x}(t)\in\SING$ for all $t\in[0,\tau]$,
	\item $\dot{\mathbf{x}}^+(0)=H_p(x_0,p_0)$ where $p_0=\arg\min\{H(x_0,p): p\in D^+u(x)\}$,
	\item $\lim_{t\to0^+}\operatorname*{ess\ sup}_{s\in[0,t]}|\dot{\mathbf{x}}(s)-\dot{\mathbf{x}}^+(0)|=0$.
\end{enumerate}
\end{defn}

The following existence of singular characteristic is due to \cite{Albano_Cannarsa2002,Cannarsa_Yu2009}.

\begin{Pro}\label{pro:gc_local}
Let $u$ be a Lipschitz  semiconcave  solution of \eqref{eq:intro_HJ_local} and $x\in\SING$. Then, there exists a singular characteristic $\mathbf{y}:[0,T]\to\Omega$ with $\mathbf{y}(0)=x$.
\end{Pro}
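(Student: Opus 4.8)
The plan is to construct $\mathbf{y}$ by a time-discretization (Euler polygonal) scheme built on the ``steepest'' generalized characteristic direction and then verify the four requirements in Definition~\ref{def:sc}, with particular care for the refined initial conditions (3)--(4). I would start from an energy estimate at singular points: since $u$ is a viscosity solution of \eqref{eq:intro_HJ_local}, $H(y,\cdot)\leqslant 0$ on $D^+u(y)$ for every $y$; if $y\in\SING$, pick two distinct points $q_1\neq q_2$ in $D^+u(y)$, and strict convexity of $H(y,\cdot)$ gives $H\bigl(y,\tfrac12(q_1+q_2)\bigr)<0$, hence $\min_{p\in D^+u(y)}H(y,p)<0$. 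In particular, writing $p_0=\arg\min\{H(x_0,p):p\in D^+u(x)\}$ and $v_0:=H_p(x_0,p_0)$, we have the strict gap $H(x_0,p_0)<0$, with no hypothesis on $x$ being critical or not. It is convenient to record the nonnegative ``strength'' functional $\mathcal{E}(y):=-\min_{p\in D^+u(y)}H(y,p)$, which satisfies $\mathcal{E}(y)>0$ exactly when $y\in\SING$ and which is upper semicontinuous in $y$ because $x\rightsquigarrow D^+u(x)$ is upper semicontinuous (Proposition~\ref{basic_facts_of_superdifferential}(b)).

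Next I would prove a \emph{single-step lemma}: for each small $h>0$ there is a map $\Phi_h$ defined on a neighbourhood of $x$ inside $\SING$, obtained from a localized Hopf--Lax/proximal minimization of the schematic form $\Phi_h(y)=\arg\min_{z}\{u(z)+(\text{minimal kinetic cost of joining }y\text{ to }z\text{ in time }h)\}$, suitably centred, such that, uniformly for $y$ near $x$: (i) $\Phi_h(y)\in\SING$, and moreover $\mathcal{E}(\Phi_h(y))\geqslant\mathcal{E}(y)-Ch$; (ii) $\Phi_h(y)=y+h\,H_p(y,p(y))+o(h)$, where $p(y)=\arg\min\{H(y,p):p\in D^+u(y)\}$; (iii) $\Phi_h$ is Lipschitz with constant independent of $h$. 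Estimate (i) is where the strict gap $\mathcal{E}(y)>0$ is used: it prevents any $u$-calibrated curve from reaching $\Phi_h(y)$, so $\Phi_h(y)$ cannot be a differentiability point of $u$, and the quantitative decay of $\mathcal{E}$ follows by comparing the values of $u$ and of the localized operator along the step. Estimate (ii) follows from the semiconcavity inequality \eqref{criterion_for_lin_semiconcave} together with continuity of $H_p$, and (iii) from local boundedness and upper semicontinuity of $x\rightsquigarrow D^+u(x)$.

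Granting the single-step lemma, fix $\tau>0$ small; for $N\in\N$ put $h=\tau/N$, $x_0^N=x$, $x_{k+1}^N=\Phi_h(x_k^N)$, and let $\mathbf{y}_N:[0,\tau]\to\Omega$ be the polygonal interpolation. By (i), iterating gives $\mathcal{E}(x_k^N)\geqslant\mathcal{E}(x)-Ckh\geqslant\mathcal{E}(x)-C\tau\geqslant\tfrac12\mathcal{E}(x)>0$ once $\tau<\mathcal{E}(x)/(2C)$, so every node is uniformly ``strongly singular''. By (iii) and local boundedness of $H_p(\cdot,D^+u(\cdot))$ the curves $\mathbf{y}_N$ are equi-Lipschitz, so a subsequence converges uniformly to a Lipschitz arc $\mathbf{y}:[0,\tau]\to\Omega$ with $\mathbf{y}(0)=x$. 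A standard closure argument for differential inclusions with upper semicontinuous, convex- and compact-valued right-hand side gives $\dot{\mathbf{y}}(s)\in\mathrm{co}\,H_p(\mathbf{y}(s),D^+u(\mathbf{y}(s)))$ a.e., i.e.\ (1). For (2): upper semicontinuity of $\mathcal{E}$ and the uniform bound $\mathcal{E}(x_k^N)\geqslant\tfrac12\mathcal{E}(x)$ pass to the limit along nodes converging to $\mathbf{y}(t)$, so $\mathcal{E}(\mathbf{y}(t))\geqslant\tfrac12\mathcal{E}(x)>0$, hence $\mathbf{y}(t)\in\SING$ for all $t$. For (3): by (ii) the first chord satisfies $(x_1^N-x)/h\to H_p(x_0,p_0)=v_0$ with uniform $o(h)$ remainder, so $(\mathbf{y}(t)-x)/t\to v_0$ as $t\to0^+$. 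For (4): $\mathbf{y}(s)\to x$ forces, by upper semicontinuity of $D^+u$, $D^+u(\mathbf{y}(s))$ into any given neighbourhood of $D^+u(x)$, hence the minimizing selections satisfy $p(\mathbf{y}(s))\to p_0$ and $H_p(\mathbf{y}(s),p(\mathbf{y}(s)))\to v_0$; feeding this back through (ii) yields $\operatorname*{ess\ sup}_{s\in[0,t]}|\dot{\mathbf{y}}(s)-v_0|\to0$ as $t\to0^+$, which is (4) since $v_0=\dot{\mathbf{y}}^+(0)$.

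I expect the main obstacle to be the single-step lemma, and within it the simultaneous validity of (i) and (ii): one needs a construction ``variational enough'' to keep the image inside $\SING$ with quantitatively controlled strength --- which forces a calibration argument resting on the gap $H(x_0,p_0)<0$ --- yet ``regular enough'' to admit a first-order expansion in $h$ with remainder uniform in the base point $y$ --- which rests on the quantitative semiconcavity inequality \eqref{criterion_for_lin_semiconcave}. Reconciling these demands, and propagating them uniformly as $y$ ranges over singular points near $x$, is the technical heart of the argument (carried out in \cite{Cannarsa_Yu2009}, building on the propagation result of \cite{Albano_Cannarsa2002}).
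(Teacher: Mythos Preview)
The paper does not prove this proposition; it simply attributes it to \cite{Albano_Cannarsa2002,Cannarsa_Yu2009}. The construction in those references (and, for the closely related strict characteristics, in Appendix~A of the present paper) is \emph{not} a discrete Hopf--Lax scheme but a mollification argument: one approximates $u$ by smooth $u_m$ with $D^2u_m\leqslant C_2I$, solves the genuine ODE $\dot{\mathbf{y}}_m=H_p(\mathbf{y}_m,Du_m(\mathbf{y}_m))$, and passes to the limit. The decisive estimate there is
\[
\frac{d}{dt}\,H\big(\mathbf{y}_m(t),Du_m(\mathbf{y}_m(t))\big)
=\big\langle H_x+ D^2u_m\,H_p,\,H_p\big\rangle\leqslant C,
\]
which survives in the limit and is exactly what pins down both (3) and (4). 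Your scheme is a different animal, essentially an iterated version of the intrinsic construction of Proposition~\ref{pro:y}; note, however, that the paper explicitly records that the intrinsic characteristic satisfies only (1)--(3) and \emph{not} (4) in general. So you should not expect (4) to fall out of a Lax--Oleinik step for free.

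There is a concrete gap in your argument for (4). You write that upper semicontinuity of $D^+u$ forces $D^+u(\mathbf{y}(s))$ into a neighbourhood of $D^+u(x)$ and ``hence the minimizing selections satisfy $p(\mathbf{y}(s))\to p_0$''. This inference is false as stated: upper semicontinuity only gives $D^+u(\mathbf{y}(s))\subset D^+u(x)+\varepsilon B$, and the minimum of $H(\mathbf{y}(s),\cdot)$ over a \emph{subset} can be attained far from $p_0$ (think of $D^+u(x)$ a triangle with $p_0$ a vertex, and $D^+u(\mathbf{y}(s))$ the opposite edge). What actually forces $p(\mathbf{y}(s))\to p_0$ is the quantitative energy bound you stated in (i), namely $\mathcal{E}(\Phi_h(y))\geqslant\mathcal{E}(y)-Ch$: iterated and passed to the limit, it gives $H(\mathbf{y}(s),p(\mathbf{y}(s)))\leqslant H(x,p_0)+Cs$, and only then strict convexity pins $p(\mathbf{y}(s))$ near $p_0$. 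But (i) is precisely the step you left open, and for a Hopf--Lax/proximal step it is not obvious---indeed, it is the analogue of the mollified energy inequality above, and you have not indicated how to obtain it without differentiating along a smooth flow. (Incidentally, your single step as written, $\arg\min_z\{u(z)+A_h(y,z)\}$, is the \emph{negative} Lax--Oleinik and propagates backwards; for forward propagation one needs $\arg\max_z\{u(z)-A_h(y,z)\}$ as in \eqref{eq:sup_max_rep}.) If you want to keep a variational single step, the honest route is to prove (i) directly for that step; otherwise, the mollification argument of \cite{Cannarsa_Yu2009} gives (3)--(4) cleanly via the displayed energy inequality.
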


Now, suppose $u$ is a Lipschitz  semiconcave weak KAM solution of \eqref{eq:intro_HJs}. In \cite{Cannarsa_Cheng3}, another singular curve for $u$  is constructed as follows. First, it is shown that there exists $\lambda_0>0$ such that for any $(t,x)\in \R_+\times \R^n$ and any maximizer $y$  for the function $u(\cdot)-A_t(x,\cdot)$, we have that $|y-x|\leqslant\lambda_0 t$. Then, taking $\lambda=\lambda_0+1$, one shows  that there exists $t_{0}>0$ such that, if $t\in(0,t_0]$, then there exists a unique  $y_{t,x}\in B(x,\lambda t)$ of $u(\cdot)-A_t(x,\cdot)$ such that
\begin{equation}\label{eq:sup_max_rep}
	\breve{T}_tu(x)=u(y_{t,x})-A_t(x,y_{t,x}).
\end{equation}
Moreover, such a $t_0$ is  such that $-A_t(x,\cdot)$ is concave with constant $C_2/t$ and $C_1-C_2/t<0$ for $0<t\leqslant t_0$. We now define the curve
\begin{equation}\label{eq:curve_max}
	\mathbf{z}(t)=
	\begin{cases}
		x,&t=0,\\
		y_{t,x},& t\in(0,t_0].
	\end{cases}
\end{equation}

\begin{Pro}[\cite{Cannarsa_Cheng3}]\label{pro:y}
Let  the curve $\mathbf{z}$ be defined in \eqref{eq:curve_max}. Then, the following  holds:
\begin{enumerate}[\rm (1)]
	\item $\mathbf{z}$ is a Lipschitz generalized characteristic,
	\item if $x\in\SING$ then $\mathbf{z}(t)\in\SING$ for all $t\in[0,t_0]$,
	\item $\dot{\mathbf{z}}^+(0)$ exists and
	\begin{align*}
		\dot{\mathbf{z}}^+(0)=H_p(x_0,p_0)
	\end{align*}
	where $p_0=\arg\min\{H(x_0,p): p\in D^+u(x)\}$.
\end{enumerate}  
\end{Pro}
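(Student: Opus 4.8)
The plan is to exploit the fact that, for $t\in(0,t_0]$, the map $y\mapsto u(y)-A_t(x,y)$ is uniformly strictly concave — with modulus $\kappa_t:=C_2/t-C_1\gtrsim 1/t$ — so that the maximizer $y_{t,x}$ is not only unique but quantitatively stable, and all derivatives of $A_t$ near $y_{t,x}$ are controlled (by the short‑time regularity of the action built into the choice of $t_0$). I would establish the assertions in the order: Lipschitz regularity of $\mathbf z$ (part of (1)), then the right derivative (3), then the propagation property (2), which I will \emph{deduce from} (3), and finally the generalized‑characteristic property. For Lipschitz regularity, fix $0<s\le t\le t_0$ and write $g_\tau:=u-A_\tau(x,\cdot)$. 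Uniform strict concavity gives $g_\tau(y_{\tau,x})-g_\tau(y)\ge\tfrac{\kappa_\tau}{2}|y-y_{\tau,x}|^2$; summing this for $\tau=s$ at $y=y_{t,x}$ and for $\tau=t$ at $y=y_{s,x}$ yields
\[
\big(A_s-A_t\big)(x,y_{t,x})-\big(A_s-A_t\big)(x,y_{s,x})\ \ge\ \tfrac{\kappa_s+\kappa_t}{2}\,|y_{t,x}-y_{s,x}|^2.
\]
The left‑hand side is bounded by $\big(\sup|\partial^2_{\tau y}A_\tau|\big)\,|t-s|\,|y_{t,x}-y_{s,x}|$, the supremum over $\tau\in[s,t]$ and $|y-x|\le\lambda\tau$, where $|\partial^2_{\tau y}A_\tau|\lesssim 1/\tau$, while $\kappa_s+\kappa_t\gtrsim 1/\min(s,t)$, so the quadratic term dominates and $|y_{t,x}-y_{s,x}|\le C|t-s|$ with $C$ independent of $s,t$ (in particular up to $s=0$, using $\mathbf z(0)=x$ and $|y_{t,x}-x|\le\lambda t$). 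Hence $\mathbf z$ is Lipschitz on $[0,t_0]$.

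For (3), put $v_t:=(y_{t,x}-x)/t\in\overline B(0,\lambda)$. Semiconcavity gives $u(x+tv)-u(x)\le t\langle p,v\rangle+\tfrac C2 t^2|v|^2$ for every $p\in D^+u(x)$, while $A_t(x,x+tv)=tL(x,v)+O(t^2)$ uniformly for $|v|\le\lambda$; hence $\tfrac1t\big(\breve{T}_tu(x)-u(x)\big)\le\min_{p\in D^+u(x)}\langle p,v_t\rangle-L(x,v_t)+O(t)$. For the matching lower bound I would insert the competitor $y=x+tv_0$, $v_0:=H_p(x,p_0)$, and use that the one‑sided directional derivative of the semiconcave $u$ at $x$ along $v_0$ equals $\min_{p\in D^+u(x)}\langle p,v_0\rangle$, which by the first‑order optimality $\langle v_0,p-p_0\rangle\ge0$ on $D^+u(x)$ equals $\langle p_0,v_0\rangle$; this gives $\tfrac1t\big(\breve{T}_tu(x)-u(x)\big)\ge\langle p_0,v_0\rangle-L(x,v_0)+o(1)=H(x,p_0)+o(1)$. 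Consequently $v_t$ realizes, up to $o(1)$, the maximum of the strictly concave map $v\mapsto\min_{p\in D^+u(x)}\langle p,v\rangle-L(x,v)$; by a minimax argument (Sion's theorem) this maximum equals $\min_{p\in D^+u(x)}H(x,p)=H(x,p_0)$, attained at the saddle point $(v_0,p_0)$, so $v_0$ is its unique maximizer. Strict concavity then forces $v_t\to v_0$, i.e. $\dot{\mathbf z}^+(0)=H_p(x,p_0)$, which also identifies $\dot{\mathbf z}^+(0)$ by the formula in Definition~\ref{def:sc}.

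For (2), suppose that some $y_{\bar t,x}$, $\bar t\in(0,t_0]$, were a point of differentiability of $u$. Optimality gives $\nabla_yA_{\bar t}(x,y_{\bar t,x})\in D^+u(y_{\bar t,x})=\{p\}$ with $p:=Du(y_{\bar t,x})$; since $u=T_{\bar t}u$, the value $u(y_{\bar t,x})$ is attained along a $u$‑calibrated minimal extremal $\eta$ ending at $y_{\bar t,x}$, necessarily with $L_v(y_{\bar t,x},\dot\eta(\bar t))=p$. The minimal extremal $\xi$ for $A_{\bar t}(x,y_{\bar t,x})$ also satisfies $L_v(y_{\bar t,x},\dot\xi(\bar t))=\nabla_yA_{\bar t}(x,y_{\bar t,x})=p$, so by uniqueness of the Euler–Lagrange flow $\xi=\eta$; in particular $\eta(0)=x$, $\xi$ is $u$‑calibrated, and $\breve{T}_{\bar t}u(x)=u(y_{\bar t,x})-A_{\bar t}(x,y_{\bar t,x})=u(x)$. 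By the semigroup property and $\breve{T}_su\le u$ this forces $\breve{T}_su(x)=u(x)$ for all $s\in[0,\bar t]$, and then — since $\xi|_{[0,s]}$ is $u$‑calibrated and minimal — $y_{s,x}=\xi(s)$, i.e. $\mathbf z\equiv\xi$ on $[0,\bar t]$. Hence $\dot{\mathbf z}^+(0)=\dot\xi(0)=H_p(x,p^*)$ with $p^*:=L_v(x,\dot\xi(0))$, and from $\xi$ being $u$‑calibrated one checks, via the directional‑derivative identity above, Fenchel's inequality, and $H(x,\cdot)\le 0$ on $D^+u(x)$, that $p^*\in D^+u(x)$ with $H(x,p^*)=0$. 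Comparing with (3), $p_0=p^*$, so $\min_{p\in D^+u(x)}H(x,p)=0$; together with $H(x,\cdot)\le0$ on $D^+u(x)$ this yields $H(x,\cdot)\equiv0$ on $D^+u(x)$, and since $H(x,\cdot)$ is strictly convex its zero set contains no nondegenerate segment, so the convex set $D^+u(x)$ is a singleton — contradicting $x\in\SING$. Therefore $\mathbf z(t)\in\SING$ for all $t\in[0,t_0]$.

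Finally, for the generalized‑characteristic property I would use $\breve{T}_{t+h}u=\breve{T}_h(\breve{T}_tu)$: for small $h>0$ the maximizer $z_h$ of $z\mapsto\breve{T}_tu(z)-A_h(x,z)$ lies in $B(x,\lambda h)$, one has $y_{t+h,x}=y_{t,z_h}$, and the minimal extremal from $x$ to $y_{t+h,x}$ passes through $z_h$ at time $h$; thus $\mathbf z(t+h)-\mathbf z(t)=y_{t,z_h}-y_{t,x}$. Estimating this increment along the segment from $x$ to $z_h$ by the almost‑everywhere differential of the Lipschitz map $x'\mapsto y_{t,x'}$, and recalling that $\nabla_yA_t(x',y_{t,x'})\in D^+u(y_{t,x'})$ with $H_p\big(y_{t,x'},\nabla_yA_t(x',y_{t,x'})\big)=\dot\xi_{t,x'}(t)$, one obtains
\[
\mathbf z(t+h)-\mathbf z(t)\in h\,\overline{\mathrm{co}}\,\bigcup\big\{H_p(y,D^+u(y)):|y-\mathbf z(t)|\le Lh\big\}+o(h),
\]
$L$ being the Lipschitz constant of $\mathbf z$; letting $h\to0^+$ through the differentiability points of $\mathbf z$ and using upper semicontinuity of $D^+u$ gives $\dot{\mathbf z}(t)\in\mathrm{co}\,H_p\big(\mathbf z(t),D^+u(\mathbf z(t))\big)$ for a.e. $t$, so $\mathbf z$ is a generalized characteristic. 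The main obstacle is exactly this last step: converting the infinitesimal displacement of the optimal endpoint map $x'\mapsto y_{t,x'}$ into a convex combination of admissible velocities $H_p(\mathbf z(t),p)$, $p\in D^+u(\mathbf z(t))$, requires uniform $C^2$‑control of the action $A_\tau$ — equivalently, of the Jacobi fields of the Lagrangian flow — on the region $|y-x|\lesssim\tau$, combined with the upper semicontinuity of $D^+u$; the remaining ingredients are comparatively routine.
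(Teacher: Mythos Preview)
The paper does not prove this proposition; it is quoted from \cite{Cannarsa_Cheng3} without argument, so there is no in‑paper proof to compare your proposal against.

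That said, your arguments for Lipschitz continuity and for (3) are correct and are the standard ones: the uniform strict concavity of $y\mapsto u(y)-A_t(x,y)$, with modulus $\sim 1/t$, is precisely the engine behind both. Your deduction of (2) from (3) is also valid: once $y_{\bar t,x}$ is assumed regular, calibration plus uniqueness for the Euler--Lagrange flow forces $\mathbf z|_{[0,\bar t]}$ to be a calibrated extremal, and the chain $H(x,p_0)=H(x,p^*)=0$, $H(x,\cdot)\le 0$ on $D^+u(x)$ (Jensen, since $H(x,\cdot)\equiv 0$ on $D^*u(x)$), strict convexity $\Rightarrow D^+u(x)$ a singleton, is sound.

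There is, however, a genuine gap in the generalized‑characteristic step, beyond the technical point you flag. You write $\mathbf z(t+h)-\mathbf z(t)=y_{t,z_h}-y_{t,x}$ and propose to control this via the a.e.\ differential of $x'\mapsto y_{t,x'}$, invoking the identity $H_p\big(y_{t,x'},\nabla_yA_t(x',y_{t,x'})\big)=\dot\xi_{t,x'}(t)$. But the derivative $D_{x'}y_{t,x'}$ is the linearized endpoint map---a Jacobi‑field object sending a perturbation of the \emph{initial} point to a perturbation of the \emph{terminal} point---whereas $\dot\xi_{t,x'}(t)$ is simply the terminal velocity of the extremal; these are unrelated quantities, and the former does not in any obvious way produce vectors of the form $H_p(\mathbf z(t),p)$ with $p\in D^+u(\mathbf z(t))$. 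So the displayed inclusion $\mathbf z(t+h)-\mathbf z(t)\in h\,\overline{\mathrm{co}}\{H_p(y,D^+u(y)):|y-\mathbf z(t)|\le Lh\}+o(h)$ does not follow from the ingredients you list. To close this step one must argue differently, e.g.\ by comparing the first‑order optimality conditions $D_yA_t(x,\mathbf z(t))\in D^+u(\mathbf z(t))$ at nearby times and using the semiconcavity inequality to trap the difference quotient of $\mathbf z$ directly, rather than by differentiating the endpoint map in the starting point.
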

\noindent
Hereafter, we will refer to the arc $\mathbf{z}$ defined in \eqref{eq:curve_max} as the {\em intrinsic characteristic} from $x$.

\section{Singular characteristic on $\R^2$}

We now return to questions (Q1) and (Q2) from the Introduction. So far, we have introduced three kinds of singular arcs issuing from a point $x_0\in\SING$, namely
\begin{itemize}[$\bullet$]
\item strict singular characteristics, that is, solutions  to \eqref{eq:intro_sgc},
\item singular characteristics, introduced in Definition~\ref{def:sc}, and
\item the intrinsic singular characteristic $\mathbf{z}$ given  by Proposition \ref{pro:y}.
\end{itemize}
In this section, we will compare the first two notions of characteristics when $\Omega\subset\R^2$. 

We begin by introducing the following class of Lipschitz arcs.
\begin{defn}
 Given $T>0$, we denote by $\mbox{\rm Lip}_0(0,T;\Omega)$ the class of all Lipschitz arcs ${\mathbf{x}}:[0,T]\to\Omega$ such that
  the right derivative 
  $$\dot{\mathbf{x}}^+(0)=\lim_{t\downarrow 0}\frac{\mathbf{x}(t)-\mathbf{x}(0)}t$$ 
	does exist and satisfies
\begin{equation}
\label{right-continuity}
\lim_{t\to0^+}\operatorname*{ess\ sup}_{s\in[0,t]}|\dot{\mathbf{x}}(s)-\dot{\mathbf{x}}^+(0)|=0.
\end{equation}
\end{defn}

For any  $\mathbf{x}\in\mbox{\rm Lip}_0(0,T;\Omega)$  we set
\begin{equation}
\label{omega}
\omega_{\mathbf{x}}(t):=\operatorname*{ess\ sup}_{s\in[0,t]}|\dot{\mathbf{x}}(s)-\dot{\mathbf{x}}^+(0)|.
\end{equation}
Owing to \eqref{right-continuity}, we have that $\omega_{\mathbf{x}}(t)\to 0$ as $t\downarrow 0$.

\begin{Lem}\label{le:injectivity}
Let $\mathbf{x}\in\mbox{\rm Lip}_0(0,T;\Omega)$  be such that $\dot{\mathbf{x}}^+(0)\neq 0$. Then, 
\begin{equation}\label{eq:X_phi}
	\big| |\mathbf{x}(t_1)-\mathbf{x}(t_0)|-|t_1-t_0|\cdot|\dot{\mathbf{x}}^+(0)|\big| 
	\leqslant|t_1-t_0|\omega_{\mathbf{x}}(t_1\vee   t_0)\quad\forall t_0,t_1\in [0,T]
\end{equation} 
and $\mathbf{x}$ is injective on some interval $[0,T_0]$ with $0<T_0<T$.
\end{Lem}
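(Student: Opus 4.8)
The plan is to prove the two claims in sequence, the estimate \eqref{eq:X_phi} first and then injectivity as a consequence. For the estimate, I would write $\mathbf{x}(t_1)-\mathbf{x}(t_0)=\int_{t_0}^{t_1}\dot{\mathbf{x}}(s)\,ds$ (assuming $t_0\le t_1$, the other case being symmetric) and split the integrand as $\dot{\mathbf{x}}(s)=\dot{\mathbf{x}}^+(0)+\big(\dot{\mathbf{x}}(s)-\dot{\mathbf{x}}^+(0)\big)$. This gives $\mathbf{x}(t_1)-\mathbf{x}(t_0)=(t_1-t_0)\dot{\mathbf{x}}^+(0)+R$ with $|R|\le\int_{t_0}^{t_1}|\dot{\mathbf{x}}(s)-\dot{\mathbf{x}}^+(0)|\,ds\le (t_1-t_0)\,\omega_{\mathbf{x}}(t_1)$ by the definition \eqref{omega} of $\omega_{\mathbf{x}}$ and monotonicity of the essential supremum in the upper endpoint. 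Then the reverse triangle inequality $\big||a+b|-|a|\big|\le|b|$ applied with $a=(t_1-t_0)\dot{\mathbf{x}}^+(0)$ and $b=R$ yields \eqref{eq:X_phi} directly, since $|a|=(t_1-t_0)|\dot{\mathbf{x}}^+(0)|$.

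For the injectivity statement, I would use \eqref{right-continuity}: since $\omega_{\mathbf{x}}(t)\to 0$ as $t\downarrow 0$ and $|\dot{\mathbf{x}}^+(0)|>0$, choose $T_0\in(0,T)$ small enough that $\omega_{\mathbf{x}}(T_0)<\tfrac12|\dot{\mathbf{x}}^+(0)|$. Then for any $t_0,t_1\in[0,T_0]$ with $t_0\ne t_1$, \eqref{eq:X_phi} gives
\[
|\mathbf{x}(t_1)-\mathbf{x}(t_0)|\ge |t_1-t_0|\cdot|\dot{\mathbf{x}}^+(0)|-|t_1-t_0|\,\omega_{\mathbf{x}}(T_0)\ge \tfrac12|t_1-t_0|\cdot|\dot{\mathbf{x}}^+(0)|>0,
\]
so $\mathbf{x}(t_1)\ne\mathbf{x}(t_0)$, i.e. $\mathbf{x}$ is injective on $[0,T_0]$.

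I do not expect any serious obstacle here; the proof is essentially a careful bookkeeping of the fundamental theorem of calculus together with the defining property \eqref{right-continuity} of the class $\mbox{\rm Lip}_0(0,T;\Omega)$. The only minor point to be careful about is the monotonicity used to bound the integral remainder by $(t_1-t_0)\,\omega_{\mathbf{x}}(t_1\vee t_0)$ rather than by the essential supremum over $[t_0,t_1]$ — this is harmless since $[t_0,t_1]\subset[0,t_1\vee t_0]$ — and the symmetric treatment of the case $t_1<t_0$, which is immediate by swapping roles. Everything else is the two elementary inequalities (reverse triangle inequality and the bound on $R$), so the write-up will be short.
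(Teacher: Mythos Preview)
Your proposal is correct and follows essentially the same approach as the paper: both split $\mathbf{x}(t_1)-\mathbf{x}(t_0)$ into $(t_1-t_0)\dot{\mathbf{x}}^+(0)$ plus a remainder bounded by $|t_1-t_0|\,\omega_{\mathbf{x}}(t_1\vee t_0)$, then derive injectivity from \eqref{eq:X_phi} by choosing $T_0$ small enough that $\omega_{\mathbf{x}}(T_0)<|\dot{\mathbf{x}}^+(0)|$. The only cosmetic difference is that the paper argues by contradiction (assuming $\mathbf{x}(t_1)=\mathbf{x}(t_0)$) while you give the quantitative lower bound directly.
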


\begin{proof}
Observe that, for any $0\leqslant t_0\leqslant t_1\leqslant T$, the identity
\begin{equation*}
	\mathbf{x}(t_1)-\mathbf{x}(t_0)=\int^{t_1}_{t_0}\dot{\mathbf{x}}(t)\ dt=(t_1-t_0)\dot{\mathbf{x}}^+(0)+\int^{t_1}_{t_0}(\dot{\mathbf{x}}(t)-\dot{\mathbf{x}}^+(0))\ dt
\end{equation*}
immediately gives \eqref{eq:X_phi}. In turn, \eqref{eq:X_phi} implies that, if $\mathbf{x}(t_1)-\mathbf{x}(t_0)=0$, then
\begin{align*}
	|t_1-t_0|\cdot|\dot{\mathbf{x}}^+(0)|\leqslant|t_1-t_0|\omega_{\mathbf{x}}(t_1)
\end{align*}
Since $\dot{\mathbf{x}}^+(0)\not=0$, we conclude that $t_1=t_0$ if $ t_0, t_1\in[0, T_0]$ with $T_0$ sufficiently small.  
\end{proof}

Let $x\in\R^2$ and let $\theta\in\R^n$ be a unit vector. For any $\rho \in(0,1)$ let us consider the cone
\begin{equation}
\label{eq:cone}
C_\rho (x,\theta)=\big\{y\in\R^2~\big|~|\langle y-x,\theta\rangle|\geqslant \rho |y-x| \big\}
\end{equation}
with vertex in $x$, amplitude $\rho$, and axis $\theta$. Clearly, $C_\rho (x,\theta)$ is given by the union of the two cones
\begin{equation*}
C^+_\rho (x,\theta)=\big\{y\in\R^2~\big|~\langle y-x,\theta\rangle\geqslant \rho |y-x| \big\}
\end{equation*}
and
\begin{equation*}
C^-_\rho (x,\theta)=\big\{y\in\R^2~\big|~\langle y-x,\theta\rangle\leqslant -\rho |y-x| \big\},
\end{equation*}
which intersect each other only at $x$.
\begin{Lem}\label{lem:cone}
Let $\mathbf{x}_j\in\mbox{\rm Lip}_0(0,T;\Omega)$ ($j=1,2$) be such that
\begin{enumerate}[\rm (i)]
	\item  $\mathbf{x}_1(0)= \mathbf{x}_2(0)=:x_0$, 
	\item $\dot{\mathbf{x}}_1^+(0)=\dot{\mathbf{x}}_2^+(0)$, and
	\item  $\dot{\mathbf{x}}_j(s)\neq 0$  ($j=1,2$) for a.e. $s\in[0,T]$.
	\end{enumerate}
Define
\begin{equation}
\label{eq;theta1}
\theta_1(s)= \frac{\dot{\mathbf{x}}_1(t)}{|\dot{\mathbf{x}}_1(t)|}\qquad(s\in[0,T]\mbox{ a.e.})
\end{equation}
and fix $\rho\in (0,1)$. Then the following holds true:
\begin{enumerate}[\rm (a)]
	\item there exists $s_\rho  \in(0,T]$ such that $x_0\in C^-_\rho (\mathbf{x}_1(s),\theta_1(s))$ for a.e. $s\in[0,s_\rho  ]$;
	\item there exists $\tau_\rho \in(0,T]$ such that for all $t\in(0,\tau_\rho]$ there exists $\sigma_\rho  (t)\in (0,T]$ such that
	\begin{gather}
		|\mathbf{x}_2(t)-\mathbf{x}_1(s)|\leqslant  \frac{1+\rho}{2\rho} t|\dot{\mathbf{x}}_1^+(0)|\quad \forall s\in[0,\sigma_\rho  (t)]\label{eq:b1}\\
		\mathbf{x}_2(t)\in C^+_\rho (\mathbf{x}_1(s),\theta_1(s))\quad\mbox{for a.e. }s\in[0,\sigma_\rho  (t)].\label{eq:b2}
	\end{gather}
\end{enumerate}
\end{Lem}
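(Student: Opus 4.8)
The plan is to reduce everything to two elementary first-order expansions and then chase constants. Write $v:=\dot{\mathbf{x}}_1^+(0)=\dot{\mathbf{x}}_2^+(0)$ and $e:=v/|v|$; the statement is to be read with $v\neq0$ (which, in the situations of interest, follows from noncriticality of $x_0$, and without which \eqref{eq:b1} would force $\mathbf{x}_2(t)\equiv\mathbf{x}_1(s)$). Set $\omega_j:=\omega_{\mathbf{x}_j}$ as in \eqref{omega}, so $\omega_j$ is nondecreasing and $\omega_j(t)\to0$ as $t\downarrow0$. I would first record, from $\mathbf{x}_j(s)-x_0=\int_0^s\dot{\mathbf{x}}_j(r)\,dr$, that
\begin{equation*}
	|\mathbf{x}_j(s)-x_0-sv|\leqslant\int_0^s|\dot{\mathbf{x}}_j(r)-v|\,dr\leqslant s\,\omega_j(s),\qquad s\in[0,T];
\end{equation*}
and, from $|\dot{\mathbf{x}}_1(s)-v|\leqslant\omega_1(s)$ a.e.\ together with $\big|\,a/|a|-b/|b|\,\big|\leqslant2|a-b|/|b|$ for nonzero vectors $a,b$, that
\begin{equation*}
	|\theta_1(s)-e|\leqslant\frac{2\,\omega_1(s)}{|v|}\qquad\text{for a.e. }s\in[0,T].
\end{equation*}
Together with $\langle e,\theta_1(s)\rangle=1-\tfrac12|\theta_1(s)-e|^2$, these are all the analytic input required; the rest is bookkeeping.

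For part (a): fix $s_\rho\in(0,T]$ (to be chosen) and set $\delta_\rho:=2\omega_1(s_\rho)/|v|$. For a.e.\ $s\in(0,s_\rho]$ the two bounds above yield $|\theta_1(s)-e|\leqslant\delta_\rho$ and $|\mathbf{x}_1(s)-x_0-sv|\leqslant s\omega_1(s_\rho)$, hence
\begin{equation*}
	\langle x_0-\mathbf{x}_1(s),\theta_1(s)\rangle\leqslant-s|v|\big(1-\tfrac12\delta_\rho^2\big)+s\omega_1(s_\rho),\qquad |x_0-\mathbf{x}_1(s)|\leqslant s\big(|v|+\omega_1(s_\rho)\big).
\end{equation*}
Dividing, the ratio $-\langle x_0-\mathbf{x}_1(s),\theta_1(s)\rangle\big/|x_0-\mathbf{x}_1(s)|$ is at least $\big(1-\tfrac12\delta_\rho^2-\omega_1(s_\rho)/|v|\big)\big/\big(1+\omega_1(s_\rho)/|v|\big)$, which tends to $1$ as $s_\rho\downarrow0$; choosing $s_\rho$ so that this quantity exceeds $\rho$ gives $x_0\in C^-_\rho(\mathbf{x}_1(s),\theta_1(s))$ for a.e.\ $s\in[0,s_\rho]$. (Positivity of $|x_0-\mathbf{x}_1(s)|$ on $(0,s_\rho]$ is clear, or one invokes Lemma~\ref{le:injectivity}.)

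For part (b): the geometry is that when $s$ is small compared with $t$, the chord $\mathbf{x}_2(t)-\mathbf{x}_1(s)$ is close to $(t-s)v$ — nearly parallel to both $e$ and $\theta_1(s)$, and of length comparable to $t|v|$. I would first pick $\tau_\rho\in(0,T]$ with $\omega_2(\tau_\rho)\leqslant(1-\rho)|v|/32$, then, for each $t\in(0,\tau_\rho]$, pick $\sigma_\rho(t)\in(0,t/2]$ with $\omega_1(\sigma_\rho(t))\leqslant(1-\rho)|v|/32$ (possible since $\omega_1(s)\to0$). For $s\in[0,\sigma_\rho(t)]$ write $\mathbf{x}_2(t)-\mathbf{x}_1(s)=(t-s)v+R$; the first expansion gives $|R|\leqslant E(t):=t\big(\omega_2(\tau_\rho)+\omega_1(\sigma_\rho(t))\big)\leqslant t(1-\rho)|v|/16$, and $t/2\leqslant t-s\leqslant t$ then gives
\begin{equation*}
	\tfrac{t}{2}|v|-E(t)\leqslant|\mathbf{x}_2(t)-\mathbf{x}_1(s)|\leqslant t|v|+E(t).
\end{equation*}
The upper bound yields \eqref{eq:b1} at once, since $t|v|+E(t)\leqslant t|v|\big(1+(1-\rho)/16\big)\leqslant\tfrac{1+\rho}{2\rho}t|v|$ (the last inequality holds for $\rho\in(0,1)$). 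For \eqref{eq:b2}, split $\langle\mathbf{x}_2(t)-\mathbf{x}_1(s),\theta_1(s)\rangle=\langle\mathbf{x}_2(t)-\mathbf{x}_1(s),e\rangle+\langle\mathbf{x}_2(t)-\mathbf{x}_1(s),\theta_1(s)-e\rangle$: the first summand equals $(t-s)|v|+\langle R,e\rangle\geqslant|\mathbf{x}_2(t)-\mathbf{x}_1(s)|-2E(t)$ (because $(t-s)|v|\geqslant|\mathbf{x}_2(t)-\mathbf{x}_1(s)|-E(t)$ and $\langle R,e\rangle\geqslant-E(t)$), and the second is $\geqslant-|\mathbf{x}_2(t)-\mathbf{x}_1(s)|\,\delta(t)$ with $\delta(t):=2\omega_1(\sigma_\rho(t))/|v|\leqslant(1-\rho)/16$, whence
\begin{equation*}
	\langle\mathbf{x}_2(t)-\mathbf{x}_1(s),\theta_1(s)\rangle-\rho\,|\mathbf{x}_2(t)-\mathbf{x}_1(s)|\geqslant|\mathbf{x}_2(t)-\mathbf{x}_1(s)|\big(1-\delta(t)-\rho\big)-2E(t)\geqslant\tfrac{t|v|}{4}\cdot\tfrac{1-\rho}{2}-2E(t)\geqslant0,
\end{equation*}
using $1-\delta(t)-\rho\geqslant(1-\rho)/2$, $|\mathbf{x}_2(t)-\mathbf{x}_1(s)|\geqslant\tfrac{t}{2}|v|-E(t)\geqslant t|v|/4$, and $2E(t)\leqslant t|v|(1-\rho)/8$. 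This is \eqref{eq:b2} for a.e.\ $s\in[0,\sigma_\rho(t)]$.

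I do not expect a genuine obstacle here: the whole content is the two expansions above plus constant-chasing. The one point that needs care in part (b) is the double role of $\sigma_\rho(t)$ — it has to be small \emph{in absolute terms} (to make $\omega_1(\sigma_\rho(t))$ and $\delta(t)$ small) and also small \emph{relative to} $t$, since the requirement $\sigma_\rho(t)\leqslant t/2$ is exactly what yields the lower bound $|\mathbf{x}_2(t)-\mathbf{x}_1(s)|\gtrsim t|v|$ that absorbs the $O(E(t))$ errors in the cone estimate. Getting the quantifiers in the right order — fix $\rho$, then $\tau_\rho$ through a condition on $\omega_2$, then $\sigma_\rho(t)$ depending on $t$ through a condition on $\omega_1$ — is the only delicate bookkeeping; the ``a.e.\ $s$'' clauses cause no trouble, as they merely discard the null set on which $\dot{\mathbf{x}}_1$ vanishes or $|\dot{\mathbf{x}}_1-v|>\omega_1$.
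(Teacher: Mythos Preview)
Your proof is correct and follows essentially the same approach as the paper: both arguments rest on the first-order expansion $\mathbf{x}_2(t)-\mathbf{x}_1(s)=(t-s)v+\text{error}$ together with the bound $|\theta_1(s)-e|\to0$, and then verify the cone inequalities by making the error terms small. The only noteworthy difference is stylistic: the paper works with generic $o(\cdot)$ terms and asserts that $\tau_\rho$ and $\sigma_\rho(t)$ can be chosen to satisfy the needed smallness conditions, whereas you fix explicit thresholds (e.g.\ $\omega_2(\tau_\rho)\leqslant(1-\rho)|v|/32$ and $\sigma_\rho(t)\leqslant t/2$) and carry the constants through; your remark about the double role of $\sigma_\rho(t)$ makes transparent a point the paper leaves implicit.
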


\begin{proof}
Hereafter, we denote by  $o_i(s)\,(i\in\N)$ any (scalar- or vector-valued) function such that  
$$\lim_{s\to0^+}\frac{o_i(s)}s=0.$$
In view of \eqref{eq:X_phi} we conclude that 
\begin{equation}
\label{eq:base}
|x_0-\mathbf{x}_1(s)|=s|\dot{\mathbf{x}}_1^+(0)|+o_1(s)\quad\forall s\in[0,T].
\end{equation}
Moreover, setting $\theta_1(0)=\dot{\mathbf{x}}_1^+(0)/|\dot{\mathbf{x}}_1^+(0)|$, for a.e. $s\in [0,T]$ we have that
\begin{equation}\label{eq:base2}
	\begin{split}
		\langle x_0-\mathbf{x}_1(s),\theta_1(s)\rangle=&\,\langle x_0-\mathbf{x}_1(s),\theta_1(0)\rangle+\langle x_0-\mathbf{x}_1(s),\theta_1(s)-\theta_1(0)\rangle\\
		=&\,-s|\dot{\mathbf{x}}^+_1(0)|+o_2(s).
	\end{split}
\end{equation}
Now, having fixed $\rho \in(0,1)$ let  $s_\rho  \in(0,T_1]$ be such that, for a.e. $s\in [0,s_\rho  ]$,
\begin{equation*}
\frac{|o_1(s)|}s \leqslant \frac{1-\rho }{2\rho} |\dot{\mathbf{x}}^+_1(0)|\quad\mbox{and}\quad\frac{|o_2(s)|}s \leqslant \frac{1-\rho }2 |\dot{\mathbf{x}}^+_1(0)|.
\end{equation*}
Then $|x_0-\mathbf{x}_1(s)|\leqslant \frac{1+\rho}{2\rho}s|\dot{\mathbf{x}}_1^+(0)|$ by \eqref{eq:base}. From  \eqref{eq:base2} it follows that
\begin{equation*}
\langle x_0-\mathbf{x}_1(s),\theta_1(s)\rangle
\leqslant -\frac{1+\rho }2 s |\dot{\mathbf{x}}^+_1(0)| \leqslant -\rho  |x_0-\mathbf{x}_1(s)|
	\qquad(s\in [0,s_\rho  ]\mbox{ a.e.})
\end{equation*}
and (a) follows.

The proof of (b) is similar: since  $\dot{\mathbf{x}}_2^+(0)=\dot{\mathbf{x}}_1^+(0)$ by condition (ii), for all  $t\in[0,T]$ and $s\in[0,T]$ we have  that 
\begin{equation}
\label{eq:inizio}
\mathbf{x}_2(t)-\mathbf{x}_1(s)=(t-s)\dot{\mathbf{x}}_1^+(0) +o_3(t)+o_3(s).
\end{equation}
Hence, for  all  $s,t\in(0,T]$  we deduce that 
\begin{equation*}
\Big|\frac{\mathbf{x}_2(t)-\mathbf{x}_1(s)}{t|\dot{\mathbf{x}}_1^+(0)|}- \frac{\dot{\mathbf{x}}^+_1(0)}{|\dot{\mathbf{x}}^+_1(0)|}\Big|\leqslant\,
\frac st+\frac{|o_3(t)|+|o_3(s)|}{t|\dot{\mathbf{x}}_1^+(0) |}.
\end{equation*}
So,
\begin{equation}
\label{eq:equiv_bis}
 |\mathbf{x}_2(t)-\mathbf{x}_1(s)|\leqslant t|\dot{\mathbf{x}}_1^+(0)|
 \Big(
 1+ \frac st+\frac{|o_3(t)|+|o_3(s)|}{t|\dot{\mathbf{x}}_1^+(0) |}\Big).
\end{equation}

Next,  take the scalar product of each side of \eqref{eq:inizio} with $\theta_1(s)$ to obtain
\begin{multline}
\label{s_t}
	\langle\mathbf{x}_2(t)-\mathbf{x}_1(s),\theta_1(s)\rangle
	=t\langle \dot{\mathbf{x}}_1^+(0),\theta_1(s)\rangle-\langle s\dot{\mathbf{x}}_1^+(0)-o_3(t)-o_3(s),\theta_1(s)\rangle
\\
=t|\dot{\mathbf{x}}_1^+(0)|+t \langle \dot{\mathbf{x}}_1^+(0),\theta_1(s)-\theta_1(0)\rangle
-\langle s\dot{\mathbf{x}}_1^+(0)-o_3(t)-o_3(s),\theta_1(s)\rangle
\end{multline}
for all  $t\in[0,T]$ and a.e. $s\in[0,T]$. 

Once again, having fixed $\rho \in(0,1)$,
we can find $\tau_\rho \in(0,T]$ satisfying the following: for all $t\in(0,\tau_\rho]$ there exists $\sigma_\rho (t)\in (0,T]$ such that
\begin{equation*}
t| \langle \dot{\mathbf{x}}_1^+(0),\theta_1(s)-\theta_1(0)\rangle|+
|\langle s\dot{\mathbf{x}}_1^+(0)-o_3(t)-o_3(s),\theta_1(s)\rangle|
\leqslant \frac{1-\rho }2 t|\dot{\mathbf{x}}_1^+(0)|
\end{equation*}
and
\begin{equation*}
1+ \frac st+\frac{|o_3(t)|+|o_3(s)|}{t|\dot{\mathbf{x}}_1^+(0) |}\leqslant \frac{1+\rho}{2\rho}
\end{equation*}
for all  $t\in[0,\tau_\rho ]$ and a.e. $s\in[0,\sigma_\rho (t)]$. Then, \eqref{eq:equiv_bis} leads directly to \eqref{eq:b1}.
 Moreover, returning to \eqref{s_t}, for all  $t\in[0,\tau_\rho ]$ and a.e. $s\in[0,\sigma_\rho (t)]$ we conclude that
\begin{equation*}
\langle\mathbf{x}_2(t)-\mathbf{x}_1(s),\theta_1(s)\rangle
\geqslant t|\dot{\mathbf{x}}_1^+(0)|-\frac{1-\rho }2 t|\dot{\mathbf{x}}_1^+(0)|
	=\frac{1+\rho }2 t|\dot{\mathbf{x}}_1^+(0)|
	\geqslant\rho|\mathbf{x}_2(t)-\mathbf{x}_1(s)|,
\end{equation*}
where we have used \eqref{eq:equiv_bis} to deduce the last inequality. Hence, \eqref{eq:b2} follows. 
\end{proof}

Given  a  semiconcave  solution $u$ of \eqref{eq:intro_HJ_local}, we hereafter concentrate on singular arcs for $u$, that is, arcs $\mathbf{x}\in\mbox{\rm Lip}_0(0,T;\Omega)$ such that $\mathbf{x}(t)\in\SING$ for all $t\in[0,T]$. We denote such a subset of $\mbox{\rm Lip}_0(0,T;\Omega)$ by $\mbox{\rm Lip}^u_0(0,T;\Omega)$.

\begin{Lem}\label{lem:superdiff}
Let $u$ be a  semiconcave  solution of \eqref{eq:intro_HJ_local}  and let $\mathbf{x}\in\mbox{\rm Lip}^u_0(0,T;\Omega)$  be such that $\dot{\mathbf{x}}^+(0)\neq 0$. Then there exists $T_0\in (0,T]$ such that  the set 
$$S_{\mathbf{x}}=\Big\{s\in[0,T_0]~\big|~D^+u(\mathbf{x}(s))=[p_s^1,p_s^2]\;\mbox{with}\; p_s^1, p_s^2\in D^*u(\mathbf{x}(s))\,,\;p_s^1\neq p_s^2\Big\}.$$ 
has full measure in $[0,T_0]$. Moreover, 	 $\lim_{s\to0^+}p_s^i=p_0^i$ with  $p_0^i\in D^*u(x_0)\;(i=1,2)$ and 
\begin{align}\label{perp}
	\langle \dot{\mathbf{x}}(s),p_s^2-p_s^1\rangle=0\quad\mbox{for a.e. } s\in [0,T_0]
\end{align}
\end{Lem}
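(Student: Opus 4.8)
The plan is to establish the three assertions separately, after first using Lemma~\ref{le:injectivity} to fix an interval $[0,T_0]$ on which $\mathbf{x}$ is injective (this uses $\dot{\mathbf{x}}^+(0)\neq 0$).

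\emph{Full measure of $S_{\mathbf{x}}$.} For each $s$ we have $\mathbf{x}(s)\in\SING$, so $D^+u(\mathbf{x}(s))$ is not a singleton and hence $\dim D^+u(\mathbf{x}(s))\in\{1,2\}$, i.e. $\mathbf{x}(s)\in\mbox{\rm Sing}_1(u)\cup\mbox{\rm Sing}_2(u)$. By Proposition~\ref{pro:rectifiability}, $\mbox{\rm Sing}_2(u)$ is countable, and since $\mathbf{x}$ is injective on $[0,T_0]$ the set $N:=\{s\in[0,T_0]:\mathbf{x}(s)\in\mbox{\rm Sing}_2(u)\}$ is countable, hence Lebesgue null. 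For $s\in[0,T_0]\setminus N$ the convex compact set $D^+u(\mathbf{x}(s))$ is one-dimensional, i.e. a nondegenerate segment $[p^1_s,p^2_s]$; by Proposition~\ref{basic_facts_of_superdifferential}(d), $D^+u(\mathbf{x}(s))=\mathrm{co}\,D^*u(\mathbf{x}(s))$, and the two (distinct) extreme points $p^1_s,p^2_s$ of this convex hull necessarily belong to $D^*u(\mathbf{x}(s))$. Thus $[0,T_0]\setminus N\subseteq S_{\mathbf{x}}$ and $S_{\mathbf{x}}$ has full measure.

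\emph{The orthogonality relation.} Put $g(s)=u(\mathbf{x}(s))$, a Lipschitz function, and fix $s$ in the full measure set where $g$ and $\mathbf{x}$ are both differentiable and $s\in S_{\mathbf{x}}$. Applying Proposition~\ref{criterion-Du_semiconcave} at $\mathbf{x}(s)$ with $p=p^i_s$ $(i=1,2)$ and $y=\mathbf{x}(s+h)$ gives
\begin{equation*}
	g(s+h)-g(s)\leqslant\langle p^i_s,\mathbf{x}(s+h)-\mathbf{x}(s)\rangle+\tfrac{C}{2}|\mathbf{x}(s+h)-\mathbf{x}(s)|^2 .
\end{equation*}
Dividing by $h$ and letting $h\to0^+$, then $h\to0^-$ (the quadratic term being $O(h^2)$), yields $\dot g(s)\leqslant\langle p^i_s,\dot{\mathbf{x}}(s)\rangle$ and $\dot g(s)\geqslant\langle p^i_s,\dot{\mathbf{x}}(s)\rangle$ respectively, so $\dot g(s)=\langle p^i_s,\dot{\mathbf{x}}(s)\rangle$ for $i=1,2$. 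Subtracting the two identities gives \eqref{perp}.

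\emph{Convergence of $p^i_s$.} Since $u$ is Lipschitz, $\{p^i_s\}$ lies in a fixed ball, so it is precompact; if $s_k\downarrow 0$ within $S_{\mathbf{x}}$ and $p^i_{s_k}\to q$, then, because $\mathbf{x}(s_k)\to x_0$ and $p^i_{s_k}\in D^*u(\mathbf{x}(s_k))$, a diagonal argument on the approximating sequences of differentiability points shows $q\in D^*u(x_0)$; thus every limit point lies in $D^*u(x_0)$. To promote this to genuine convergence I would use the structure available in $\R^2$: since $u$ solves \eqref{eq:intro_HJ_local} and $H$ is strictly convex in $p$, one has $D^*u(y)\subseteq\{p:H(y,p)=0\}$, a strictly convex curve, so $p^1_s,p^2_s$ lie on $\{H(\mathbf{x}(s),\cdot)=0\}$; by \eqref{perp} together with $\dot{\mathbf{x}}(s)\to\dot{\mathbf{x}}^+(0)\neq 0$ (via \eqref{right-continuity}) the chord $[p^1_s,p^2_s]$ has limiting direction orthogonal to $\dot{\mathbf{x}}^+(0)$, and by upper semicontinuity of $D^+u$ every Hausdorff limit of these chords is a chord of $\{H(x_0,\cdot)=0\}$, contained in $D^+u(x_0)$, orthogonal to $\dot{\mathbf{x}}^+(0)$, with both endpoints in $D^*u(x_0)$. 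One can further pin its location by noting that semiconcavity and upper semicontinuity of $D^+u$ force $g$ to be right differentiable at $0$ with $\dot g^+(0)=\min\{\langle q,\dot{\mathbf{x}}^+(0)\rangle:q\in D^+u(x_0)\}$.

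\emph{Expected main obstacle.} Neither the measure-theoretic claim nor \eqref{perp} is problematic; the real difficulty is the uniqueness of the limiting chord, i.e. ruling out oscillation of the endpoints $p^i_s$ as $s\downarrow 0$. I expect this to require a finer local analysis of $\mathbf{x}$ near $x_0$: combining the value of $\dot g^+(0)$ above to control the level at which the limiting chord sits, the fact that $D^*u(\mathbf{x}(s))$ consists of exactly the two points $p^1_s,p^2_s$ (being the intersection of a line with a strictly convex curve), and the fact that $\dot{\mathbf{x}}^+(0)\neq 0$ forces $\mathbf{x}$ to reach $x_0$ along a single direction. This is the step where the two-dimensional hypothesis is genuinely exploited.
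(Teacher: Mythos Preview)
Your arguments for the full-measure claim and for the orthogonality relation \eqref{perp} are correct and essentially unpack what the paper's proof---which consists only of references to Proposition~\ref{pro:rectifiability} and to Proposition~3.3.15 in \cite{Cannarsa_Sinestrari_book}---would yield. The use of injectivity from Lemma~\ref{le:injectivity} to pull the countable set $\mbox{\rm Sing}_2(u)$ back to a null set of times is exactly the right mechanism, and your derivation of \eqref{perp} directly from the semiconcavity inequality (Proposition~\ref{criterion-Du_semiconcave}) is a clean elementary substitute for the directional-derivative machinery in the book.

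The genuine gap is the convergence $p^i_s\to p^i_0$, which you leave as a sketch. Here the paper simply invokes Proposition~3.3.15 of \cite{Cannarsa_Sinestrari_book}: for any semiconcave $u$, if $y\to x$ with $(y-x)/|y-x|\to\theta\neq 0$, then $D^+u(y)$ Hausdorff-converges to the exposed face $\{p\in D^+u(x):\langle p,\theta\rangle=\min_{q\in D^+u(x)}\langle q,\theta\rangle\}$. Applied with $y=\mathbf{x}(s)$ and $\theta=\dot{\mathbf{x}}^+(0)/|\dot{\mathbf{x}}^+(0)|$ (using \eqref{right-continuity} to control the direction of approach), this forces the segments $[p^1_s,p^2_s]$ to converge to a fixed segment $[p^1_0,p^2_0]\subset D^+u(x_0)$; its endpoints, being extreme points of a face of $\mathrm{co}\,D^*u(x_0)$, lie in $D^*u(x_0)$. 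This settles the oscillation issue in one stroke and uses only semiconcavity. Your alternative route via the strictly convex level curve $\{H(x_0,\cdot)=0\}$ together with the value $\dot g^+(0)$ would also pin down the limiting chord, but it is more circuitous, needs the equation \eqref{eq:intro_HJ_local}, and still requires a consistent labeling of $p^1_s,p^2_s$ (e.g.\ by the sign of $\langle p^2_s-p^1_s,\theta^\perp\rangle$ for a fixed unit normal $\theta^\perp$ to $\dot{\mathbf{x}}^+(0)$) before one can even speak of a limit for each branch.
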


\begin{proof}
 The structure of the superdifferential of $u$ along $\mathbf{x}$ is described by Proposition \ref{pro:rectifiability} and Proposition 3.3.15 in \cite{Cannarsa_Sinestrari_book}. 
\end{proof}
 
 \begin{Lem}\label{lem:calibrated}
Let $u$ be a  semiconcave  solution of \eqref{eq:intro_HJ_local} and let  $x_0\in \SING$ be such that $0\not\in H_p(x_0,D^+u(x_0))$. Let $\mathbf{x}\in\mbox{\rm Lip}_0^u(0,T;\Omega)$  be such that $\mathbf{x}(0)=x_0$ and
  \begin{align*}
		\dot{\mathbf{x}}^+(0)=H_p(x_0,p_0)\quad\text{where}\ p_0=\arg\min\{H(x_0,p): p\in D^+u(x_0)\}.
	\end{align*}
Let $T_0\in (0,T]$ be given by Lemma~\ref{lem:superdiff} and, for every $s\in S_\mathbf{x}$, let $\xi^1_s$ and  $\xi^2_s$ 
be  backward calibrated curves 
on $(-\infty,0]$  satisfying
\begin{equation}
\label{eq:calib1}
\xi_s^i(0)=\mathbf{x}(s)\quad\mbox{ and }\quad 
\dot{\xi}_s^i(0)=H_p(\mathbf{x}(s), p^i_{s})  \qquad(i=1,2)
\end{equation}
Then there exist constants  $r_1>0$, $s_1\in(0,T_0]$, and  $\delta\in(0,1)$ and such that  
\begin{equation}
\label{eq:calib2}
|\mathbf{x}(s)-\xi^i_s(-r)|\geqslant \delta r \qquad(i=1,2)
\end{equation}
and, for all $s\in [0,s_1]\cap S_\mathbf{x}$ and $r\in [0,r_1]$,
\begin{equation}
\label{eq:calib3}
\xi^1_s(-r)\in C^+_\delta (\mathbf{x}(s),\theta_2(s))\quad\mbox{and}\quad  \xi^2_s(-r)\in C^-_\delta (\mathbf{x}(s),\theta_2(s))
\end{equation}
where 
\begin{equation*}
\theta_2(s)=\frac{p^2_s-p^1_s}{|p^2_s-p^1_s|}\qquad(s\in S_\mathbf{x}).
\end{equation*}
\end{Lem}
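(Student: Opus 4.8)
\textbf{Proof plan for Lemma~\ref{lem:calibrated}.}
The strategy is to exploit the energy-minimization property \eqref{eq:KS_energy} together with the fact that $p_0=\arg\min\{H(x_0,p):p\in D^+u(x_0)\}$ lies in the \emph{interior} of the segment $D^+u(x_0)=[p_0^1,p_0^2]$ (this is where the noncriticality $0\notin H_p(x_0,D^+u(x_0))$ is used: if $p_0$ were an endpoint then $H_p(x_0,p_0)$ could vanish or fail to separate the two calibrated directions). Since $H$ is strictly convex in $p$, the two ``outer'' velocities $\dot{\xi}_s^i(0)=H_p(\mathbf{x}(s),p_s^i)$ point to opposite sides of the hyperplane through $\mathbf{x}(s)$ orthogonal to $\dot{\mathbf{x}}^+(0)=H_p(x_0,p_0)$; more precisely, I would first prove the strict inequalities
\begin{align*}
\langle H_p(x_0,p_0^1),\,p_0^2-p_0^1\rangle<0<\langle H_p(x_0,p_0^2),\,p_0^2-p_0^1\rangle,
\end{align*}
which follow from strict monotonicity of $p\mapsto H_p(x_0,p)$ along the segment $[p_0^1,p_0^2]$, and then from strict convexity again deduce that the angle between $H_p(x_0,p_0^i)$ and $\pm\theta_2(0)$ (where $\theta_2(0)=(p_0^2-p_0^1)/|p_0^2-p_0^1|$) is strictly acute — say bounded by $\arccos\delta_0$ for some $\delta_0\in(0,1)$.

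Next I would pass from $s=0$ to small $s\in S_\mathbf{x}\cap[0,s_1]$ by continuity. By Lemma~\ref{lem:superdiff}, $p_s^i\to p_0^i$ as $s\to0^+$, hence $\theta_2(s)\to\theta_2(0)$ and $H_p(\mathbf{x}(s),p_s^i)\to H_p(x_0,p_0^i)$ (using continuity of $H_p$ and $\mathbf{x}(s)\to x_0$). Therefore there are $s_1\in(0,T_0]$ and $\delta_1\in(0,1)$ so that the initial velocity $\dot{\xi}_s^1(0)$ lies in the interior of the cone $C^+_{\delta_1}(0,\theta_2(s))$ of directions and $\dot{\xi}_s^2(0)$ in the interior of $C^-_{\delta_1}(0,\theta_2(s))$, uniformly in $s\in S_\mathbf{x}\cap[0,s_1]$. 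Since each $\xi_s^i$ is a $C^2$ backward calibrated curve with $|\dot{\xi}_s^i(0)|$ bounded and bounded away from $0$ (because $0\notin H_p(x_0,D^+u(x_0))$ and, by upper semicontinuity of $D^+u$, also $0\notin H_p(x,D^+u(x))$ for $x$ near $x_0$), a Taylor expansion $\xi_s^i(-r)=\mathbf{x}(s)-r\dot{\xi}_s^i(0)+O(r^2)$ with the $O(r^2)$ uniform in $s$ (the curves $\xi_s^i$ solve the Euler–Lagrange flow with data in a compact set) gives both the non-degeneracy estimate \eqref{eq:calib2} — with $\delta$ a fraction of the uniform lower bound on $|\dot{\xi}_s^i(0)|$ — and the cone membership \eqref{eq:calib3}, after shrinking $\delta$ and choosing $r_1>0$ small enough to absorb the quadratic remainder into the conical margin $\delta_1$.

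The main obstacle is obtaining the \emph{uniformity in $s$} of the second-order remainder in the expansion of $\xi_s^i(-r)$, and more basically ensuring that the calibrated curves $\xi_s^i$ are defined and $C^2$ on a common backward interval $[-r_1,0]$ with derivatives controlled independently of $s$. This requires that the initial data $(\mathbf{x}(s),\dot{\xi}_s^i(0))$ for the Euler–Lagrange flow range over a compact subset of the tangent bundle as $s$ varies in $S_\mathbf{x}\cap[0,s_1]$ — which holds once $s_1$ is small, by Lemma~\ref{lem:superdiff} and continuity of $H_p$ — so that the flow map is equi-$C^1$ there and the Gronwall-type bounds are uniform. The separation of the two initial velocities across the hyperplane $\{y:\langle y-\mathbf{x}(s),\theta_2(s)\rangle=0\}$ is the structural heart of the argument and is exactly what noncriticality buys us; once that is in place, \eqref{eq:calib2} and \eqref{eq:calib3} are routine consequences of the first-order Taylor expansion with uniform remainder.
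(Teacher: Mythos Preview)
Your overall approach --- Taylor-expand the calibrated curves about $r=0$, establish the sign separation $\langle H_p(x_0,p_0^1),p_0^2-p_0^1\rangle<0<\langle H_p(x_0,p_0^2),p_0^2-p_0^1\rangle$, pass to small $s$ by continuity, and absorb the uniform remainder into the conical margin --- is exactly the route the paper takes. But there is a genuine gap in how you justify the sign separation itself.

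You claim these strict inequalities follow from $p_0$ lying in the \emph{interior} of $[p_0^1,p_0^2]$, and you attribute that interiority to the noncriticality hypothesis $0\notin H_p(x_0,D^+u(x_0))$. This inference is wrong: noncriticality only says $H_p(x_0,p)\neq 0$ for every $p\in D^+u(x_0)$, and nothing prevents $p_0=p_0^1$ with $H_p(x_0,p_0^1)$ nonzero yet orthogonal to $p_0^2-p_0^1$ (so the first-order condition $\langle H_p(x_0,p_0^1),p_0^2-p_0^1\rangle\geq 0$ holds with equality). Strict monotonicity of $p\mapsto\langle H_p(x_0,p),p_0^2-p_0^1\rangle$ along the segment gives only that the two endpoint values are \emph{ordered}, not that they have opposite signs. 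The missing ingredient is the one the paper uses: since $p_0^i\in D^*u(x_0)$ are reachable gradients, the equation \eqref{eq:intro_HJ_local} forces $H(x_0,p_0^1)=H(x_0,p_0^2)=0$, and then strict convexity gives
\[
\langle H_p(x_0,p_0^2),p_0^2-p_0^1\rangle\geq H(x_0,p_0^2)-H(x_0,p_0^1)+\nu|p_0^2-p_0^1|^2=\nu|p_0^2-p_0^1|^2>0,
\]
with the symmetric inequality at $p_0^1$. (As a by-product this does show $p_0$ is interior, but the reason is the Hamilton--Jacobi equation at the extreme points of $D^+u$, not noncriticality.) Noncriticality is used in the paper exactly where you also use it correctly: to bound $|\dot\xi_s^i(0)|=|H_p(\mathbf x(s),p_s^i)|$ away from zero uniformly in $s$, which yields \eqref{eq:calib2}. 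Once you replace your justification for the sign separation by the argument via $H(x_0,p_0^1)=H(x_0,p_0^2)=0$, the rest of your plan --- including the uniformity of the Taylor remainder coming from compactness of initial data for the Euler--Lagrange flow --- is sound and essentially identical to the paper's proof.
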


\begin{proof}
 The existence of backward calibrated curves satisfying \eqref{eq:calib1} follows from Proposition~\ref{reachable_grad_and_backward}.
 Moreover, for all $r\geqslant 0$ we have that 
\begin{equation}
\label{epistassi}
\mathbf{x}(s)-\xi^i_s(-r)=\xi^i_s(0)-\xi^i_s(-r)=r\dot{\xi}^i(0)+o(r)= rH_p(\mathbf{x}(s), p^i_{s})+o(r) \quad(i=1,2)
\end{equation}
where  $\lim_{r\to0^+}o(r)/r=0$ uniformly with respect to $s\in S_\mathbf{x}$. 

Now, observe that, since $x_0$ is not a critical point with respect to $(u,H)$, by possibly reducing $T_0$ we have that $\mathbf{x}(s)$ is also not a critical point for all $s\in[0,T_0]$ due to the upper-semicontinuity of the set-valued map $s\rightrightarrows H_p(\mathbf{x}(s),D^+u(\mathbf{x}(s)))$.
So,  for some $r_0>0$,  $s_0\in(0,T_1]$, and $\delta_0\in (0,1)$,
we deduce that
\begin{equation}
\label{eq:twocones}
\frac r{\delta_0}\geqslant |\mathbf{x}(s)-\xi^i_s(-r)|=r|H_p(\mathbf{x}(s), p^i_{s})|+o(r)\geqslant \delta_0 r\quad(i=1,2)
\end{equation}
for all $s\in [0,s_0]\cap S_\mathbf{x}$ and $r\in [0,r_0]$. This proves \eqref{eq:calib3}.

Next, recall that $H(x_0,p^i_0)=0$ because $p_0^i\in D^*u(x_0)\;(i=1,2)$. So,
by the strict convexity of $H(x_0,\cdot)$, we deduce that there exists $\nu>0$ such that
\begin{align*}
	\langle H_p(x_0,p^2_0),p^2_0-p^1_0\rangle\geqslant&\,H(x_0,p^2_0)-H(x_0,p^1_0)+\nu|p^2_0-p^1_0|^2=\nu|p^2_0-p^1_0|^2>0\\
	\langle H_p(x_0,p^1_0),p^2_0-p^1_0\rangle\leqslant&\,H(x_0,p^2_0)-H(x_0,p^1_0)-\nu|p^2_0-p^1_0|^2=-\nu|p^2_0-p^1_0|^2<0
\end{align*}
Hence, the upper-semicontinuity of the set-valued map $s\rightrightarrows H_p(\mathbf{x}(s),D^+u(\mathbf{x}(s)))$ ensures the existence of  numbers $\delta_1\in(0,1)$  and $s_1\in (0,s_0]$ such that
\begin{equation}\label{eq:2cones}
	\langle H_p(\mathbf{x}(s),p^2_s),\theta_2(s)\rangle\geqslant\delta_1,\quad\langle H_p(\mathbf{x}(s),p^1_s),\theta_2(s)\rangle\leqslant-\delta_1
	\quad\forall s\in[0,s_1]\cap S.
\end{equation}
Therefore, combining \eqref{epistassi} and  \eqref{eq:2cones}, we conclude that, after possibly replacing $r_0$ by a smaller nummber $r_1>0$,
\begin{equation*}
\langle \xi_s(-r)-\mathbf{x}(s),\theta_2(s)\rangle=-r\langle H_p(\mathbf{x}(s),p^1_s),\theta_2(s)\rangle+o(r)
\geqslant r\delta_1+o(r)\geqslant r\frac{\delta_1}2
\end{equation*}
for all $s\in [0,s_1]\cap S_\mathbf{x}$ and $r\in [0,r_1]$. By \eqref{eq:twocones}  and the above inequality we have that  $\xi^1_s(-r)\in C^+_\delta (\mathbf{x}(s),\theta_2(s))$
with $\delta= \delta_0\delta_1/2$. 

The analogous statement  for $\xi^2_s$ in \eqref{eq:calib3} can be proved  by a similar argument.
\end{proof}

We are now ready to state our main result, which ensures that singular curves coincide up to a bi-Lipschitz reparameterization, at least when $x$ is not a critical point.

\begin{The}\label{thm:reparametrization}
Let $u$ be a  semiconcave  solution of \eqref{eq:intro_HJ_local} and let  $x_0\in \SING$ be such that $0\not\in H_p(x_0,D^+u(x_0))$. Let $\mathbf{x}_j\in\mbox{\rm Lip}_0^u(0,T;\Omega)$  ($j=1,2$)  be such that $\mathbf{x}_j(0)=x_0$ and
  \begin{align*}
		\dot{\mathbf{x}}_j^+(0)=H_p(x_0,p_0)\mbox{ where } p_0=\arg\min\{H(x_0,p): p\in D^+u(x_0)\}.
	\end{align*}
Then, there exists $\sigma\in(0,T]$ such that there exists a unique  bi-Lipschitz homeomorphism $$\phi:[0,\sigma]\to[0,\phi(\sigma)]\subset [0,T_2]$$ satisfying  $\mathbf{x}_1(s)=\mathbf{x}_2(\phi(s))$ for all $s\in[0,\sigma]$. 
\end{The}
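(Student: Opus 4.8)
The plan is to prove the theorem in three steps: (1) show that the two singular arcs have the same image in a neighbourhood of $x_0$; (2) use the injectivity of the arcs together with the quasi-isometry estimate \eqref{eq:X_phi} to build the bi-Lipschitz reparameterization; (3) read off uniqueness. Step (1) is the heart of the matter and is where the two-dimensional hypothesis is essential.

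For step (1), fix $\rho\in(0,1)$ and apply Lemma~\ref{le:injectivity}, Lemma~\ref{lem:cone}, Lemma~\ref{lem:superdiff} and Lemma~\ref{lem:calibrated} to $\mathbf{x}_1$ (and symmetrically to $\mathbf{x}_2$): on a small interval $[0,T_0]$ both arcs are injective; for a.e.\ $s$ the set $D^+u(\mathbf{x}_1(s))$ is a nondegenerate segment $[p_s^1,p_s^2]$ with $p_s^i\in D^*u(\mathbf{x}_1(s))$ and $\dot{\mathbf{x}}_1(s)\perp(p_s^2-p_s^1)$; and the two backward calibrated curves $\xi_s^1,\xi_s^2$ issuing from $\mathbf{x}_1(s)$ lie in the opposite cones $C^{\pm}_\delta(\mathbf{x}_1(s),\theta_2(s))$, transversal to $\mathbf{x}_1$ since $\theta_2(s)\perp\theta_1(s)$. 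Choosing $\rho$ close enough to $1$, the forward cone $C^+_\rho(\mathbf{x}_1(s),\theta_1(s))$ in which $\mathbf{x}_2(t)$ is trapped by Lemma~\ref{lem:cone}(b) is disjoint, away from the vertex, from both $C^{\pm}_\delta(\mathbf{x}_1(s),\theta_2(s))$; hence, for $t$ small, if we assume by contradiction $\mathbf{x}_2(t)\notin\mathbf{x}_1([0,T_0])$, then $\mathbf{x}_2(t)$ lies neither on $\mathbf{x}_1$ nor on any whisker $\xi_s^i([-r_1,0])$ with $s$ in a range comparable to $t$. Now I would invoke the local structure of the singular set of $u$ in the plane (Proposition~\ref{pro:rectifiability} together with Proposition~3.3.15 of \cite{Cannarsa_Sinestrari_book}): near $\mathbf{x}_1(\bar s)$, for a suitable $\bar s\in S_{\mathbf{x}_1}$ of order $t$, a whole neighbourhood is exhausted by the arc $\mathbf{x}_1$ and the transversal whiskers $\xi_s^i$, so every point off $\mathbf{x}_1$ there lies on some $\xi_s^i$ at a strictly negative time and is therefore a point of differentiability of $u$. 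To rule out the remaining possibility that $\mathbf{x}_2(t)$ slips through the exceptional ``a.e.'' set, I use that $\mathbf{x}_2(t)\in S_{\mathbf{x}_2}$ for a.e.\ $t$, so $\mathbf{x}_2(t)$ itself carries a backward calibrated curve $\eta_t$; by the non-crossing property (two backward calibrated curves meet only at their common endpoint, since along them $u$ is differentiable at negative times), $\eta_t$ avoids $\mathbf{x}_1$ and all the $\xi_s^i$ at negative times, which is impossible once those fill a neighbourhood. Hence $\mathbf{x}_2(t)\in\mathbf{x}_1([0,T_0])$; exchanging the roles of the two arcs, the images coincide: $\mathbf{x}_1([0,\sigma])=\mathbf{x}_2([0,\sigma''])$ for some $\sigma,\sigma''>0$.

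For step (2), since $\mathbf{x}_2$ is injective on $[0,T_0]$ (Lemma~\ref{le:injectivity}) and the images coincide, $\phi(s):=\mathbf{x}_2^{-1}(\mathbf{x}_1(s))$ is well defined on a small interval $[0,\sigma]$ with $\phi(0)=0$. Comparing \eqref{eq:X_phi} for $\mathbf{x}_1$ and for $\mathbf{x}_2$ at the points $\phi(s),\phi(s')$ gives
\begin{multline*}
|s-s'|\,|\dot{\mathbf{x}}_1^+(0)|\,(1+\varepsilon(\sigma))=|\mathbf{x}_1(s)-\mathbf{x}_1(s')|\\
=|\mathbf{x}_2(\phi(s))-\mathbf{x}_2(\phi(s'))|=|\phi(s)-\phi(s')|\,|\dot{\mathbf{x}}_2^+(0)|\,(1+\varepsilon(\sigma)),
\end{multline*}
where $\varepsilon(\sigma)\to0$ as $\sigma\to0$ by \eqref{right-continuity}; since $|\dot{\mathbf{x}}_1^+(0)|=|\dot{\mathbf{x}}_2^+(0)|=|H_p(x_0,p_0)|\neq0$ (as $x_0$ is noncritical), this forces $\tfrac12|s-s'|\leqslant|\phi(s)-\phi(s')|\leqslant2|s-s'|$ on a possibly smaller interval. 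Thus $\phi$ is bi-Lipschitz, hence continuous and injective; starting at $0$ it is increasing, so it is a homeomorphism of $[0,\sigma]$ onto $[0,\phi(\sigma)]$. For step (3), if $\psi$ were another such homeomorphism then $\mathbf{x}_2(\psi(s))=\mathbf{x}_2(\phi(s))$, and the injectivity of $\mathbf{x}_2$ gives $\psi=\phi$ after shrinking $\sigma$ so both take values in the injectivity interval.

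\textbf{Main obstacle.} The delicate point is the geometric/topological core of step (1): passing from the cone estimates to the assertion that $\mathbf{x}_2(t)$ actually lands on the trace of $\mathbf{x}_1$. This needs a genuinely planar argument — essentially that the singular arc together with the transversal calibrated whiskers sweeps out a full neighbourhood of $\mathbf{x}_1(\bar s)$, so that the only singularities nearby are those of the arc — and a careful handling of the ``almost everywhere'' nature of Lemmas~\ref{lem:superdiff}--\ref{lem:calibrated}, for which the non-crossing of calibrated curves and the fact that $\mathbf{x}_2$ is itself a singular arc must be combined. Everything else (injectivity, the reparameterization estimate, uniqueness) is routine given the lemmas already established.
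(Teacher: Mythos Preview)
Your steps (2) and (3) are essentially the paper's own argument and are fine. The gap is in step (1), precisely at the point you yourself flag as the main obstacle, but the planar mechanism you propose is not the right one.

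You try to reach a contradiction by asserting that ``a whole neighbourhood is exhausted by the arc $\mathbf{x}_1$ and the transversal whiskers $\xi_s^i$,'' so that any point off $\mathbf{x}_1$ must be regular. This exhaustion claim does not follow from Proposition~\ref{pro:rectifiability} or from Proposition~3.3.15 of \cite{Cannarsa_Sinestrari_book}: rectifiability says only that $\SING$ is small, not that the complement of $\mathbf{x}_1$ near $\mathbf{x}_1(\bar s)$ is swept by the calibrated curves. Proving such a foliation statement would itself be at least as hard as the theorem, and your backup argument with the backward calibrated curve $\eta_t$ from $\mathbf{x}_2(t)$ does not close the gap either (non-crossing of calibrated curves does not by itself force $\eta_t$ to hit $\mathbf{x}_1$).

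The paper's step (1) avoids any exhaustion claim. It is a \emph{barrier/crossing} argument, done for each fixed $s$ in the full-measure set $S_{\mathbf{x}_1}$. One looks at the whole curve $\mathbf{x}_2|_{[0,\tau_\rho]}$ rather than a single point $\mathbf{x}_2(t)$: by Lemma~\ref{lem:cone}, this curve joins $x_0\in C^-_\rho(\mathbf{x}_1(s),\theta_1(s))$ to $\mathbf{x}_2(\tau_\rho)\in C^+_\rho(\mathbf{x}_1(s),\theta_1(s))$ while staying in the ball $B(\mathbf{x}_1(s),\delta r_1)$. The two whiskers $\xi_s^1,\xi_s^2$ lie in the transversal cones $C^\pm_\delta(\mathbf{x}_1(s),\theta_2(s))$ and, by \eqref{eq:calib2}, reach the boundary of that ball; since $\rho$ is chosen so that $\rho^2+\delta^2>1$, the cones $C_\rho$ and $C_\delta$ meet only at the vertex, so the union $\xi_s^1([-r_1,0])\cup\xi_s^2([-r_1,0])$ separates the ball into two components, one containing $C^-_\rho\cap B$ and the other $C^+_\rho\cap B$. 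Hence the continuous arc $\mathbf{x}_2$ must cross one of the whiskers. Since $u$ is differentiable along $\xi_s^i((-\infty,0))$ but $\mathbf{x}_2$ is singular, the crossing can only occur at the common endpoint $\xi_s^i(0)=\mathbf{x}_1(s)$. Injectivity of $\mathbf{x}_2$ then gives a unique $t_s$ with $\mathbf{x}_2(t_s)=\mathbf{x}_1(s)$; the extension from a.e.\ $s$ to all $s$ is by continuity. This is Lemma~\ref{lem:cross} in the paper, and it feeds directly into your step (2).
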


We begin the proof with the following lemma.

\begin{Lem}\label{lem:cross}
Under all assumptions of Theorem~\ref{thm:reparametrization}, there exists $\sigma\in(0,T]$ such that for all $s\in[0,\sigma]$ there exists  a unique $t_s\in[0,T]$ satisfying $\mathbf{x}_2(t_s)=\mathbf{x}_1(s)$.
\end{Lem}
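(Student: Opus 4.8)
\textbf{Proof plan for Lemma~\ref{lem:cross}.}

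The plan is to use the geometric "cone/crossing" picture built up in Lemmas~\ref{lem:cone}, \ref{lem:superdiff}, and \ref{lem:calibrated}, together with a connectedness/intermediate-value argument on the singular set. The idea is that, for small $s$, the singular arc $\mathbf{x}_1$ issues from $x_0$ together with two backward calibrated curves $\xi^1_s,\xi^2_s$ which, by Lemma~\ref{lem:calibrated}, lie on opposite sides of $\mathbf{x}_1(s)$ inside the two opposite cones $C^+_\delta(\mathbf{x}_1(s),\theta_2(s))$ and $C^-_\delta(\mathbf{x}_1(s),\theta_2(s))$, where $\theta_2(s)$ is (up to sign) orthogonal to $\dot{\mathbf{x}}_1(s)$ by the perpendicularity relation \eqref{perp}. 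Thus near $\mathbf{x}_1(s)$ the full trajectory $\xi^1_s\cup\{\mathbf{x}_1(s)\}\cup\xi^2_s$ is a Lipschitz curve that is transversal to $\mathbf{x}_1$ itself, and it separates a small neighborhood of $\mathbf{x}_1(s)$ into two pieces; the key point is that $u$ is differentiable off $\SING$, so the reachable gradient picks out which "side" a regular point lies on, and $\mathbf{x}_2(t)$ cannot cross this separating curve without meeting $\SING$.

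First I would fix $\rho=\delta$ (with $\delta$ from Lemma~\ref{lem:calibrated}) and apply Lemma~\ref{lem:cone} with the roles of $\mathbf{x}_1,\mathbf{x}_2$ to get, for all small $t$, that $\mathbf{x}_2(t)$ lies in the cone $C^+_\rho(\mathbf{x}_1(s),\theta_1(s))$ for a.e.\ small $s$, i.e.\ $\mathbf{x}_2(t)$ sits in the "forward" direction relative to the tangent line of $\mathbf{x}_1$ at $\mathbf{x}_1(s)$ and at controlled distance $\leqslant\frac{1+\rho}{2\rho}t|\dot{\mathbf{x}}_1^+(0)|$. Next, for the crossing itself, I would consider the continuous function
$$
s\longmapsto \big\langle \mathbf{x}_2(t)-\mathbf{x}_1(s),\ \theta_2(s)\big\rangle
$$
(suitably redefining $\theta_2$ by continuity at the measure-zero set where $D^+u$ is not a segment, using $\lim_{s\to0^+}p^i_s=p^i_0$ from Lemma~\ref{lem:superdiff}). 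Using \eqref{eq:inizio}-type expansions and the fact that $\theta_2(0)\perp\dot{\mathbf{x}}_1^+(0)$, this function is $o(t)$-small at $s=0$, while its sign for $s$ comparable to $t$ is governed by whether $\mathbf{x}_2(t)$ is on the $C^+_\delta$ or $C^-_\delta$ side; combined with the cone estimate this forces a sign change, hence a zero $s=s(t)$, which together with the distance bound \eqref{eq:b1} and the injectivity of $\mathbf{x}_1$ on $[0,T_0]$ (Lemma~\ref{le:injectivity}) pins down a crossing point. Symmetrically, running the argument with $\mathbf{x}_1,\mathbf{x}_2$ interchanged and the cone data from Lemma~\ref{lem:calibrated} applied to $\mathbf{x}_2$ produces, for each small $s$, a parameter $t_s$ with $\mathbf{x}_2(t_s)=\mathbf{x}_1(s)$; uniqueness of $t_s$ follows from injectivity of $\mathbf{x}_2$ on its own interval $[0,T_0]$ via Lemma~\ref{le:injectivity}.

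The delicate point, which I expect to be the main obstacle, is turning the "$\mathbf{x}_2(t)$ cannot cross the separating curve without hitting $\SING$" intuition into a rigorous argument, because the separating set $\xi^1_s\cup\{\mathbf{x}_1(s)\}\cup\xi^2_s$ varies with $s$ and is only Lipschitz, and because $\mathbf{x}_1,\mathbf{x}_2$ are only Lipschitz (so the relevant geometric identities hold only a.e.). The clean way around this is to avoid topological separation arguments and instead extract the crossing analytically: show that, for $t$ small, the map $s\mapsto \pi(s):=\langle \mathbf{x}_2(t)-\mathbf{x}_1(s),\theta_1(s)\rangle$ (tangential component) is, up to $o(t)$, equal to $(t-s)|\dot{\mathbf{x}}_1^+(0)|$, hence strictly decreasing in $s$ on the relevant range and vanishing at a unique $s=t_s^{-1}$-type value, while the normal component stays $o(t)$; then \eqref{eq:b1} forces $|\mathbf{x}_2(t)-\mathbf{x}_1(s)|$ to be small precisely there. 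Continuity in $t$ of the resulting correspondence, and swapping the two curves, then yields the lemma. One must also carefully choose $\sigma$ smaller than all of $T_0$, $s_1$, $\tau_\rho$, and the various radii from the preceding lemmas so that every estimate is simultaneously in force.
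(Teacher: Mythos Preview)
Your first paragraph captures the correct intuition, and it is essentially the paper's argument: the calibrated curves $\xi^1_s,\xi^2_s$ form a barrier through $\mathbf{x}_1(s)$ that the \emph{singular} arc $\mathbf{x}_2$ cannot cross except at the single singular point $\mathbf{x}_1(s)=\xi^i_s(0)$. You then abandon this in favor of an ``analytical'' extraction, and that is where the gap lies.

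The analytical argument you outline only shows that for each small $t$ there is some $s$ with $\langle\mathbf{x}_2(t)-\mathbf{x}_1(s),\theta_1(s)\rangle=0$ and $\langle\mathbf{x}_2(t)-\mathbf{x}_1(s),\theta_2(s)\rangle=o(t)$. But $o(t)$ is not zero: two Lipschitz arcs in $\R^2$ from the same point with the same initial tangent need not meet again, and nothing in your monotonicity-of-$\pi(s)$ reasoning uses that $\mathbf{x}_2$ is singular. Swapping the roles of $\mathbf{x}_1,\mathbf{x}_2$ and invoking continuity cannot manufacture an exact intersection from an approximate one. So the proposed ``clean'' route does not close.

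The paper makes the topological separation rigorous as follows. First, one does \emph{not} take $\rho=\delta$; one chooses $\rho\in(\sqrt{1-\delta^2},1)$ (concretely $\rho=\tfrac{1+\sqrt{1-\delta^2}}{2}$), so that $\rho^2+\delta^2>1$. Since $\theta_1(s)\perp\theta_2(s)$ by \eqref{perp}, this forces
\[
C_\rho(\mathbf{x}_1(s),\theta_1(s))\cap C_\delta(\mathbf{x}_1(s),\theta_2(s))=\{\mathbf{x}_1(s)\}.
\]
Now $\mathbf{x}_2|_{[0,\tau_\rho]}$ joins $x_0\in C^-_\rho(\mathbf{x}_1(s),\theta_1(s))$ to $\mathbf{x}_2(\tau_\rho)\in C^+_\rho(\mathbf{x}_1(s),\theta_1(s))$ while staying inside the ball $B(\mathbf{x}_1(s),\delta r_1)$ (using \eqref{eq:b1} and the choice $\tfrac{1+\rho}{2\rho}\tau_\rho|\dot{\mathbf{x}}_1^+(0)|<\delta r_1$). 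By \eqref{eq:calib2} and \eqref{eq:calib3}, the curves $\xi^1_s,\xi^2_s$ fill out a connected set in $C_\delta(\mathbf{x}_1(s),\theta_2(s))$ that reaches the boundary of this ball on both sides, hence separates $C^-_\rho$ from $C^+_\rho$ within the ball. Therefore $\mathbf{x}_2$ must meet $\xi^1_s\cup\xi^2_s$; since $u$ is differentiable at $\xi^i_s(-r)$ for all $r>0$ while $\mathbf{x}_2([0,T])\subset\SING$, the only possible intersection is at $\xi^i_s(0)=\mathbf{x}_1(s)$. Uniqueness of $t_s$ then follows from Lemma~\ref{le:injectivity} as you say, and an approximation argument upgrades the statement from a.e.\ $s$ to all $s\in[0,\sigma]$.

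In short: keep your first paragraph, discard the second plan, and make the cone-disjointness condition $\rho^2+\delta^2>1$ explicit; that is the missing quantitative ingredient that turns the separation picture into a proof.
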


\begin{proof}
First,   reduce $T>0$ in order to ensure that  $\mathbf{x}_1$ and $\mathbf{x}_2$ are both injective on $[0,T]$ and satisfy  $\dot{\mathbf{x}}_j(s)\neq 0$  for a.e. $s\in[0,T]$ ($j=1,2$).

Then,  observe that  Lemma~\ref{lem:calibrated}, applied to $\mathbf{x}=\mathbf{x}_1$, ensures the existence of  $r_1>0$, $s_1\in(0,T]$, and  $\delta\in(0,1)$ such that for a.e. $s\in [0,s_1]$ 
one can find backward calibrated curves 
 $\xi^1_s$ and  $\xi^2_s$  on $(-\infty,0]$  satisfying \eqref{eq:calib1}, \eqref{eq:calib2}, and \eqref{eq:calib3} for all  $r\in [0,r_1]$.

Next, choose 
$$
\rho=\frac{1+\sqrt{1-\delta^2}}2\in \big( \sqrt{1-\delta^2},1\big)
$$
in Lemma~\ref{lem:cone} and let $s_\rho$, $\tau_\rho$, and $\sigma_\rho(\cdot)$ be such that
\begin{enumerate}[(i)]
	\item $x_0\in C^-_\rho (\mathbf{x}_1(s),\theta_1(s))$ for a.e. $s\in[0,s_\rho  ]$,
	\item $\mathbf{x}_2(t)\in C^+_\rho (\mathbf{x}_1(s),\theta_1(s))$ for all $t\in [0,\tau_r]$ and a.e. $s\in[0,\sigma_\rho  (t)]$,
	\item $|\mathbf{x}_2(t)-\mathbf{x}_1(s)|\leqslant  \frac{1+\rho}{2\rho} t|\dot{\mathbf{x}}_1^+(0)|$ for all $t\in [0,\tau_r]$ and all  $s\in[0,\sigma_\rho  (t)]$.
\end{enumerate}

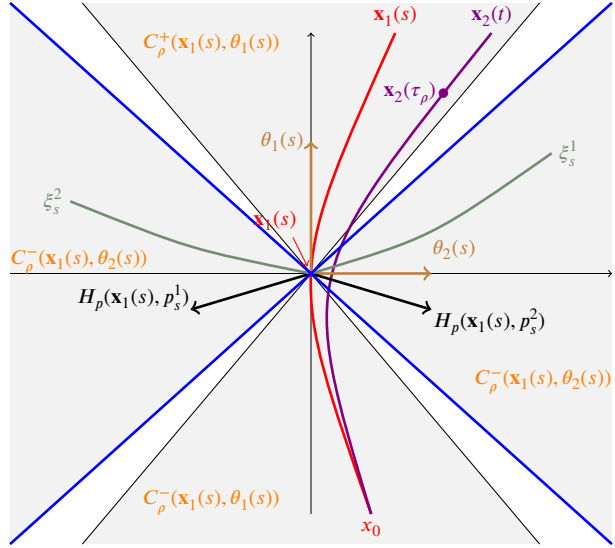
\begin{figure}\label{fig}
	\begin{center}
\begin{tikzpicture}[scale=1.6]
    \filldraw[fill=gray!10!white]
    (-2.5,2.25) -- (0,0) -- (-2.5,-2.25);
    \filldraw[fill=gray!10!white]
    (2.5,2.25) -- (0,0) -- (2.5,-2.25);
    \filldraw[fill=gray!10!white]
    (-1.9,2.25) -- (0,0) -- (1.9,2.25);
    \filldraw[fill=gray!10!white]
    (-1.9,-2.25) -- (0,0) -- (1.9,-2.25);
    \draw [line width=.1pt] [->,black][name path=axis_x] (-2.5,0) .. controls (0,0) .. (2.5,0);
	\draw [line width=.1pt] [->,black][name path=axis_y] (0,-2) .. controls (0,0) .. (0,2);
	\draw [line width=1pt] [red][name path=curve x] (0.5,-2) .. controls (-.2,0) .. (0.7,2);
	\draw [line width=1pt] [violet][name path=curve y] (0.5,-2) .. controls (-.1,0) .. (1.5,2);
	\draw [line width=1pt] [->] [brown][name path=curve v] (0,0) .. controls (1,0) .. (1,0);
	\draw [line width=1pt] [->] [black][name path=curve H_p1] (0,0) .. controls (0.5,-0.15) .. (1,-0.3);
	\draw [line width=1pt] [->] [black][name path=curve H_p2] (0,0) .. controls (-0.5,-0.15) .. (-1,-0.3);
	\draw [line width=1pt] [->] [brown][name path=curve v2] (0,0) .. controls (0,1) .. (0,1.1);
	\draw [line width=1pt] [green!10!gray][name path=curve xi1] (0,0) .. controls (1,0.3) .. (2,1);
	\draw [line width=1pt] [green!10!gray][name path=curve xi2] (0,0) .. controls (-1,0.2) .. (-2,0.6);
	\draw [line width=1pt] [blue][name path=curve 1] (0,0) .. controls (-1,0.9) .. (-2.5,2.25);
	\draw [line width=1pt] [blue][name path=curve 3] (0,0) .. controls (-1,-0.9) .. (-2.5,-2.25);
	\draw [line width=1pt] [blue][name path=curve 2] (0,0) .. controls (1,0.9) .. (2.5,2.25);
	\draw [line width=1pt] [blue][name path=curve 4] (0,0) .. controls (1,-0.9) .. (2.5,-2.25);
	\filldraw [violet] (1.1,1.5) circle [radius=1pt];
	\draw (0.95,-0.4) node[right][color=black][scale=0.8] {$H_p(\mathbf{x}_1(s),p^2_{s})$};
	\draw (-0.95,-0.2) node[left][color=black][scale=0.8] {$H_p(\mathbf{x}_1(s),p^1_{s})$};
	\draw (0.95,0.2) node[right][color=brown][scale=0.8] {$\theta_2(s)$};
	\draw (0.5,-2) node[below][color=red][scale=0.8] {$x_0$};
	\draw (2,1) node[right][color=green!10!gray][scale=0.8] {$\xi^1_{s}$};
	\draw (-2,0.6) node[left][color=green!10!gray][scale=0.8] {$\xi^2_{s}$};
	\draw (-0.3,0.3) node[above][color=red][scale=0.8] {$\mathbf{x}_1(s)$};
	\draw[->] [red] (-0.15,0.3) -- (-0.03,0.06);
	\draw (1.5,2) node[above][color=violet][scale=0.8] {$\mathbf{x}_2(t)$};
	\draw (0.7,2) node[above][color=red][scale=0.8] {$\mathbf{x}_1(s)$};
	\draw (0,1.1) node[left][color=brown][scale=0.8] {$\theta_1(s)$};
	\draw (1.3,-0.9) node[right][color=orange][scale=0.8] {$C^-_\rho (\mathbf{x}_1(s),\theta_2(s))$};
	\draw (-1.3,0.1) node[left][color=orange][scale=0.8] {$C^-_\rho (\mathbf{x}_1(s),\theta_2(s))$};
	\draw (-0.2,-1.9) node[left][color=orange][scale=0.8] {$C^-_\rho (\mathbf{x}_1(s),\theta_1(s))$};
	\draw (-0.2,1.9) node[left][color=orange][scale=0.8] {$C^+_\rho (\mathbf{x}_1(s),\theta_1(s))$};
	\draw (1.1,1.5) node[left][color=violet][scale=0.8] {$\mathbf{x}_2(\tau_{\rho})$};
\end{tikzpicture}
\end{center}
\caption{The illustration of various objects near $\mathbf{x}_1(s)$ for sufficiently small $s>0$.}\label{fig}
\end{figure}
By possibly reducing $\tau_\rho$, without loss of generality we can suppose that 
\begin{equation}
\label{eq:ball}
 \frac{1+\rho}{2\rho} \tau_\rho|\dot{\mathbf{x}}_1^+(0)|<\delta r_1.
\end{equation}
Then, recalling that 
\begin{equation*}
\theta_1(s)= \frac{\dot{\mathbf{x}}_1(s)}{|\dot{\mathbf{x}}_1(s)|}
\quad\mbox{and}\quad
\theta_2(s)=\frac{p^2_s-p^1_s}{|p^2_s-p^1_s|}\qquad(s\in[0,T]\mbox{ a.e.})
\end{equation*}
are orthogonal unit vectors, we claim  that, for a.e. $0\leqslant s\leqslant s_1\wedge \sigma_\rho(\tau_\rho)$,
\begin{equation*}
C_\rho (\mathbf{x}_1(s),\theta_1(s))\bigcap C_\delta (\mathbf{x}_1(s),\theta_2(s))=\{\mathbf{x}_1(s)\}.
\end{equation*}
Indeed, for any $x\in C_\rho (\mathbf{x}_1(s),\theta_1(s))\cap C_\delta (\mathbf{x}_1(s),\theta_2(s))$  we have that
\begin{align*}
	|x-\mathbf{x}_1(s)|^2=&\,\langle x-\mathbf{x}_1(s),\theta_1(s)\rangle^2+\langle x-\mathbf{x}_1(s),\theta_2(s)\rangle^2\\
	\geqslant&\,(\rho^2+\delta^2)|x-\mathbf{x}_1(s)|^2.
\end{align*}
This yields  $x=\mathbf{x}_1(s)$ because  $\rho^2+\delta^2>1$.

Now, define
$
\sigma=\min\big\{s_1, s_\rho, \sigma_\rho(\tau_\rho)\big\}
$
and fix $ s\in [0,\sigma]$ in the set of full measure on which (i) is satisfied together with
(ii) and (iii), that is, 
\begin{equation*}
\mathbf{x}_2(\tau_\rho)\in C^+_\rho (\mathbf{x}_1( s),\theta_1( s))
\quad\mbox{and}\quad
|\mathbf{x}_2(\tau_\rho)-\mathbf{x}_1( s)|<\delta r_1
\end{equation*}
where  \eqref{eq:ball} has also been taken into account. By possibly reducing $\sigma$, we also have that
$|\mathbf{x}_2(t)-\mathbf{x}_1( s)|<\delta r_1
$ for  all $t\in[0,\tau_\rho]$.
So, the arc $\mathbf{x}_2$, restricted to $[0,\tau_\rho]$, 
connects the point $\mathbf{x}_2(\tau_\rho)$ of the cone $C^+_\rho (\mathbf{x}_1( s),\theta_1( s))$ with 
$x_0\in C^-_\rho (\mathbf{x}_1( s),\theta_1( s))$, remaining in the open ball of radius $\delta r_1$ centered at $\mathbf{x}_1( s)$. Thus, in view of  \eqref{eq:calib2} and \eqref{eq:calib3}, $\mathbf{x}_2$ must intersect at least one of the two calibrated curves  $\xi^1_s$ and  $\xi^2_s$. However, this can happen only at $\xi^1_s(0)=\mathbf{x}_1( s)=\xi^2_s(0)$,  because $u$ is smooth at all points $\xi^2_s(-r)$ with $0<r<\infty$, whereas $\mathbf{x}_2$ is a singular arc. Finally, such an intersection occurs at a unique time $t_s$ owing to Lemma~\ref{le:injectivity}.

To complete the proof we observe that  $\mathbf{x}_2(t_s)=\mathbf{x}_1(s)$ for all $s\in[0,\sigma]$, not just on a set of full measure. This fact can be easily justified by an approximation argument.
\end{proof}

We are now in a position to prove our main result.

\smallskip

\begin{proof}[Proof of Theorem~\ref{thm:reparametrization}]
Let $\sigma\in(0,T]$ be given by Lemma \ref{lem:cross}. Then for each $s\in[0,\sigma]$ there exists a unique $\phi(s):=t_s\in[0,T_1]$ with $\mathbf{x}_2(\phi(s))=\mathbf{x}_1(s)$. 

Recalling that, thanks to Lemma~\ref{le:injectivity}, both $\mathbf{x}_1(\cdot)$ and $\mathbf{x}_2(\cdot)$ can be assumed to be injective on $[0,\sigma]$ and $[0,\phi(\sigma)]$, respectively, we proceed to show that $\phi$ is also an injection. Observe that, for any $0\leqslant s_0, s_1\leqslant\sigma$,
\begin{align*}
	\mathbf{x}_2(\phi(s_1))-\mathbf{x}_2(\phi(s_0))=&\,\int^{\phi(s_1)}_{\phi(s_0)}\dot{\mathbf{x}}_2(t)\ dt\\
	=&\,\int^{\phi(s_1)}_{\phi(s_0)}(\dot{\mathbf{x}}_2(t)-\dot{\mathbf{x}}_2^+(0))\ dt+(\phi(s_1)-\phi(s_0))\dot{\mathbf{x}}_2^+(0).
\end{align*}
Therefore,
\begin{align*}
	|\mathbf{x}_2(\phi(s_1))-\mathbf{x}_2(\phi(s_0))-(\phi(s_1)-\phi(s_0))\dot{\mathbf{x}}_2^+(0)|\leqslant\omega_{\mathbf{x}_2}(\phi(s_1)\vee\phi(s_0))|\phi(s_1)-\phi(s_0)|,
\end{align*}
where $\omega_{\mathbf{x}_2}$ is given by \eqref{omega}. Thus, returning to $\mathbf{x}_1=\mathbf{x}_2\circ\phi$ we derive
\begin{equation}\label{eq:Y_phi}
	\begin{split}
		|\mathbf{x}_1(s_1)-\mathbf{x}_1(s_0)|\geqslant&\,|\phi(s_1)-\phi(s_0)|\big(|\dot{\mathbf{x}}_2^+(0)|-\omega_{\mathbf{x}_2}(\phi(s_1)\vee\phi(s_0))|\big),\\
	|\mathbf{x}_1(s_1)-\mathbf{x}_1(s_0)|\leqslant&\,|\phi(s_1)-\phi(s_0)|\big(|\dot{\mathbf{x}}_2^+(0)|+\omega_{\mathbf{x}_2}(\phi(s_1)\vee\phi(s_0))|\big).
	\end{split}
\end{equation}
Notice that \eqref{eq:Y_phi} leads to
\begin{equation}
\label{eq:bilip}
	|\phi(s_1)-\phi(s_0)|\geqslant\frac{|\mathbf{x}_1(s_1)-\mathbf{x}_1(s_0)|}{|\dot{\mathbf{x}}_2^+(0)|+\omega_{\mathbf{x}_2}(\phi(s_1)\vee\phi(s_0))|}
\end{equation}
and this implies that $\phi$ is injective as so is $\mathbf{x}_1$.

Next, we prove that $\phi$ is continuous on $[0,\sigma]$, or the graph of $\phi$ is closed. Let $s_j\to\bar{s}$ be any sequence  such that $\phi(s_j)\to\bar{t}$ as $j\to\infty$. 
Then  
\begin{equation*}
\mathbf{x}_1(s_j)\to\mathbf{x}_1(\bar{s})
\quad\mbox{and}\quad
\mathbf{x}_2(\phi(s_j))=\mathbf{x}_1(s_j)\to\mathbf{x}_2(\bar{t})
\quad\mbox{as}\quad
j\to\infty.
\end{equation*}
So, $\mathbf{x}_2(\phi(\bar{s}))=\mathbf{x}_1(\bar{s})=\mathbf{x}_2(\bar{t})$. Since $\mathbf{x}_2(\cdot)$ is  injective, it follows that $\bar{t}=\phi(\bar{s})$. 

Being continuous,  $\phi$ is  a homeomorphism. It remains to prove that $\phi$ is bi-Lipschitz.  The  continuity of $\phi$ at $0$  ensures that, after possibly reducing $\sigma$,
\begin{equation}
\label{eq:omega_small}
\omega_{\mathbf{x}_1}(s),\,\omega_{\mathbf{x}_2}(t)\leqslant\frac{|\dot{\mathbf{x}}_2^+(0)|}2=\frac{|\dot{\mathbf{x}}_1^+(0)|}2
\end{equation}
for all $s_0,s_1\in[0,\sigma]$. Thus, by \eqref{eq:Y_phi} we have that
\begin{align*}
	|\phi(s_1)-\phi(s_0)|\leqslant\frac{|\mathbf{x}_1(s_1)-\mathbf{x}_1(s_0)|}{|\dot{\mathbf{x}}_2^+(0)|-\omega_{\mathbf{x}_2}(\phi(s_1)\vee\phi(s_0))|}
	\leqslant\frac{2\,\mbox{\rm Lip}\,(\mathbf{x}_1)}{|\dot{\mathbf{x}}_1^+(0)|}\cdot|s_1-s_0|
	\end{align*}
for all $s\in[0,\sigma]$ and $t\in[0,\phi(\sigma)]$. So, $\phi$ is Lipschitz on $[0,\sigma]$. The fact that  $\phi^{-1}$ is also Lipschitz follows by a similar argument. Indeed, writing \eqref{eq:bilip} for $t_i=\phi(s_i)$ and appealing to Lemma~\ref{le:injectivity} and \eqref{eq:omega_small} once again we obtain
\begin{eqnarray*}
|t_1-t_0|&\geqslant&\frac{|\mathbf{x}_2(t_1)-\mathbf{x}_2(t_0)|}{|\dot{\mathbf{x}}_2^+(0)|+\omega_{\mathbf{x}_2}(t_1\vee t_0)}=\frac{|\mathbf{x}_1(s_1)-\mathbf{x}_1(s_0)|}{|\dot{\mathbf{x}}_2^+(0)|+\omega_{\mathbf{x}_2}(t_1\vee t_0)}
\\
&\geqslant&\frac{|\dot{\mathbf{x}}_1^+(0)|-\omega_{\mathbf{x}_1}(s_1\vee s_0)}{|\dot{\mathbf{x}}_2^+(0)|+\omega_{\mathbf{x}_2}(t_1\vee t_0)}\cdot|s_1-s_0|
\geqslant \frac 13 \cdot|s_1-s_0|
\end{eqnarray*}

The proof is completed noting that $\phi$ is unique due to the injectivity of $\mathbf{x}_1$ and $\mathbf{x}_2$. 
\end{proof}

\begin{Cor}\label{Cor:reparamatrization}
Let $\mathbf{x}$ be a strict singular characteristic as in \eqref{eq:intro_sgc} and let $\mathbf{y}$ be any singular characteristic as in Proposition \ref{pro:gc_local}. If $x_0$ is not a critical point with respect to $(H,u)$, then there exists $\sigma>0$ and a bi-Lipschitz homeomorphism $\phi:[0,\sigma]\to[0,\phi(\sigma)]$ such that $\mathbf{y}(\phi(s))=\mathbf{x}(s)$ for all $s\in[0,\sigma]$.
\end{Cor}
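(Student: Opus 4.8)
The plan is to obtain the corollary as a direct specialization of Theorem~\ref{thm:reparametrization}, applied with $\mathbf{x}_1=\mathbf{x}$ (the strict singular characteristic) and $\mathbf{x}_2=\mathbf{y}$ (the singular characteristic). After shrinking $T$ so that both arcs are defined on a common interval $[0,T]$, the only thing to verify is that $\mathbf{x}$ and $\mathbf{y}$ both belong to $\mbox{\rm Lip}_0^u(0,T;\Omega)$ and that $\dot{\mathbf{x}}^+(0)=\dot{\mathbf{y}}^+(0)=H_p(x_0,p_0)$ with $p_0=\arg\min\{H(x_0,p):p\in D^+u(x_0)\}$; the theorem then produces $\sigma\in(0,T]$ and a (unique) bi-Lipschitz homeomorphism $\phi:[0,\sigma]\to[0,\phi(\sigma)]$ with $\mathbf{x}(s)=\mathbf{y}(\phi(s))$ for every $s\in[0,\sigma]$, which is exactly the asserted identity $\mathbf{y}(\phi(s))=\mathbf{x}(s)$.

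For the singular characteristic $\mathbf{y}$ there is essentially nothing to prove: items (1), (2) and (4) of Definition~\ref{def:sc} say precisely that $\mathbf{y}$ is a Lipschitz singular arc satisfying \eqref{right-continuity}, hence $\mathbf{y}\in\mbox{\rm Lip}_0^u(0,T;\Omega)$, while item (3) is the requirement $\dot{\mathbf{y}}^+(0)=H_p(x_0,p_0)$. Moreover $\mathbf{y}(0)=x_0\in\SING$, and $x_0$ is noncritical by hypothesis, so the standing assumptions of Theorem~\ref{thm:reparametrization} are met.

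The substantive point is to see that the \emph{strict} singular characteristic $\mathbf{x}$ of \eqref{eq:intro_sgc} fits the same frame. It is Lipschitz and $\mathbf{x}(t)\in\SING$ for all $t$ by definition, so I would invoke the regularity of strict singular characteristics established in \cite{Khanin_Sobolevski2016} and reproved in Appendix~A: $\mathbf{x}$ is right-differentiable at every $t$ with $\dot{\mathbf{x}}$ right-continuous --- which gives $\mathbf{x}\in\mbox{\rm Lip}_0(0,T;\Omega)$ and \eqref{right-continuity} --- and the covector selection $p(\cdot)$ in \eqref{eq:intro_sgc} obeys the energy identity \eqref{eq:KS_energy} on all of $[0,T]$. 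Reading \eqref{eq:KS_energy} at $t=0$ gives $H(x_0,p(0))=\min_{p\in D^+u(x_0)}H(x_0,p)$; since $H(x_0,\cdot)$ is strictly convex and $D^+u(x_0)$ is a nonempty compact convex set (Proposition~\ref{basic_facts_of_superdifferential}), the minimizer is unique, so $p(0)=p_0$, and therefore $\dot{\mathbf{x}}^+(0)=H_p(x_0,p(0))=H_p(x_0,p_0)$ by \eqref{eq:intro_sgc} and the right-continuity of $\dot{\mathbf{x}}$. Thus $\mathbf{x}\in\mbox{\rm Lip}_0^u(0,T;\Omega)$ with the prescribed right derivative, and Theorem~\ref{thm:reparametrization} applies.

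The only (mild) obstacle is precisely this last verification: one must be sure that a strict singular characteristic --- a priori merely a Lipschitz singular solution of \eqref{eq:intro_sgc} with a measurable selection $p(\cdot)$ --- automatically enjoys the additional regularity (right-differentiability, right-continuity of $\dot{\mathbf{x}}$, and the energy relation \eqref{eq:KS_energy}) needed to check conditions (C) and (D), and it is \eqref{eq:KS_energy} evaluated at $t=0$ that pins the initial covector down to the minimizer $p_0$. This is exactly the content of the Khanin--Sobolevski existence/regularity theorem recalled in Appendix~A; once it is in hand, the corollary is pure bookkeeping.
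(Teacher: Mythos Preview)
Your proposal is correct and matches the paper's approach: the paper states the corollary without proof, treating it as an immediate consequence of Theorem~\ref{thm:reparametrization} once one knows (as noted in the Introduction and proved in Appendix~A) that both strict singular characteristics and the singular characteristics of Proposition~\ref{pro:gc_local} satisfy conditions (A)--(D). Your write-up simply spells out this verification in detail, which is exactly what is intended.
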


For strict singular characteristics, uniqueness holds without reparameterization as we show next.

\begin{The}\label{the:strict}
Let $u$ be a  semiconcave  solution of \eqref{eq:intro_HJ_local} and let  $x_0\in \SING$ be such that $0\not\in H_p(x_0,D^+u(x_0))$. Let $\mathbf{x}_j:[0,T]\to\Omega$ ($j=1,2$) be  strict singular characteristics with initial point $x_0$. Then there exists $\tau\in(0, T]$ such that $\mathbf{x}_1(t)=\mathbf{x}_2(t)$ for all $t\in[0,\tau]$.
\end{The}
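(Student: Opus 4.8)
The plan is to invoke Theorem~\ref{thm:reparametrization} to reparameterize $\mathbf{x}_1$ onto $\mathbf{x}_2$, and then to show that this reparameterization must be the identity, exploiting the extra rigidity of strict singular characteristics carried by the energy identity \eqref{eq:KS_energy}, which was not available for general singular characteristics. First, since any strict singular characteristic lies in $\mbox{\rm Lip}_0^u(0,T;\Omega)$ and satisfies $\dot{\mathbf{x}}_j^+(0)=H_p(x_0,p_0)$, Theorem~\ref{thm:reparametrization} yields $\sigma\in(0,T]$ and a unique bi-Lipschitz homeomorphism $\phi\colon[0,\sigma]\to[0,\phi(\sigma)]$ with $\mathbf{x}_1(s)=\mathbf{x}_2(\phi(s))$ on $[0,\sigma]$; inspecting the construction in Lemma~\ref{lem:cross} (using injectivity of $\mathbf{x}_2$ near $0$, so that the unique $t$ with $\mathbf{x}_2(t)=x_0$ is $t=0$) we get $\phi(0)=0$, hence $\phi$ is increasing. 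Let $p_j(\cdot)$ be the measurable selection attached to $\mathbf{x}_j$ in \eqref{eq:intro_sgc}. By \eqref{eq:KS_energy}, $p_j(t)$ minimizes $H(\mathbf{x}_j(t),\cdot)$ over the compact convex set $D^+u(\mathbf{x}_j(t))$, and the strict convexity of $H$ in $p$ makes this minimizer unique; hence, setting $F(x):=H_p\bigl(x,\arg\min\{H(x,p):p\in D^+u(x)\}\bigr)$ for $x\in\SING$, we obtain $\dot{\mathbf{x}}_j(t)=F(\mathbf{x}_j(t))$ for a.e.\ $t$.

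Next I would derive two a.e.\ identities on $[0,\sigma]$. On one hand, $\dot{\mathbf{x}}_2=F\circ\mathbf{x}_2$ off a null set $N\subset[0,\phi(\sigma)]$, and since $\phi$ is bi-Lipschitz the set $\{s:\phi(s)\in N\}$ is also null, so for a.e.\ $s$ we have $\dot{\mathbf{x}}_1(s)=F(\mathbf{x}_1(s))=F(\mathbf{x}_2(\phi(s)))=\dot{\mathbf{x}}_2(\phi(s))$. On the other hand, being bi-Lipschitz and increasing, $\phi$ is differentiable a.e.\ with $\phi'(s)\geqslant 1/\mbox{\rm Lip}\,(\phi^{-1})>0$ wherever the derivative exists; at a.e.\ such $s$ the Lipschitz curve $\mathbf{x}_2$ is also differentiable at $\phi(s)$ (again because $\phi$ is bi-Lipschitz), and the chain rule applied to $\mathbf{x}_1=\mathbf{x}_2\circ\phi$ gives $\dot{\mathbf{x}}_1(s)=\phi'(s)\,\dot{\mathbf{x}}_2(\phi(s))$. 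Comparing the two expressions yields $\bigl(\phi'(s)-1\bigr)\dot{\mathbf{x}}_2(\phi(s))=0$ for a.e.\ $s\in[0,\sigma]$.

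Finally, since $x_0$ is not a critical point, $\dot{\mathbf{x}}_2^+(0)=H_p(x_0,p_0)\neq 0$, so by condition (D) we may choose $\tau\in(0,\sigma]$ with $|\dot{\mathbf{x}}_2(t)|\geqslant\frac12|\dot{\mathbf{x}}_2^+(0)|$ for a.e.\ $t\in[0,\phi(\tau)]$ (possible by the continuity of $\phi$ and $\phi(0)=0$); then $\dot{\mathbf{x}}_2(\phi(s))\neq 0$ for a.e.\ $s\in[0,\tau]$, forcing $\phi'(s)=1$ a.e.\ on $[0,\tau]$. As $\phi$ is absolutely continuous with $\phi(0)=0$, this gives $\phi(s)=s$ on $[0,\tau]$, i.e.\ $\mathbf{x}_1(s)=\mathbf{x}_2(s)$ for all $s\in[0,\tau]$, as claimed. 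I expect the only delicate point to be the measure-theoretic bookkeeping of the second paragraph---transporting the a.e.\ identity $\dot{\mathbf{x}}_2=F\circ\mathbf{x}_2$ back through $\phi$ and justifying the chain rule for $\mathbf{x}_2\circ\phi$---which is precisely what the bi-Lipschitz regularity of $\phi$ furnished by Theorem~\ref{thm:reparametrization} is designed to guarantee; note that the two-dimensional hypothesis enters only through that theorem, the remainder of the argument being dimension-free.
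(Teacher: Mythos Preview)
Your proof is correct and follows essentially the same route as the paper: apply Theorem~\ref{thm:reparametrization} to obtain the bi-Lipschitz reparameterization $\phi$, use the energy identity \eqref{eq:KS_energy} to see that both curves have the same velocity field $F(\mathbf{x})$, and then combine this with the chain rule for $\mathbf{x}_1=\mathbf{x}_2\circ\phi$ to force $\phi'=1$ near the noncritical initial point. If anything, you are more careful than the paper about the measure-theoretic bookkeeping (transporting null sets through the bi-Lipschitz $\phi$ and justifying the a.e.\ chain rule), whereas the paper exploits the everywhere-defined right derivatives of strict characteristics and is slightly more casual about where $\phi'$ exists.
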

\begin{proof}
By  Theorem~\ref{thm:reparametrization}  there exists a   bi-Lipschitz homeomorphism $\phi:[0,\tau_1]\to[0,\tau_2]$, with $0\leqslant \tau_j\leqslant T\,(j=1,2)$, such that 
\begin{equation}
\label{eq:uniqhe}
\mathbf{x}_1(t)=\mathbf{x}_2(\phi(t))\qquad\forall t\in[0,\tau_1].
\end{equation}
Moreover, since $\mathbf{x}_1$ and $\mathbf{x}_2$ are strict characteristics we have that
\begin{equation*}
\begin{cases}
\dot{\mathbf{x}}^+_j(t)=H_p(\mathbf{x}_j(t),p_j(t))
 \\
 H(\mathbf{x}_j(t),p_j(t))=\min_{p\in D^+u(\mathbf{x}_j(t))} H(\mathbf{x}_j(t),p)
\end{cases}
\quad\forall t\in [0,\tau_j]\, (j=1,2)
\end{equation*}
Therefore,
\begin{equation*}
H_p(\mathbf{x}_1(t),p_1(t))=\phi'(t)H_p(\mathbf{x}_2(\phi(t)),p_2(\phi(t)))\qquad( t\in [0,\tau_1])
\end{equation*}
where, in addition to \eqref{eq:uniqhe}, we have that 
\begin{equation*}
p_2(\phi(t)))=\mbox{arg}\min_{p\in D^+u(\mathbf{x}_2(\phi(t))} H(\mathbf{x}_2(\phi(t)),p)
=\mbox{arg}\min_{p\in D^+u(\mathbf{x}_1(t)} H(\mathbf{x}_1(t),p)=p_1(t).
\end{equation*}
So, $H_p(\mathbf{x}_1(t),p_1(t))=\phi'(t)H_p(\mathbf{x}_1(t),p_1(t))$ for all $ t\in [0,\tau_1]$.
Since $0\not\in H_p(x_0,D^+u(x_0))$, we conclude that $\phi'(t)=1$, or
$\phi(t)=t$, on some interval $0\leqslant t\leqslant \tau\leqslant\tau$.
\end{proof}
Theorem~\ref{thm:reparametrization} and Theorem~\ref{the:strict} establish a connection between the absence of critical points and uniqueness of strict singular characteristics. In this direction, we also have the following global result.
\begin{Cor}\label{cor:strict}
Let $u$ be a  semiconcave  solution of \eqref{eq:intro_HJ_local} and let  $x_0\in \SING$. Let $\mathbf{x}_j:[0,T]\to\Omega$ ($j=1,2$) be  strict singular characteristics with initial point $x_0$ such that $0\not\in H_p(\mathbf{x}_j(t),D^+u(\mathbf{x}_j(t)))$ for all $t\in[0,T]$. Then $\mathbf{x}_1(t)=\mathbf{x}_2(t)$ for all $t\in[0,T]$.
\end{Cor}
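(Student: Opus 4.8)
The goal is to upgrade the local uniqueness of Theorem~\ref{the:strict} to a global statement on $[0,T]$ under the hypothesis that no point $\mathbf{x}_j(t)$ is critical. The natural strategy is a standard connectedness (continuation) argument: define
$$
\mathcal{T}=\big\{\tau\in[0,T]~\big|~\mathbf{x}_1(t)=\mathbf{x}_2(t)\text{ for all }t\in[0,\tau]\big\},
$$
and show that $\sup\mathcal{T}=T$ and that the supremum is attained. First I would note $\mathcal{T}$ is nonempty: by Theorem~\ref{the:strict} applied at the initial point $x_0$ (which is noncritical by hypothesis at $t=0$), there is $\tau_0>0$ with $[0,\tau_0]\subset\mathcal{T}$. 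Next, $\mathcal{T}$ is closed: if $\tau_n\in\mathcal{T}$ with $\tau_n\uparrow\tau^\ast$, then $\mathbf{x}_1=\mathbf{x}_2$ on $[0,\tau^\ast)$ by definition, and continuity of both arcs forces $\mathbf{x}_1(\tau^\ast)=\mathbf{x}_2(\tau^\ast)$, so $\tau^\ast\in\mathcal{T}$; thus $\tau^\ast:=\sup\mathcal{T}$ is a maximum.

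It remains to prove $\tau^\ast=T$, which is the relative-openness step and the only place requiring real work. Suppose $\tau^\ast<T$. Set $x^\ast:=\mathbf{x}_1(\tau^\ast)=\mathbf{x}_2(\tau^\ast)$; by hypothesis $0\notin H_p(x^\ast,D^+u(x^\ast))$, so $x^\ast$ is noncritical. The plan is to re-apply Theorem~\ref{the:strict} with $x^\ast$ in the role of the initial point, to the time-shifted arcs $t\mapsto\mathbf{x}_j(\tau^\ast+t)$. For this I must check that these shifted arcs are themselves strict singular characteristics issuing from $x^\ast$ in the sense of \eqref{eq:intro_sgc}, equivalently that they satisfy conditions (A)--(D) with the correct initial velocity. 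Since each $\mathbf{x}_j$ is a strict characteristic on $[0,T]$, the regularity results recalled from \cite{Khanin_Sobolevski2016} (existence of the right derivative $\dot{\mathbf{x}}_j^+(t)$ for every $t$, right-continuity of $\dot{\mathbf{x}}_j$, and the energy relation \eqref{eq:KS_energy}) give exactly what is needed at $t=\tau^\ast$: the shifted arc has right derivative $H_p(x^\ast,p_j(\tau^\ast))$ at $0$ with $p_j(\tau^\ast)=\arg\min\{H(x^\ast,p):p\in D^+u(x^\ast)\}$, which is condition (C), and right-continuity of $\dot{\mathbf{x}}_j$ at $\tau^\ast$ gives condition (D); conditions (A)--(B) are inherited. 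Theorem~\ref{the:strict} then yields $\tau'>0$ with $\mathbf{x}_1(\tau^\ast+t)=\mathbf{x}_2(\tau^\ast+t)$ for $t\in[0,\tau']$, hence $[0,\tau^\ast+\tau']\subset\mathcal{T}$, contradicting maximality. Therefore $\tau^\ast=T$ and $\mathbf{x}_1\equiv\mathbf{x}_2$ on $[0,T]$.

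The main obstacle, and the only subtle point, is justifying the re-application of Theorem~\ref{the:strict} at an interior point: one needs the ``initial data'' of the shifted arcs to match the hypotheses of that theorem, i.e. that a strict singular characteristic, when restarted from any of its noncritical points, is again a strict singular characteristic with initial velocity dictated by the min-energy selection. This is precisely where the fine regularity of strict characteristics from \cite{Khanin_Sobolevski2016} (recalled in the introduction and proved in Appendix~A) is used — without right-differentiability for every $t$, right-continuity of $\dot{\mathbf{x}}_j$, and \eqref{eq:KS_energy} holding at $t=\tau^\ast$, condition (C) and condition (D) for the shifted arc would not be available. Once that bookkeeping is in place, the argument is a routine connectedness scheme and no further computation is required.
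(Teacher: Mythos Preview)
Your argument is correct and follows the same connectedness/continuation scheme as the paper's proof: define the set of coincidence times, observe it is nonempty by Theorem~\ref{the:strict}, check that its supremum is a maximum, and derive a contradiction by re-applying Theorem~\ref{the:strict} at $\mathbf{x}_1(\tau^\ast)$ if $\tau^\ast<T$. Your write-up is in fact more careful than the paper's, which simply asserts the re-application without spelling out that the shifted arcs again satisfy the hypotheses; your explicit appeal to the right-differentiability, right-continuity, and energy identity \eqref{eq:KS_energy} at $\tau^\ast$ is exactly the bookkeeping that makes this step rigorous.
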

\begin{proof}
On account of Theorem~\ref{the:strict} we have that 
\begin{equation*}
\mathcal T:=\big\{\tau\in(0,T]~|~\mathbf{x}_1(t)=\mathbf{x}_2(t)\,,\;\forall t\in[0,\tau]\big\}
\end{equation*}
is a nonempty set. Let $\tau_0=\sup\mathcal T=\max \mathcal T$. We claim that $\tau_0=T$. For if $\tau_0<T$, applying Theorem~\ref{the:strict} with initial point $\mathbf{x}_1(\tau_0)$ we  conclude that $\mathbf{x}_1(t)=\mathbf{x}_2(t)$  on some intarval $\tau_0\leqslant t<\tau_0+\delta$, contradicting the definition of $\tau_0$.
\end{proof}
Another well-known example where we have uniqueness of the generalized characteristic is the mechanical Hamiltonian
\begin{equation}\label{eq:mech_Hamiltonian}
	H(x,p)=\frac 12|p^2|+V(x),\quad (x,p)\in\Omega\times\R^n,
\end{equation}
with $V$ a smooth function on $\Omega$.  More precisely, if $x\in\SING$, then there exists a unique Lipschitz arc $\mathbf{y}$ determined by $\dot{\mathbf{y}}^+(t)=p(t)$, where $\mathbf{y}(0)=x$ and  $p(t)=\arg\min_{p\in D^+u(\mathbf{y}(t))}|p|$. 
 In this case, uniqueness follows from semiconcavity by an application of Gronwall's lemma (see, e.g., \cite{Cannarsa_Sinestrari_book}) ensuring that, in addition, any generalised characteristic is strict. 
We now give another  justification of such a property from the point of view of  this section. 
\begin{Cor}
If $H$ is a mechanical Hamiltonian as in \eqref{eq:mech_Hamiltonian}, then the reparameterization $\phi$ in Theorem \ref{thm:reparametrization} is the  identity.
\end{Cor}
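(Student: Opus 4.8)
The plan is to show that for a mechanical Hamiltonian the bi-Lipschitz reparameterization $\phi$ produced by Theorem~\ref{thm:reparametrization} must have $\phi'\equiv 1$, hence $\phi(s)=s$. The key observation is that for $H(x,p)=\tfrac12|p|^2+V(x)$ we have $H_p(x,p)=p$, so along a singular characteristic $\mathbf{x}_j$ (which, by Proposition~\ref{pro:gc_local} together with the remark on conditions (A)--(D), may be taken to be a strict singular characteristic) the right derivative satisfies $\dot{\mathbf{x}}_j^+(t)=p_j(t)$ where $p_j(t)=\arg\min\{|p|:p\in D^+u(\mathbf{x}_j(t))\}$. In particular $|\dot{\mathbf{x}}_j^+(t)|=\min_{p\in D^+u(\mathbf{x}_j(t))}|p|$ is intrinsically determined by the point $\mathbf{x}_j(t)$ alone.

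First I would invoke Theorem~\ref{thm:reparametrization} to obtain $\sigma>0$ and the bi-Lipschitz homeomorphism $\phi:[0,\sigma]\to[0,\phi(\sigma)]$ with $\mathbf{x}_1(s)=\mathbf{x}_2(\phi(s))$ for all $s\in[0,\sigma]$. Differentiating this identity (valid a.e., since both sides are Lipschitz and $\phi$ is bi-Lipschitz) gives $\dot{\mathbf{x}}_1(s)=\phi'(s)\,\dot{\mathbf{x}}_2(\phi(s))$ for a.e.\ $s$. Then I would argue that this relation in fact holds for the right derivatives at every point where they exist, and more usefully, compare speeds: since $\mathbf{x}_1$ and $\mathbf{x}_2$ are (or may be chosen as) strict singular characteristics, at a.e.\ $s$ we have $\dot{\mathbf{x}}_1(s)=p_1(s)$ and $\dot{\mathbf{x}}_2(\phi(s))=p_2(\phi(s))$, where $p_j$ is the minimal-norm selection from $D^+u$ along $\mathbf{x}_j$. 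Because $\mathbf{x}_1(s)=\mathbf{x}_2(\phi(s))$, the superdifferentials agree, $D^+u(\mathbf{x}_1(s))=D^+u(\mathbf{x}_2(\phi(s)))$, hence the minimal-norm elements coincide: $p_1(s)=p_2(\phi(s))$. Substituting into $\dot{\mathbf{x}}_1(s)=\phi'(s)\,\dot{\mathbf{x}}_2(\phi(s))$ yields $p_1(s)=\phi'(s)\,p_1(s)$ for a.e.\ $s$.

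Finally, since $x_0$ is not a critical point, $0\notin H_p(x_0,D^+u(x_0))=D^+u(x_0)$, and by upper semicontinuity of the superdifferential this persists: $0\notin D^+u(\mathbf{x}_1(s))$, so $p_1(s)\neq 0$, for $s$ in a neighborhood of $0$ (possibly shrinking $\sigma$). From $p_1(s)=\phi'(s)p_1(s)$ with $p_1(s)\neq 0$ we get $\phi'(s)=1$ for a.e.\ $s\in[0,\sigma]$; since $\phi$ is Lipschitz, hence absolutely continuous, with $\phi(0)=0$, we conclude $\phi(s)=s$ on $[0,\sigma]$. The only mild subtlety — the main obstacle such as it is — is justifying that the singular characteristics under consideration can be taken to be strict singular characteristics (so that the selection $p_j(t)$ equals $\dot{\mathbf{x}}_j^+(t)$ and is the minimal-norm element), which is exactly the content of the remark preceding the statement of the main results together with the equivalence in condition (C); for the mechanical Hamiltonian one can alternatively quote the uniqueness of the generalized characteristic noted just above, which already guarantees that any singular characteristic is strict and is the minimal-norm gradient flow.
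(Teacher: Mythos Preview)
Your argument is correct and essentially recasts the mechanism of Theorem~\ref{the:strict} in the mechanical setting: since $H_p(x,p)=p$, any generalized (hence singular) characteristic is driven a.e.\ by the minimal-norm selection from $D^+u$, so from $\mathbf{x}_1(s)=\mathbf{x}_2(\phi(s))$ and $\dot{\mathbf{x}}_1(s)=\phi'(s)\,\dot{\mathbf{x}}_2(\phi(s))$ you get $p_1(s)=\phi'(s)\,p_1(s)$ with $p_1(s)\neq 0$, hence $\phi'\equiv 1$.

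The paper argues differently. It does not identify $\dot{\mathbf{y}}$ as the minimal-norm element; it only uses that, for mechanical $H$, both $\dot{\mathbf{x}}(t)=p(t)$ and $\dot{\mathbf{y}}(\phi(t))$ lie in the segment $D^+u(\mathbf{y}(\phi(t)))=[p_0,p_1]$, so each is a convex combination of $p_0,p_1$ with coefficients summing to $1$. Writing $p(t)=\lambda_t p_0+(1-\lambda_t)p_1$ and $\dot{\mathbf{y}}(\phi(t))=\lambda p_0+(1-\lambda)p_1$, the relation $p(t)=\phi'(t)\,\dot{\mathbf{y}}(\phi(t))$ makes the coefficient sum of $p(t)$ equal simultaneously to $1$ and to $\phi'(t)$, forcing $\phi'(t)=1$. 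Your route is cleaner once one invokes that characteristics are strict in the mechanical case (the remark just before the Corollary); the paper's is a self-contained affine computation that does not require first identifying the velocity as the energy-minimizing selection and hence applies verbatim to any generalized characteristic $\mathbf{y}$.
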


\begin{proof}
We observe that, for almost all $t\geqslant0$,
\begin{align*}
	\dot{\mathbf{y}}(t)=\lambda(t)p_0(t)+(1-\lambda(t))p_1(t)
\end{align*}
where $\lambda(t)\in[0,1]$ and we can assume $D^+u(\mathbf{y}(t))$ is a segment, say $[p_1(t),p_0(t)]$, or $\{p_0(t),p_1(t)\}\in D^*u(\mathbf{y}(t))$. Notice that $\{p_0(t),p_1(t)\}$ is also the set of extremal points of the convex set $D^+u(\mathbf{y}(t))$.

Since $\mathbf{x}(t)=\mathbf{y}(\phi(t))$, differentiating we obtain that
\begin{align*}
	\dot{\mathbf{x}}(t)=p(t)=&\,\phi'(t)\dot{\mathbf{y}}(\phi(t))\\
	=&\,\phi'(t)\{\lambda(\phi(t))p_0(\phi(t))+(1-\lambda(\phi(t)))p_1(\phi(t))\}
\end{align*}
with $D^+u(\mathbf{y}(\phi(t)))=[p_0(\phi(t)),p_1(\phi(t))]$, or $\{p_0(\phi(t)),p_1(\phi(t))\}\in D^*u(\mathbf{y}(\phi(t)))$.

Therefore, there exists a unique $\lambda_t\in[0,t]$ such that
\begin{align*}
	p(t)=\lambda_tp_0(\phi(t))+(1-\lambda_t)p_1(\phi(t)).
\end{align*}
It follows that
\begin{align*}
	\phi'(t)=\phi'(t)\{\lambda(\phi(t))+(1-\lambda(\phi(t))))=\lambda_t+(1-\lambda_t)=1.
\end{align*}
Thus, $\phi(t)\equiv t$ and this completes the proof.
\end{proof}

\appendix

\section{Existence of strict singular characteristics}
In this Appendix, we prove the following result which ensures the existence of strict singular characteristics mentioned in the Introduction.

We recall that 
$$\dot{\mathbf{x}}^+(t):=\lim_{h\downarrow 0}\frac{\mathbf{x}(t+h)-\mathbf{x}(t)}h\qquad (t\in [0,T))$$
denotes the right derivative of $\mathbf{x}:[0,T]\to\Omega$, whenever such a derivative exists.
\begin{The}\label{app_main}
Let $u$ be a  semiconcave  solution of \eqref{eq:intro_HJ_local}. If  $x_0\in \SING$ satisfies 
\begin{equation}
\label{hp:noncritical}
0\not\in \mbox{\em  co }H_p(x_0,D^+u(x_0)),
\end{equation}
then there exists a Lipschitz singular arc $\mathbf{x}:[0,T]\to\Omega$ and a right-continuous selection $p(t)\in D^+u(\mathbf{x}(t))$ such that
\begin{equation}\label{eq:app_sgc}
	\begin{split}
		\begin{cases}
		\dot{\mathbf{x}}^+(t)=H_p(\mathbf{x}(t),p(t))& \forall t\in[0,T),\\
		\mathbf{x}(0)=x_0.&
	\end{cases}
	\end{split}
\end{equation}
and
\begin{equation}\label{app:KS_energy}
	H(\mathbf{x}(t),p(t))=\min_{p\in D^+u(\mathbf{x}(t))}H(\mathbf{x}(t),p)\qquad\forall t\in [0,T).
\end{equation}
\end{The}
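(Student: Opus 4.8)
The plan is to construct $\mathbf{x}$ as the limit of a family of approximating arcs obtained by a time-discretization (Euler-type) scheme driven by the energy-minimizing superdifferential. Fix a small $T>0$; on each subdivision $0=t_0^h<t_1^h<\dots<t_N^h=T$ with mesh $h$, define $\mathbf{x}_h$ by setting $\mathbf{x}_h(0)=x_0$, choosing at each node $p_k^h=\arg\min\{H(\mathbf{x}_h(t_k^h),p):p\in D^+u(\mathbf{x}_h(t_k^h))\}$ (well-defined since $H(x,\cdot)$ is strictly convex and $D^+u$ is compact convex), and letting $\mathbf{x}_h$ be the line segment with slope $H_p(\mathbf{x}_h(t_k^h),p_k^h)$ on $[t_k^h,t_{k+1}^h]$. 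Because $u$ is Lipschitz, all the $p_k^h$ lie in a fixed compact set, hence the velocities $H_p(\cdot,p_k^h)$ are uniformly bounded; so the $\mathbf{x}_h$ are equi-Lipschitz and, after shrinking $T$, stay in a compact subset of $\Omega$. By Arzel\`a--Ascoli, along a subsequence $\mathbf{x}_h\to\mathbf{x}$ uniformly, with $\mathbf{x}$ Lipschitz and $\mathbf{x}(0)=x_0$.

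Next I would identify the limiting differential equation. Introduce the piecewise-constant selection $p_h(t)=p_k^h$ on $[t_k^h,t_{k+1}^h)$; then $\dot{\mathbf x}_h(t)=H_p(\mathbf x_h(t_k^h),p_h(t))$ a.e., and $\|p_h\|_\infty$ is bounded, so $p_h\rightharpoonup p$ weakly-$\ast$ in $L^\infty$ along a further subsequence. Upper semicontinuity of $x\rightrightarrows D^+u(x)$ (Proposition~\ref{basic_facts_of_superdifferential}(b)) together with the uniform convergence $\mathbf x_h\to\mathbf x$ forces, for a.e.\ $t$, $p(t)\in D^+u(\mathbf x(t))$ (here convexity of $D^+u$ is used to pass to weak limits) and in fact $H(\mathbf x(t),p(t))=\min_{q\in D^+u(\mathbf x(t))}H(\mathbf x(t),q)$, by semicontinuity of the value of the minimization and the fact that the nodal choices were exact minimizers. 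Passing to the limit in $\dot{\mathbf x}_h$ then yields $\dot{\mathbf x}(t)=H_p(\mathbf x(t),p(t))$ a.e.

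The main obstacle—and the heart of the argument—is upgrading this a.e.\ statement to the pointwise right-differentiability in \eqref{eq:app_sgc} with a genuinely right-continuous selection $p(\cdot)$, and showing $\mathbf x$ is a singular arc. For the singularity, I would use the hypothesis \eqref{hp:noncritical}: if $\mathbf x(t)$ were a smoothness point of $u$ on a sequence of times, the energy-minimizing $p$ would be the gradient and the scheme would reproduce the classical characteristic, which (being noncharacteristic for the singular set by the non-criticality condition, exactly as in \cite{Khanin_Sobolevski2016} and in the proof of Proposition~\ref{pro:gc_local}) cannot leave $\SING$ instantaneously; a short argument shows the set of singular times is both open and closed in $[0,T)$ and contains $0$. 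For the right-continuity, the key point is that \eqref{hp:noncritical} guarantees $H_p(\mathbf x(t),p(t))\neq 0$, so $\mathbf x$ has nonvanishing speed; combined with the minimality characterization \eqref{app:KS_energy} and upper semicontinuity of $D^+u$, one shows that every limit of $p(s)$ as $s\downarrow t$ is again the unique energy minimizer at $\mathbf x(t^+)$ — uniqueness coming from strict convexity of $H$ restricted to the (at most one-dimensional, by Proposition~\ref{pro:rectifiability}) face of $D^+u$ that supports it. Defining $p(t):=\lim_{s\downarrow t}p(s)$ (shown to exist by this uniqueness) gives the right-continuous selection, and a standard Lebesgue-point/fundamental-theorem-of-calculus argument promotes $\dot{\mathbf x}=H_p(\mathbf x,p)$ a.e.\ to $\dot{\mathbf x}^+(t)=H_p(\mathbf x(t),p(t))$ for every $t$. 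I expect the delicate part to be the simultaneous control of the singular-set confinement and the right-continuity, both of which lean crucially on the strict, quantitative version of non-criticality in \eqref{hp:noncritical}.
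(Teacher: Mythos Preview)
Your construction differs from the paper's: instead of an Euler scheme driven by the energy-minimising superdifferential, the paper mollifies $u$ to obtain smooth $u_m$ with $D^2u_m\leqslant C_2I$, solves the smooth ODE $\dot{\mathbf x}_m=H_p(\mathbf x_m,Du_m(\mathbf x_m))$, and extracts a uniform limit $\mathbf x$. The right-differentiability is then obtained not by tracking a weak limit of $p$-selections but by a contradiction argument on difference quotients: for any cluster value $\bar v$ of $\big(\mathbf x(\bar t+\tau)-\mathbf x(\bar t)\big)/\tau$, one sets $\bar p=L_v(\bar x,\bar v)$ and shows that if $\bar p$ does \emph{not} lie in the exposed face $F_{\bar v}(\bar x)=\arg\min_{p\in D^+u(\bar x)}\langle p,\bar v\rangle$, then the quantity $\alpha(p)+\beta(\bar x,p):=\big(\langle p,\bar v\rangle-\tfrac{\partial u}{\partial\bar v}(\bar x)\big)+\langle p-\bar p,H_p(\bar x,p)-H_p(\bar x,\bar p)\rangle$ has a strictly positive minimum $\mu$ on $D^+u(\bar x)$; an energy-type estimate then forces $\mathbf x_m(t)$ to exit a cone $K_\delta$ around the ray $\bar x+t\bar v$ for $t\in[3\tau_j,T]$, contradicting the definition of $\bar v$. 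Once $\bar p\in F_{\bar v}(\bar x)$, strict convexity gives that $\bar p$ is the unique minimiser of $H(\bar x,\cdot)$ on $D^+u(\bar x)$, hence $\bar v$ is unique and $\dot{\mathbf x}^+(\bar t)=H_p(\bar x,\bar p)$.

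Your route has two genuine gaps. First, passing to the limit in $\dot{\mathbf x}_h(t)=H_p(\mathbf x_h(t_k^h),p_h(t))$ via weak-$\ast$ convergence of $p_h$ does not yield $\dot{\mathbf x}=H_p(\mathbf x,p)$ a.e., because $H_p$ is nonlinear in $p$; at best you recover $\dot{\mathbf x}(t)\in\mathrm{co}\,H_p(\mathbf x(t),D^+u(\mathbf x(t)))$, i.e.\ a generalised characteristic, which is precisely what the theorem is trying to improve upon. Second, your argument for right-continuity of $p(\cdot)$ assumes that any limit of energy minimisers $p(s)$ as $s\downarrow t$ is again the energy minimiser at $\mathbf x(t)$; but upper semicontinuity of $D^+u$ only gives $\liminf_{s\downarrow t}\min_{D^+u(\mathbf x(s))}H\geqslant\min_{D^+u(\mathbf x(t))}H$, and the reverse inequality fails whenever $D^+u$ jumps up (which is exactly what happens along a singular arc). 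The paper's exposed-face argument is what replaces this missing step. Finally, your invocation of Proposition~\ref{pro:rectifiability} is two-dimensional, whereas Theorem~\ref{app_main} is stated and proved for $\Omega\subset\R^n$.
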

\begin{Rem}
 The existence of strict singular characteristics for time dependent  Hamilton-Jacobi equations was proved by Khanin and Sobolevski under the additional assumption that the solution $u$ can be locally represented as the minimum of a compact family of smooth functions. Theorem~\ref{app_main} adapts \cite[Theorem~2]{Khanin_Sobolevski2016} to stationary equations removing such an extra assumption. 
\end{Rem}

\begin{proof} The proof, which uses ideas from \cite{Khanin_Sobolevski2016}, requires several intermediate steps.

\smallskip
Let $R_0>0$ be such that the closed ball $B(x_0,2R_0)$ is contained in $\Omega$.  Take any  sequence of smooth functions  $u_m:B(x_0,2R_0)\to\R$  such that 
\begin{equation*}
\begin{cases}
(a)  & u_m\stackrel{m\to\infty}{\longrightarrow} u
\;\;
 \mbox{uniformly on }B(x_0,R_0)
 \vspace{.1cm}
 \\
(b)
 &
  \max\{\|Du\|_{\infty}, \|Du_m\|_{\infty}\}\leqslant C_1
  \\
  (c)&
 D^2u_m\leqslant C_2I
\end{cases}
\end{equation*}
for some constants $C_1,C_2>0$. A sequence with the above properties can be constructed in several ways, for instance by using mollifiers like in  \cite{Yu2006,Cannarsa_Yu2009}.
In view of the above uniform bounds, there exists $T_0>0$ such that for any $m\geqslant 1$ the Cauchy problem
\begin{equation}\label{eq:approx_GC}
	\begin{cases}
		\dot{\mathbf{x}}(t)=H_p(\mathbf{x}(t),Du_m(\mathbf{x}(t))),&t\in[0,T_0]\\
		\mathbf{x}(0)=x_0
	\end{cases}
\end{equation}
has a unique solution $\mathbf{x}_m:[0,T_0]\to B(x_0,R_0)$. Moreover, by possibly taking a subsequence, we can assume that $\mathbf{x}_m$ converges uniformly on $[0,T_0]$ to some Lipschitz arc $\mathbf{x}:[0,T_0]\to B(x_0,R_0)$. We will show that, after possibly replacing  $T_0$ by a smaller $T>0$, such a limiting curve $\mathbf{x}$ has the required properties. 
\begin{Lem}
\label{lma:noncritical}
For every $\bar t\in [0,T_0)$ and  $\varepsilon>0$ there exists and integer $m_\ep\geqslant 1$ and a real number $\tau_\ep\in(0,T_0-\bar t)$ such that
\begin{equation}
\label{eq:noncritical}
\frac{\mathbf{x}_m(t)-\mathbf{x}_m(\bar t)}{t-\bar t}\in \mbox{\em  co }
H_p\big(\mathbf{x}(\bar t),Du^+(\mathbf{x}(\bar t))\big)+\ep B\quad \forall m\geqslant m_\ep\,,\;\forall t\in[\bar t,\bar t+\tau_\ep],
\end{equation}
where $B$ benotes the closed unit ball of $\R^2$, centered at the origin. 
\end{Lem}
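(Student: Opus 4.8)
The plan is to exploit the ODE \eqref{eq:approx_GC} satisfied by $\mathbf{x}_m$ together with the uniform convergence $\mathbf{x}_m\to\mathbf{x}$ and the upper semicontinuity of $D^+u$. First I would write, for $t\in[\bar t,\bar t+\tau]$,
\[
\frac{\mathbf{x}_m(t)-\mathbf{x}_m(\bar t)}{t-\bar t}=\frac1{t-\bar t}\int_{\bar t}^t H_p\big(\mathbf{x}_m(s),Du_m(\mathbf{x}_m(s))\big)\,ds,
\]
so that the left-hand side of \eqref{eq:noncritical} is an average of the velocity field along the curve. Hence it suffices to show that, for all large $m$ and all $s$ in a short interval $[\bar t,\bar t+\tau_\ep]$, the vector $H_p(\mathbf{x}_m(s),Du_m(\mathbf{x}_m(s)))$ lies in the convex set $\mathrm{co}\,H_p(\mathbf{x}(\bar t),D^+u(\mathbf{x}(\bar t)))+\ep B$; since this set is convex, the average then lies in it as well.

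Next I would control $Du_m(\mathbf{x}_m(s))$. The key point is that reachable gradients of the mollified functions accumulate on $D^*u\subset D^+u$: more precisely, using property (c) (the semiconcavity bound $D^2u_m\leqslant C_2 I$) together with (a) and (b), one shows that for any $\eta>0$ there exist $m_\eta$ and $r_\eta>0$ such that $Du_m(y)\in D^+u(\mathbf{x}(\bar t))+\eta B$ whenever $m\geqslant m_\eta$ and $|y-\mathbf{x}(\bar t)|\leqslant r_\eta$. (This is the standard fact that $\limsup$ of gradients of a uniformly convergent, uniformly semiconcave sequence is contained in $D^+u$; it follows from Proposition~\ref{criterion-Du_semiconcave} applied to $u_m$ and a passage to the limit.) By the uniform convergence $\mathbf{x}_m\to\mathbf{x}$ and the continuity of $\mathbf{x}$ at $\bar t$, we can pick $\tau_\ep$ small and $m_\ep$ large so that $|\mathbf{x}_m(s)-\mathbf{x}(\bar t)|\leqslant r_\eta$ for all $s\in[\bar t,\bar t+\tau_\ep]$ and $m\geqslant m_\ep$. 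Therefore $Du_m(\mathbf{x}_m(s))\in D^+u(\mathbf{x}(\bar t))+\eta B$.

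Finally I would transfer this to a statement about $H_p$. Since $H$ is $C^1$, the map $(x,p)\mapsto H_p(x,p)$ is continuous, hence uniformly continuous on the compact set $\overline{B(x_0,R_0)}\times \overline{B(0,C_1+1)}$; choosing $\eta$ (and shrinking $\tau_\ep$ so that $|\mathbf{x}_m(s)-\mathbf{x}(\bar t)|$ is small in the $x$-slot too) small enough compared with the modulus of continuity of $H_p$ gives
\[
H_p\big(\mathbf{x}_m(s),Du_m(\mathbf{x}_m(s))\big)\in H_p\big(\mathbf{x}(\bar t),D^+u(\mathbf{x}(\bar t))\big)+\ep B
\subset \mathrm{co}\,H_p\big(\mathbf{x}(\bar t),D^+u(\mathbf{x}(\bar t))\big)+\ep B
\]
for all $m\geqslant m_\ep$ and $s\in[\bar t,\bar t+\tau_\ep]$. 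Averaging in $s$ as above yields \eqref{eq:noncritical}. The main obstacle, and the only nontrivial ingredient, is the gradient-inclusion step $Du_m(\mathbf{x}_m(s))\in D^+u(\mathbf{x}(\bar t))+\eta B$: it requires combining the semiconcavity estimate (c) with the uniform convergence (a) to rule out that gradients of the smooth approximations escape the superdifferential of the limit, and then using upper semicontinuity of $x\rightrightarrows D^+u(x)$ (Proposition~\ref{basic_facts_of_superdifferential}(b)) to localize at $\mathbf{x}(\bar t)$; everything else is a soft continuity/convexity argument.
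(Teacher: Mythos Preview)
Your proposal is correct and follows essentially the same route as the paper: write the difference quotient as the average of $\dot{\mathbf x}_m=H_p(\mathbf x_m,Du_m(\mathbf x_m))$, show that $Du_m(\mathbf x_m(s))$ falls into $D^+u(\mathbf x(\bar t))+\eta B$ for $m$ large and $s$ near $\bar t$ via the uniform semiconcavity bound $(c)$ and the uniform convergence $(a)$, transfer this to $H_p$ by continuity, and then average. The only cosmetic difference is that the paper packages the gradient-inclusion step as a contradiction argument on sequences (extract $m_k\to\infty$, $t_k\downarrow\bar t$, pass to the limit in the proximal inequality to get $\bar p\in D^+u(\mathbf x(\bar t))$), whereas you state it directly as a uniform inclusion; the two are equivalent and rely on exactly the same ingredient you identified.
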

\begin{proof}
We begin by showing that for every $\bar t\in [0,T_0)$ and  $\varepsilon>0$
 there exist $m_\ep\geqslant 1$ and  $\tau_\ep\in(0,T_0-\bar t)$ satisfying
\begin{equation}\label{eq:Young}
\dot{\mathbf{x}}_m(t)\in 
H_p\big(\mathbf{x}(\bar t),Du^+(\mathbf{x}(\bar t))\big)+\ep B,\qquad t\in[\bar t,\bar t+\tau_\ep]\;\mbox{  a.e.}
\end{equation}
 for all $ m\geqslant m_\ep$. We argue by contradiction:  set $\Phi(\bar t)= H_p\big(\mathbf{x}(\bar t),Du^+(\mathbf{x}(\bar t))\big)$ and suppose there exist $\bar t\in [0,T_0)$,  $\varepsilon>0$, and sequences $m_k\to\infty$ and $t_k\downarrow \bar t$ such that
 \begin{equation*}
\begin{cases}
(i) &  \dot{\mathbf{x}}_{m_k}(t_k)\notin \Phi(\bar t)+\ep B,\quad \forall k\geqslant 1
 \\
(ii)&Du_{m_k}\big( \mathbf{x}_{m_k}(t_k)\big)\to \bar p
\quad(k\to\infty)
\end{cases}
\end{equation*}
where we have used bound $(b)$ above to justify $(ii)$. We claim that $\bar p\in D^+u\big(\mathbf{x}(\bar t)\big)$.
Indeed, in view of (c) above we have that, for all $k\geqslant 1$,
\begin{equation*}
u_{m_k}\big( \mathbf{x}_{m_k}(t_k)+y\big)-u_{m_k}\big( \mathbf{x}_{m_k}(t_k)\big)-\big\langle Du_{m_k}\big( \mathbf{x}_{m_k}(t_k)\big),y\big\rangle 
\leqslant C_2|y|^2,\quad\forall |y|\leqslant R_0.
\end{equation*}
Hence, in the limit as $k\to\infty$, we get
\begin{equation*}
u( \mathbf{x}(\bar t)+y)-u( \mathbf{x}(\bar t))-\langle \bar p,y\rangle 
\leqslant C_2|y|^2,\quad\forall |y|\leqslant R_0,
\end{equation*}
which in turn proves our claim. Thus, we conclude that
\begin{equation*}
 \dot{\mathbf{x}}_{m_k}(t_k)=H_p\big( \mathbf{x}_{m_k}(t_k),Du_{m_k}( \mathbf{x}_{m_k}(t_k))\big)
 \stackrel{k\to\infty}{\longrightarrow} H_p(\mathbf{x}(\bar t),\bar p)\in\Phi(\bar t)
\end{equation*}
in contrast with $(i)$. So, \eqref{eq:Young} is proved.

Finally, \eqref{eq:noncritical} can be derived from \eqref{eq:Young} by integration.
\end{proof}

\smallskip
By appealing to the upper semi-continuity
 of $D^+u$ and assumption \eqref{hp:noncritical} we conclude that there exists $T\in(0,T_0]$ such that 
 \begin{equation}
\label{eq:noncritical_t}
0\notin  \mbox{  co }H_p\big(\mathbf{x}( t),Du^+(\mathbf{x}( t))\big)\quad\forall t\in[0,T].
\end{equation}
Now,  
 fix any $\bar t\in [0,T)$ and let $\bar{v}\in\R^2$ be any vector such that
\begin{equation}\label{eq:limits}
	\lim_{j\to\infty}\frac{\mathbf{x}(\bar t+\tau_j)-\mathbf{x}(\bar t)}{\tau_j}=\bar{v}
\end{equation}
 for some sequence $\tau_j\searrow 0$ ($j\to\infty$). Observe that  $\bar v\in \mbox{  co }H_p\big(\mathbf{x}(\bar t),Du^+(\mathbf{x}(\bar t))\big)$
  in view of Lemma~\ref{lma:noncritical}. So,
  $\bar v\neq 0$ owing to \eqref{eq:noncritical_t}.
 Set $\bar x=\mathbf{x}(\bar t)$ and define
 \begin{eqnarray*}
&\bar{p}\in\R^2&  \mbox{ by }\bar{v}=H_p(\bar x,\bar{p}) \mbox{ (or $\bar{p}=L_v(\bar{x},\bar{v})$)}
\\
&F_{\bar{v}}(\bar{x})=&\big\{p^*\in D^+u(\bar{x}):\langle p^*,\bar v\rangle =\min_{p\in D^+u(\bar{x})}\langle p,\bar v\rangle \big\}.
\end{eqnarray*}
Notice that $F_{\bar{v}}(\bar{x})$ is  the exposed face of the convex set $D^+u(\bar{x})$ in the direction $\bar{v}$ (see, for instance, \cite{Cannarsa_Sinestrari_book}). 
The following lemma identifies $\bar p$ (hence $\bar v$) uniquely.
\begin{Lem}\label{lem0_app}
Suppose  $\bar{p}\in F_{\bar{v}}(\bar{x})$. Then $\bar{p}$ is the unique element in $D^+u(\bar{x})$ such that
\begin{equation}\label{eq:min_e}
	H(\bar{x},\bar{p})=\min_{p\in D^+u(\bar{x})}H(\bar{x},p).
\end{equation}
\end{Lem}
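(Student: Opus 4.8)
\textbf{Proof plan for Lemma~\ref{lem0_app}.}
The goal is to show that, under the hypothesis $\bar p\in F_{\bar v}(\bar x)$, the vector $\bar p$ is precisely the minimizer of $H(\bar x,\cdot)$ over $D^+u(\bar x)$, and that this minimizer is unique. The plan is to exploit the duality between $\bar v=H_p(\bar x,\bar p)$ and $\bar p$ together with the strict convexity of $H(\bar x,\cdot)$. First I would recall that, by convex duality (Tonelli), $\bar p=L_v(\bar x,\bar v)$ is equivalent to $\bar v=H_p(\bar x,\bar p)$, and that for any $p\in\R^2$ one has the Fenchel inequality $\langle p,\bar v\rangle\leqslant H(\bar x,p)+L(\bar x,\bar v)$ with equality if and only if $p=\bar p$. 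This is the key algebraic identity that converts the linear ``exposed face'' condition into a statement about $H$.

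Next I would argue as follows. For every $p\in D^+u(\bar x)$, Fenchel's inequality gives
\begin{equation*}
H(\bar x,p)\geqslant \langle p,\bar v\rangle - L(\bar x,\bar v).
\end{equation*}
Since $\bar p\in F_{\bar v}(\bar x)$, we have $\langle p,\bar v\rangle\geqslant\langle\bar p,\bar v\rangle$ for all $p\in D^+u(\bar x)$, hence
\begin{equation*}
H(\bar x,p)\geqslant \langle\bar p,\bar v\rangle - L(\bar x,\bar v)=H(\bar x,\bar p),
\end{equation*}
where the last equality is the equality case of Fenchel's inequality for $p=\bar p$. This proves \eqref{eq:min_e}. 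For uniqueness, suppose $p'\in D^+u(\bar x)$ also attains the minimum, $H(\bar x,p')=H(\bar x,\bar p)$. Because $D^+u(\bar x)$ is convex and $H(\bar x,\cdot)$ is strictly convex, the midpoint $\tfrac12(\bar p+p')$ lies in $D^+u(\bar x)$ and satisfies $H(\bar x,\tfrac12(\bar p+p'))<H(\bar x,\bar p)$ unless $p'=\bar p$, contradicting minimality; hence $p'=\bar p$.

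I expect the routine part to be the Fenchel-inequality bookkeeping, and the only genuinely delicate point is making sure the strict convexity of $H$ in $p$ is used correctly for the uniqueness half — in particular that $D^+u(\bar x)$ being a (possibly degenerate) convex set still allows the midpoint argument, which it does since a segment is convex and any strictly convex function is strictly convex along segments. A subtlety worth flagging: strict convexity of $H(\bar x,\cdot)$ is exactly hypothesis available from the standing assumptions on \eqref{eq:intro_HJ_local}, so no extra regularity is needed. Thus the lemma follows, and as a by-product $\bar v=H_p(\bar x,\bar p)$ is uniquely determined, which is what is needed to identify the right derivative $\dot{\mathbf{x}}^+(\bar t)$ in the sequel.
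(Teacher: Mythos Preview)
Your proof is correct and essentially the same as the paper's. The paper uses the subgradient inequality $H(\bar x,p)-H(\bar x,\bar p)\geqslant\langle H_p(\bar x,\bar p),p-\bar p\rangle$ directly, combined with $\langle H_p(\bar x,\bar p),p-\bar p\rangle\geqslant 0$ from $\bar p\in F_{\bar v}(\bar x)$; your Fenchel inequality is just this same inequality rewritten via $\bar v=H_p(\bar x,\bar p)$ and $L(\bar x,\bar v)=\langle\bar p,\bar v\rangle-H(\bar x,\bar p)$, and your midpoint argument for uniqueness spells out what the paper dispatches in one line by citing strict convexity.
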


\begin{proof}
Since $\bar{p}\in F_{\bar{v}}(\bar{x})$, we have that
\begin{align*}
	\langle  \bar p,\bar v\rangle =\langle \bar p,H_p(\bar{x},\bar{p})\rangle  =\min_{p\in D^+u(\bar{x})}\langle p, H_p(\bar{x},\bar{p})\rangle.
\end{align*}
Therefore, by convexity we conclude that
\begin{align*}
	0\leqslant \langle H_p(\bar{x},\bar{p}),p-\bar{p}\rangle\leqslant H(\bar{x},p)-H(\bar{x},\bar{p}),\quad\forall p\in D^+u(\bar{x}). 
\end{align*}
Since $H$ is strictly convex in $p$, $\bar{p}$ is the unique element in $D^+u(\bar{x})$ satisfying \eqref{eq:min_e}. 
\end{proof}

\noindent
Notice that the above lemma yields the existence of the right-derivative $\dot{\mathbf{x}}^+(\bar t)$ as soon as one shows that $\bar{p}\in F_{\bar{v}}(\bar{x})$ for any $\bar{v}$ satisfying \eqref{eq:limits}. 

\smallskip
Next, to show that $\bar{p}\in F_{\bar{v}}(\bar{x})$, we proceed by contradiction assuming that
\begin{equation}\label{eq:ast2}
	\bar{p}\not\in F_{\bar{v}}(\bar{x}).
\end{equation}
Let us define functions $\alpha,\beta:D^+u(\bar{x})\to\R$ by
\begin{align*}
	\alpha(p)=\langle p,\bar v\rangle -\frac{\partial u}{\partial\bar{v}}(\bar{x}),\quad \beta(x,p)=
	\langle p-\bar{p},H_p(x,p)-H_p(x,\bar{p})\rangle 
	\quad\forall p\in D^+u(\bar{x})
\end{align*}
where we have set $\frac{\partial u}{\partial\bar{v}}(\bar{x})=\lim_{\lambda\to0^+}\frac{u(\bar{x}+\lambda\bar{v})-u(\bar{x})}{\lambda}$. Recall that, since $u$ is semiconcave, 
\begin{equation}\label{eq:directional_derivative}
	\frac{\partial u}{\partial\bar{v}}(\bar{x})=\min_{p\in D^+u(\bar{x})}\langle p,\bar v\rangle 
\end{equation}
(see, for instance, \cite{Cannarsa_Sinestrari_book}). The following simple lemma is crucial for the proof.

\begin{Lem}\label{lem1_app}
If $\bar{p}\not\in F_{\bar{v}}(\bar{x})$, then
\begin{align*}
	\mu:=\min_{p\in D^+u(\bar{x})}\{\alpha(p)+\beta(\bar{x},p)\}>0.
\end{align*}
\end{Lem}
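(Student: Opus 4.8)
The plan is to analyze the two terms $\alpha(p)$ and $\beta(\bar x,p)$ separately, using their convexity and sign properties, and to show that their sum cannot vanish at any minimizer unless $\bar p\in F_{\bar v}(\bar x)$, contradicting the hypothesis. First I would record that $\alpha(p)\geqslant 0$ for all $p\in D^+u(\bar x)$ by \eqref{eq:directional_derivative}, with $\alpha(p)=0$ precisely on the exposed face $F_{\bar v}(\bar x)$. Likewise, by the strict convexity of $H(\bar x,\cdot)$ there is $\nu>0$ with
\begin{align*}
\beta(\bar x,p)=\langle p-\bar p,\,H_p(\bar x,p)-H_p(\bar x,\bar p)\rangle\geqslant \nu|p-\bar p|^2\geqslant 0,
\end{align*}
so that $\alpha(p)+\beta(\bar x,p)\geqslant 0$ on all of $D^+u(\bar x)$, and $\mu\geqslant 0$ is immediate. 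The point is to rule out $\mu=0$.

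Next I would argue by contradiction: suppose $\mu=0$. Since $D^+u(\bar x)$ is compact and $\alpha,\beta(\bar x,\cdot)$ are continuous, the minimum is attained at some $p^*\in D^+u(\bar x)$ with $\alpha(p^*)+\beta(\bar x,p^*)=0$. Because both summands are nonnegative, this forces $\alpha(p^*)=0$ and $\beta(\bar x,p^*)=0$ simultaneously. From $\beta(\bar x,p^*)\geqslant \nu|p^*-\bar p|^2$ and $\beta(\bar x,p^*)=0$ we get $p^*=\bar p$. But then $\alpha(\bar p)=\alpha(p^*)=0$, which by the characterization of $\alpha$ says exactly that $\bar p\in F_{\bar v}(\bar x)$, contradicting the standing assumption \eqref{eq:ast2} that $\bar p\notin F_{\bar v}(\bar x)$. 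Hence $\mu>0$.

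The only genuinely delicate point is the strict convexity estimate $\beta(\bar x,p)\geqslant\nu|p-\bar p|^2$: one must check that the constant $\nu$ can be chosen uniformly over the compact set $D^+u(\bar x)$, which follows from $H$ being $C^1$ and strictly convex in $p$ on the relevant compact set of momenta (as already used in the proof of Lemma~\ref{lem:calibrated}). Everything else is soft: nonnegativity of $\alpha$ is the semiconcavity identity \eqref{eq:directional_derivative}, compactness of $D^+u(\bar x)$ is Proposition~\ref{basic_facts_of_superdifferential}(a), and the contradiction is purely algebraic. I would therefore expect the write-up to be short, with the bulk of the sentence devoted to stating the two nonnegativity bounds and then observing that a zero of the sum would force $p^*=\bar p\in F_{\bar v}(\bar x)$.
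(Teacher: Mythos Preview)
Your proof is correct and follows essentially the same approach as the paper: both show that $\alpha\geqslant 0$ with equality only on $F_{\bar v}(\bar x)$, that $\beta(\bar x,\cdot)\geqslant 0$ with equality only at $\bar p$, and then use compactness of $D^+u(\bar x)$ to conclude (the paper phrases this as pointwise positivity of $M(p)$ plus continuity, you phrase it as a contradiction at a minimizer). One minor remark: the quantitative bound $\beta(\bar x,p)\geqslant\nu|p-\bar p|^2$ you flag as ``delicate'' is not actually needed here---the paper uses only that strict convexity plus $C^1$ gives strict monotonicity of $H_p(\bar x,\cdot)$, so $\beta(\bar x,p)=0$ forces $p=\bar p$ without any uniform constant.
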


\begin{proof}
Observe first that $\beta(x,p)\geqslant0$ by convexity and $\alpha(p)\geqslant0$ for all $p\in D^+u(\bar{x})$ by \eqref{eq:directional_derivative}. Since we suppose $\bar{p}\not\in F_{\bar{v}}(\bar{x})$, just  two cases are possible.
\begin{enumerate}[(1)]
	\item If $\bar{p}\not\in D^+u(\bar{x})$, then $p\not=\bar{p}$ for all $p\in D^+u(\bar{x})$. So $\beta(\bar{x},p)>0$ by strict convexity.
	\item If $\bar{p}\in D^+u(\bar{x})\setminus F_{\bar{v}}(\bar{x})$, then $\alpha(p)>0$.
\end{enumerate}
In conclusion, \
\begin{align*}
	M(p):=\alpha(p)+\beta(\bar{x},p)>0,\quad \forall p\in D^+u(\bar{x}).
\end{align*}
Since $M$ is continuous and $D^+u(\bar{x})$ is compact, the conclusion follows.
\end{proof}

For any $\varepsilon>0$ set 
$$F^{\varepsilon}_{\bar{v}}(\bar{x})=F_{\bar{v}}(\bar{x})+\varepsilon B\quad\mbox{and}\quad V_{\varepsilon}=D^+u(\bar{x})+\varepsilon B.
$$
Now, let us fix $\varepsilon=\ep(\bar v,\mu)>0$  such that
\begin{equation}\label{eq:ast3}
	\bar{p}\not\in F^{\varepsilon}_{\bar{v}}(\bar{x})\quad \text{and} \quad \min_{p\in V_{\varepsilon}}\{\alpha(p)+\beta(\bar{x},p)\}\geqslant\frac 23\mu.
\end{equation}
Let $0<R\leqslant R_0$ be such that
\begin{align*}
	 D^+u(x)\subset V_{\varepsilon/2}\quad\forall x\in B(\bar x,R).
\end{align*}
Consider the line segment 
$$\gamma(t):=\bar{x}+(t-\bar t)\bar{v}\qquad (t\in[\bar t,T])$$ 
and fix $q\in(0,1)$. After possible reducing $T$, we can assume that 
\begin{align*}
	|\gamma(t)-\bar{x}|\leqslant qR\quad\text{and}\quad|\mathbf{x}(t)-\bar{x}|\leqslant qR\quad\forall t\in[\bar t,T].
\end{align*}
Consequently, there exists $\bar m\in\N$ such that for all $m\geqslant \bar m$ we have
\begin{enumerate}[(i)]
	\item $Du_m(x)\in V_{\varepsilon}$ for all $x\in B(\bar{x},R)$;
	\item $\mathbf{x}_m(t)\in B(\bar{x},R)$ for all $t\in[\bar t,T]$.
\end{enumerate}
Moreover, by  cutting $T$ down to size, we can have  the following property satisfied: 
\begin{enumerate}[(i)]\setliststart{3}
	\item for any $t\in[\bar t,T]$ there exists $m(t)\geqslant \bar m$ such that
	\begin{align}\label{eq:m(t)}
		d_{F_{\bar{v}}(\bar{x})}(Du_m(\gamma(t)))<\varepsilon,\quad\forall m\geqslant m(t).
	\end{align}
\end{enumerate}
We observe that (iii) is a consequence of Proposition 3.3.15 in \cite{Cannarsa_Sinestrari_book} since $\bar{v}\not=0$.

For $0<\delta$ to be chosen later on, we define
\begin{align*}
	K_{\delta}=&\,\bigcup_{\bar t\leqslant t\leqslant T}B(\gamma(t),\delta (t-\bar t))\\
	=&\,\{x\in\R^n: \text{there exists}\ t\in[\bar t,T]\ \text{such that}\ |x-\gamma(t)|\leqslant\delta (t-\bar t)\}.
\end{align*}

\begin{Lem}\label{lem2_app}
Let $\varepsilon>0$ and $m(\cdot)$ be fixed so that  \eqref{eq:ast3} and \eqref{eq:m(t)} hold true. If $\bar{p}\not\in F_{\bar{v}}(\bar{x})$, then there exists $\delta>0$ such that for all $j$ sufficiently large, $\mathbf{x}_m(t)\not\in K_{\delta}$ for all $t\in(\bar t+3\tau_j,T)$ and $m$ sufficiently large.
\end{Lem}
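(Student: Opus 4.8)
The plan is to argue by contradiction, exploiting the strict positivity $\mu>0$ from Lemma~\ref{lem1_app}. Suppose, for every $\delta>0$ (along a sequence $\delta\downarrow 0$) and every $j$ in some subsequence, there exist $m=m(j,\delta)$ arbitrarily large and $t=t(j,\delta)\in(\bar t+3\tau_j,T)$ with $\mathbf{x}_m(t)\in K_\delta$, i.e.\ $|\mathbf{x}_m(t)-\gamma(t)|\leqslant\delta(t-\bar t)$. Let $t^*$ be the \emph{first} such time after $\bar t+3\tau_j$ where $\mathbf{x}_m$ touches $K_\delta$ from outside (if $\mathbf{x}_m$ is already in $K_\delta$ at $\bar t+3\tau_j$, use instead that by Lemma~\ref{lma:noncritical} the difference quotient $(\mathbf{x}_m(t)-\mathbf{x}_m(\bar t))/(t-\bar t)$ is close to $\mathrm{co}\,H_p(\bar x,D^+u(\bar x))$ on $[\bar t,\bar t+\tau_j]$, so by taking $\delta$ smaller than the distance between $\bar v$ and that convex hull one rules this out — this is where assumption \eqref{eq:ast2} enters, through the fact that $\bar v$ was chosen as a limit of difference quotients and $\bar p\notin F_{\bar v}(\bar x)$). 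So we may assume $\mathbf{x}_m(t^*)\in\partial K_\delta$ and $\mathbf{x}_m(t)\notin K_\delta$ for $t$ slightly less than $t^*$.

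The heart of the argument is a differential inequality for the function $t\mapsto \langle Du_m(\mathbf{x}_m(t)),\bar v\rangle$, or more precisely for a quantity measuring how far $u_m\circ\mathbf{x}_m$ lies below the ``calibration level'' $u(\bar x)+\frac{\partial u}{\partial\bar v}(\bar x)(t-\bar t)$. Along $\mathbf{x}_m$ one has $\frac{d}{dt}u_m(\mathbf{x}_m(t))=\langle Du_m(\mathbf{x}_m(t)),H_p(\mathbf{x}_m(t),Du_m(\mathbf{x}_m(t)))\rangle$. Writing $p_m(t)=Du_m(\mathbf{x}_m(t))$, which lies in $V_\varepsilon$ by (i), and comparing with the behaviour along the segment $\gamma$ where $Du_m(\gamma(t))$ is $\varepsilon$-close to $F_{\bar v}(\bar x)$ by \eqref{eq:m(t)}, the quantities $\alpha(p_m(t))$ and $\beta(\bar x,p_m(t))$ enter: semiconcavity controls $u_m(\mathbf{x}_m(t))-u_m(\gamma(t))$ from above by a term involving $\langle p_m,\mathbf{x}_m(t)-\gamma(t)\rangle$ plus a quadratic error $O(\delta^2(t-\bar t)^2+\text{mollification error})$, while the evolution forces $u_m(\mathbf{x}_m(t))$ to grow at rate $\langle p_m,\bar v\rangle + \langle p_m, H_p(\mathbf{x}_m,p_m)-\bar v\rangle$, and the second inner product is exactly (up to lower order) $-\beta(\bar x,p_m)$ plus $\langle\bar p,\bar v-H_p\rangle$-type terms. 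Since $\min_{V_\varepsilon}\{\alpha+\beta\}\geqslant\frac23\mu>0$ by \eqref{eq:ast3}, one obtains that for $\delta$ small enough the trajectory $\mathbf{x}_m$ is pushed \emph{away} from the segment $\gamma$ at a linear rate $c\mu(t-\bar t)$ with $c>0$, contradicting $\mathbf{x}_m(t^*)\in K_\delta$ provided $\delta<c\mu$ and $t^*-\bar t\gtrsim\tau_j$ (which is guaranteed since $t^*>\bar t+3\tau_j$).

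To make the last step precise I would introduce $g_m(t):=u_m(\mathbf{x}_m(t))-u(\bar x)-\frac{\partial u}{\partial\bar v}(\bar x)(t-\bar t)$ and show, using (i)--(iii), the semiconcavity bound (c), the approximation (a), and Lemma~\ref{lem1_app}, that $g_m'(t)\geqslant \frac13\mu-(\text{errors})$ on the set where $\mathbf{x}_m(t)$ stays within $\delta(t-\bar t)$ of $\gamma(t)$, while on the other hand if $\mathbf{x}_m(t)$ were in $K_\delta$ a competitor-curve/semiconcavity estimate gives $g_m(t)\leqslant C\delta(t-\bar t)+o(1)$; integrating the lower bound from $\bar t+3\tau_j$ to $t^*$ and using $g_m(\bar t+3\tau_j)$ bounded below (again by Lemma~\ref{lma:noncritical}) yields $g_m(t^*)\geqslant (\frac13\mu-C\delta)(t^*-\bar t)-o(1)$, which exceeds $C\delta(t^*-\bar t)$ once $\delta<\mu/(6C)$ and $j$ is large, the desired contradiction. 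The main obstacle I anticipate is bookkeeping the three independent small parameters — the mollification index $m$, the sequence index $j$ (equivalently $\tau_j$), and the tube radius $\delta$ — in the right order: $\delta$ must be fixed first in terms of $\mu$ and the Lipschitz/semiconcavity constants, then $j$ large, then $m$ large depending on both; keeping the $o(1)$ errors genuinely uniform on $[\bar t,T]$ as $m\to\infty$ (which is where (a), (b), (c) and the compactness of $D^+u(\bar x)$ are all used) is the delicate point.
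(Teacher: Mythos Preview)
Your outline has the right ingredients but the execution has a real gap. The auxiliary function $g_m(t)=u_m(\mathbf{x}_m(t))-u(\bar x)-\frac{\partial u}{\partial\bar v}(\bar x)(t-\bar t)$ does \emph{not} yield the pointwise lower bound $g_m'(t)\geqslant\frac13\mu-(\text{errors})$ you claim. Computing honestly,
\[
g_m'(t)=\alpha(p_m(t))+\beta(\mathbf{x}_m(t),p_m(t))+\big\langle \bar p,\;H_p(\mathbf{x}_m(t),p_m(t))-\bar v\big\rangle+o(1),
\]
and the last inner product (your ``$\langle\bar p,\bar v-H_p\rangle$-type terms'') is $O(1)$ with no sign: since $p_m(t)$ only lies in $V_\varepsilon$, the vector $H_p(\mathbf{x}_m(t),p_m(t))$ can sit anywhere in $H_p(\bar x,D^+u(\bar x))+o(1)$, so this term can be as negative as $-|\bar p|\cdot\operatorname{diam}H_p(\bar x,D^+u(\bar x))$, which may well swamp $\mu$. (Your identification of the cross term as ``$-\beta$'' also has the wrong sign.) The paper's remedy is to work instead with $x\mapsto u_m(x)-\langle\bar p,x\rangle$ along \emph{both} curves $\mathbf{x}_m$ and $\gamma$ and subtract: the derivative along $\mathbf{x}_m$ becomes $\langle p_m-\bar p,H_p(\mathbf{x}_m,p_m)\rangle$, and after adding and subtracting $\bar v=H_p(\bar x,\bar p)$ this decomposes \emph{exactly} as $\alpha(p_m)+\beta(\mathbf{x}_m,p_m)$ plus genuinely small errors, with the $\langle\bar p,\cdot\rangle$ contributions cancelling between the two curves. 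The Lipschitz bound $|u_m(\mathbf{x}_m(t))-\langle\bar p,\mathbf{x}_m(t)\rangle-u_m(\gamma(t))+\langle\bar p,\gamma(t)\rangle|\leqslant(C_1+|\bar p|)|\mathbf{x}_m(t)-\gamma(t)|$ then converts the integral lower bound $\frac23\mu(t-\tau_j)$ directly into $|\mathbf{x}_m(t)-\gamma(t)|\geqslant 2\delta t$ for a suitable $\delta$.

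A second, structural issue: your contradiction setup takes $t^*$ as the \emph{first} time $\mathbf{x}_m$ enters $K_\delta$ after $\bar t+3\tau_j$, and then integrates a lower bound you say is valid ``where $\mathbf{x}_m$ stays within $\delta(t-\bar t)$ of $\gamma$''---but on $(\bar t+3\tau_j,t^*)$ the curve is by construction \emph{outside} $K_\delta$, so you cannot invoke that hypothesis there. In fact no such hypothesis is needed: the inequality $\alpha(p_m)+\beta(\mathbf{x}_m,p_m)\geqslant\frac23\mu$ holds for every $s\in[\bar t,T]$ simply because $p_m(s)\in V_\varepsilon$ by (i), regardless of where $\mathbf{x}_m(s)$ sits relative to $\gamma$. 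The paper accordingly argues directly (no first-hitting time, no contradiction inside the lemma): it integrates from $\tau_j$ to $t$, bounds the boundary term at $\tau_j$ by $(C_1+|\bar p|)\delta\tau_j$ using $|\mathbf{x}_m(\tau_j)-\gamma(\tau_j)|\leqslant\delta\tau_j$, and chooses $\delta\leqslant\mu/\bigl(12(C_1+|\bar p|)\bigr)$ so that the resulting lower bound exceeds $2\delta t$ for all $t\in[3\tau_j,T]$.
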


\begin{proof}
Throughout this proof  $j\in\N$ is supposed to be so large  that $\tau_j<(T-\bar t)/3$. Moreover, in order to simplify the notation, abbreviate  $\tau$ for $\tau_j$ and we assume $\bar t=0$. 

For all $t\in(3\tau,T)$ we have that
\begin{align*}
	&\,\frac{d}{dt}\big(u_m(\mathbf{x}_m(t))-\langle  \bar p,\mathbf{x}_m(t)
	\rangle \big)\\
	=&\,\big\langle Du_m(\mathbf{x}_m(t))-\bar{p},\dot{\mathbf{x}}_m(t) \big\rangle
	 =\big\langle H_p(\mathbf{x}_m(t),Du_m(\mathbf{x}_m(t))),Du_m(\mathbf{x}_m(t))-\bar{p}\big\rangle 
.
\end{align*}
Therefore, by integrating on $(\tau,t)$,
\begin{equation}\label{eq:1}
	\begin{split}
		&\,u_m(\mathbf{x}_m(t))-\langle \bar p,\mathbf{x}_m(t)\rangle 
			-u_m(\mathbf{x}_m(\tau))+\langle \bar p,\mathbf{x}_m(\tau)\rangle\\
		=&\,\int^t_{\tau}
		\big\langle H_p(\mathbf{x}_m(s),Du_m(\mathbf{x}_m(s))), Du_m(\mathbf{x}_m(s))-\bar{p}\big\rangle 
	ds.
	\end{split}
\end{equation}
Similarly, 
\begin{align*}
	\frac{d}{dt}\big(u_m(\gamma(t)))-
	\langle \bar p,\gamma(t)\rangle\big) =
	\langle Du_m(\gamma(t))-\bar{p},\bar v\rangle .
\end{align*}
So, (iii) and Lebesgue's theorem ensure that
\begin{equation}\label{eq:2}
	\begin{split}
		&\,u_m(\gamma(t))-
		\langle \bar p,\gamma(t)
		\rangle 
		-u_m(\gamma(\tau))+
		\langle \bar p,\gamma(\tau)
		\rangle 
		\\
		=&\,
		\int^t_{\tau}
		\langle Du_m(\gamma(s))-\bar{p},\bar v\rangle 
		\ ds
		\leqslant \bigg(\frac{\partial u}{\partial\bar{v}}(\bar{x})-
		\langle \bar p,\bar v\rangle 
		+\varepsilon|\bar{v}|\bigg)(t-\tau).
	\end{split}
\end{equation}
Therefore, by \eqref{eq:1} and \eqref{eq:2} we obtain 
\begin{align*}
	&\,u_m(\mathbf{x}_m(t))-\langle \bar p,\mathbf{x}_m(t)\rangle 
			-u_m(\mathbf{x}_m(\tau))+\langle \bar p,\mathbf{x}_m(\tau)\rangle\\
	&\,-\big(\,u_m(\gamma(t))-
		\langle \bar p,\gamma(t)
		\rangle 
		-u_m(\gamma(\tau))+
		\langle \bar p,\gamma(\tau)
		\rangle \big)\\
	\geqslant&\,\int^t_{\tau}\bigg\{\big\langle H_p(\mathbf{x}_m(s),Du_m(\mathbf{x}_m(s))), Du_m(\mathbf{x}_m(s))-\bar{p}\big\rangle -\bigg(\frac{\partial u}{\partial\bar{v}}(\bar{x})-
		\langle \bar p,\bar v\rangle 
		+\varepsilon|\bar{v}|\bigg)\bigg\}ds
\end{align*}
which can be rewritten as
\begin{equation}\label{eq:app1}
	\begin{split}
		&\,\,u_m(\mathbf{x}_m(t))-\langle \bar p,\mathbf{x}_m(t)\rangle 
			-u_m(\mathbf{x}_m(\tau))+\langle \bar p,\mathbf{x}_m(\tau)\rangle\\
	&\,-\big(\,u_m(\gamma(t))-
		\langle \bar p,\gamma(t)
		\rangle 
		-u_m(\gamma(\tau))+
		\langle \bar p,\gamma(\tau)
		\rangle \big)\\
	\geqslant&\,
	\int^t_{\tau}
	\big\langle H_p(\mathbf{x}_m(s),Du_m(\mathbf{x}_m(s)))-\bar v, Du_m(\mathbf{x}_m(s))-\bar{p}\big\rangle
		ds
		\\
	&\,+\int^t_{\tau}\bigg(\langle Du_m(\mathbf{x}_m(s)),\bar{v}\rangle-\frac{\partial u}{\partial\bar{v}}(\bar{x})-\varepsilon|\bar{v}|\bigg)\ ds 
	\\
	=&\,
	\int^t_{\tau}
	\big\langle H_p(\mathbf{x}_m(s),Du_m(\mathbf{x}_m(s)))-H_p(\mathbf{x}_m(s), \bar{p}),Du_m(\mathbf{x}_m(s))-\bar{p}\big\rangle
		ds
		\\
	&\,+\int^t_{\tau}\big\langle H_p(\mathbf{x}_m(s),\bar{p})-H_p(\bar{x},\bar{p}),Du_m(\mathbf{x}_m(s))-\bar{p}\big\rangle \ ds
	\\
	&\,+\int^t_{\tau}\bigg(\langle Du_m(\mathbf{x}_m(s)),\bar{v}\rangle-\frac{\partial u}{\partial\bar{v}}(\bar{x})-\varepsilon|\bar{v}|\bigg)\ ds 
	\\
	\geqslant&\,\int^t_{\tau}\big\{\alpha(Du_m(\mathbf{x}_m(s)))+\beta(\mathbf{x}_m(s),Du_m(\mathbf{x}_m(s)))-\varepsilon|\bar{v}|\big\} ds
	\\
	&\,+\int^t_{\tau}\big\langle H_p(\mathbf{x}_m(s),\bar{p})-H_p(\bar{x},\bar{p}),Du_m(\mathbf{x}_m(s))-\bar{p}\big\rangle \ ds
	.
	\end{split}
\end{equation}
Now, observe the following:
\begin{align*}
|u_m(\gamma(t))-
		\langle \bar p,\gamma(t)
		\rangle 
		-u_m(\gamma(\tau))+
		\langle \bar p,\gamma(\tau)
		\rangle|
	\leqslant\,(C_1+|\bar{p}|)|\gamma(\tau)-\mathbf{x}_m(\tau)|\\
	\leqslant\,(C_1+|\bar{p}|)(|\gamma(\tau)-\bar{\gamma}(\tau)|+|\bar{\gamma}(\tau)-\mathbf{x}_m(\tau)|)
\end{align*}
where we recall that $C_1\geqslant\|Du_m\|_{\infty}$.

Next, we fix $\tau=\tau_j$ with $j$ large enough so that
\begin{align*}
	|\gamma(\tau)-\bar{\gamma}(\tau)|\leqslant\frac{\delta\tau}{2}\quad\text{and}\quad m\gg1\ \text{so that}\ |\bar{\gamma}(\tau)-\gamma_m(\tau)|\leqslant\frac{\delta\tau}{2}.
\end{align*}
Then
\begin{equation}\label{eq:app2}
	|u_m(\gamma(\tau))-
		\langle \bar p,\gamma(\tau)
		\rangle 
		-u_m(\mathbf{x}_m(\tau))+
		\langle \bar p,\mathbf{x}_m(\tau)
		\rangle|
	\leqslant\,(C_1+|\bar{p}|)\delta\tau.
\end{equation}
%
%
Since $Du_m(\mathbf{x}_m(s))\in V_{\varepsilon}$ for all $s\in[0,\tau]$, by \eqref{eq:ast3} we have that
\begin{equation}\label{eq:app3}
	\int^t_{\tau}\bigg\{\alpha(Du_m(\mathbf{x}_m(s)))+\beta(\mathbf{x}_m(s),Du_m(\mathbf{x}_m(s)))\bigg\} ds\geqslant\frac 23\mu(t-\tau).
\end{equation}
We also have that, after cutting down on $T>0$,
\begin{equation}\label{eq:app41}
	\begin{split}
	&\,
	\langle 
	H_p(\mathbf{x}_m(s),\bar{p})-H_p(\bar{x},\bar{p}),Du_m(\mathbf{x}_m(s))-\bar{p}
	\rangle \\
	\geqslant&\,-(C_1+|\bar{p}|)\cdot C_2|\mathbf{x}_m(s)-\bar{x}|\\
	\geqslant&\,-\varepsilon C_2(C_1+|\bar{p}|)
	\end{split}
\end{equation}
So, by \eqref{eq:app1}, \eqref{eq:app2}, \eqref{eq:app3} and \eqref{eq:app41} we conclude that
\begin{align*}
	&\,u_m(\mathbf{x}_m(t))-
	\langle \bar p,\mathbf{x}_m(t)\rangle 
		-u_m(\gamma(t))+
	\langle \bar p,\gamma(t)\rangle 	
			\\
	\geqslant&\,\bigg(\frac 23\mu-\varepsilon(|\bar{v}|+C_2(C_1+|\bar{p}|))\bigg)(t-\tau)-\delta\tau(C_1+|\bar{p}|).
\end{align*}
On the other hand,
\begin{align*}
	|u_m(\mathbf{x}_m(t))-
	\langle \bar p,\mathbf{x}_m(t)\rangle 
		-u_m(\gamma(t))+
	\langle \bar p,\gamma(t)\rangle |\leqslant(C_1+|\bar{p}|)|\mathbf{x}_m(t)-\gamma(t)|.
\end{align*}
Therefore,
\begin{equation}
	|\mathbf{x}_m(t)-\gamma(t)|\geqslant\frac{2\mu/3-\varepsilon(|\bar{v}|+C_2(C_1+|\bar{p}|))}{C_1+|\bar{p}|}(t-\tau)-\delta\tau.
\end{equation}
We now take $0\leqslant\varepsilon(|\bar{v}|+C_2(C_1+|\bar{p}|))|<\frac{\mu}3$ to obtain
\begin{align*}
	|\mathbf{x}_m(t)-\gamma(t)|\geqslant\frac{\mu(t-\tau)}{3(C_1+|\bar{p}|)}-\delta\tau
\end{align*}
and look for $t<T$ such that
\begin{equation}\label{eq:app4}
	\frac{\mu(t-\tau)}{3(C_1+|\bar{p}|)}-\delta\tau\geqslant2\delta t.
\end{equation}
So, taking $0<\delta\leqslant\frac{\mu}{12(C_1+|\bar{p}|)}$, we have that \eqref{eq:app4} is satisfied for all 
$$t\geqslant\frac{2(\delta+\mu)}{\mu}\tau=2(1+\frac{\delta}{\mu})\tau.
$$
Finally, $\frac{\delta}{\mu}\leqslant\frac{1}{12(C_1+|\bar{p}|)}$ with $C_1\geqslant 1$ gives that \eqref{eq:app4} holds for all $t\in[3\tau,T]$.
\end{proof}
To complete the proof it suffices to note that Lemma \ref{lem1_app} and Lemma \ref{lem2_app} ensure that assuming \eqref{eq:ast2} leads to a contradiction. Indeed,
\begin{align*}
	|\mathbf{x}_m(t)-\gamma(t)|\geqslant2\delta t,\quad\forall t\in[3\tau_j,T], \forall j\gg1
\end{align*}
implies that $\mathbf{x}(t)\not\in K_{\delta}$ for all $t\in[3\tau_j,T]$. On the other hand, $\mathbf{x}(\tau_i)\in K_{\delta}$ for $i\gg1$ and, for any fixed $i$, $\tau_i\in[3\tau_j,T]$ for $j$ sufficiently large. 
\end{proof}

\begin{center}
\sc Statement
\end{center}
On behalf of all authors, the corresponding author states that there is no conflict of interest.

\bibliographystyle{abbrv}
\bibliography{mybib}

\end{document}